\title
[On the automorphism group of a Conway 99-graph]{On the automorphism group of a putative Conway 99-graph}
\author[\initial{P.} \middlename{G.} Cesarz]{\firstname{Patrick} \middlename{G.} \lastname{Cesarz}}
\address{Dept. of Mathematics and Statistics\\
University of Wyoming\\ 
Laramie\\
WY 82071 (USA)}
\email{pcesarz@uwyo.edu}
\author[\initial{A.} \middlename{J.} Woldar]{\firstname{Andrew} \middlename{J.} \lastname{Woldar}}
\address{Dept. of Mathematics and Statistics\\
Villanova University\\ 
Villanova\\
PA  19085 (USA)}
\email{andrew.woldar@villanova.edu}
\thanks{}
\keywords{Conway $99$-graph, strongly regular graph, automorphism group, orbit partition, orbit valencies}
\subjclass{05E18, 05C25, 58D19}
\begin{document}

\begin{abstract}
Let $\Gamma$ be a  {Conway 99-graph}, that is, a strongly regular graph   
 with parameters 
$(99,14,1,2)$.  In  
Makhnev and Minakova  (On automorphisms of strongly regular graphs with parameters $\lambda =1$, $\mu= 2$,  
Discrete Math.\  Appl.\ 
14 (2)  (2004) 201-210), 
 the authors prove  that   
the automorphism group 
$G$ of $\Gamma$ must have order dividing $2\cdot 3^3\cdot 7\cdot 11$. 
They further show that if $|G|$ is divisible by $2$ then $|G|$ must  divide $42$.   
In the present paper, we refine these results by proving that divisibility by $7$ implies  
$G \cong\mathbb Z_7$. As a consequence, divisibility by $2$ implies $|G|$ divides $6$, \ie $G$ is isomorphic to one of $\mathbb Z_2, \mathbb Z_6, S_3$.
\end{abstract}

\maketitle

\section{Introduction} 

The question of existence  of a strongly regular graph with parameters
$(99,14, 1,2)$ is a longstanding open problem. Its possible existence was first suggested by Norman Biggs  in 1969  \cite{Biggs}.  According to the  account given by Richard Guy in \cite{Guy}, John H.\ Conway worked on this problem as early as 1975. Later, Conway would offer a \$1,000 prize to anyone who could solve it (see \cite{Conway}, where it is listed as problem 2 among five posed open problems). From that point on, the graph came to be known colloquially as a \emph{Conway 
99-graph}.

In  \cite{Conway} Conway gave an alternate formulation of the existence problem, which we here reproduce: 
\begin{quote}
{Is there a graph with 99 vertices in which every edge (\ie pair of joined vertices)
belongs to a unique triangle and every nonedge (pair of unjoined vertices) to a unique
quadrilateral?}
\end{quote}
 
 Famously, A.A.\ Makhnev and I.M.\ Minakova proved in  \cite{Makhnev} that a
 strongly regular graph with parameters $(v, k, 1, 2)$ can only exist if 
 $k = u^2 + u + 2$ where $u\in\{1, 3, 4, 10, 31\}$.  Such graphs are known to exist for $u\in \{1,4\}$ but not much is known about the remaining cases. Should a Conway 99-graph exist, it would correspond to the case $u=3$.

The survey paper \cite{Makhnev2} by Makhnev neatly establishes a context for our work.  Let $G$ be the automorphism group of a putative Conway 99-graph. 
In \cite[Theorem 2.7]{Makhnev} it is stated that the order of $G$  must divide   $2\cdot 3^3\cdot 7\cdot 11$.  In  \cite[Corollary 2.4]{Makhnev2} Makhnev asserts the following: 

 If $G$ contains an involution $t$, then one of the following holds: 
  \begin{enumerate}
  \item
  If $|G|$ is divided by 7 then $|G|$ divides 42, $[O(G), t] = 1$, and in the case $|G| = 42$ the subgroup $O(G)$ is non-Abelian, 
\item $|G|$ divides 6.
  \end{enumerate}       
In the present paper, we strengthen these results as follows:

 \begin{enumerate}
 \setlength{\itemindent}
 {.4em}
 \item[($1^\prime$)]
  If $|G|$ is divisible by 7, then $G$ is isomorphic to  $\mathbb Z_7$.
     
  \item[($2^\prime$)]   
   If 2 divides $|G|$, then
 $|G|$ divides 6. 
\end{enumerate}

Our proof of ($1^\prime$) is performed in three stages, the first of which is to prove 14 cannot divide the order of $G$.  At the second stage, we show that divisibility by 7 implies
$G$ is isomorphic to $\mathbb Z_7$ or $Frob(21)$. Here $Frob(21)$ denotes the Frobenius group of order 21, \ie the index 2 subgroup of the holomorph $\mathbb Z_7\rtimes {\rm Aut}(\mathbb Z_7)\cong Frob(42)$.  The third stage is devoted to eliminating $Frob(21)$ as a possibility, and here we find it necessary to enlist the aid of a computer.  It is worthy of note that stage 3 requires garnering as much structural information  as possible in order to make a computer search feasible.

Our notation and terminology are standard. We refer the reader to 
\cite{Biggs1,BCN,Godsil,Rotman} as excellent sources of background material.

Our paper is organized as follows.  In Section \ref{prelims} we establish terminology and notation to be used throughout the paper.  Especially relevant to later sections is a labeling scheme we provide for the vertices of a putative Conway 99-graph $\Gamma$ under the assumption that 7 divides $|G|$.  This scheme allows us to embed the automorphism group $G$ of $\Gamma$ into the symmetric group of degree 14.  

In Section \ref{no order 14 auts} we prove there are no order 14 automorphisms of $\Gamma$.   In Section \ref{cyclic or Frobenius} we complete the proof of  ($2^\prime$) and show that divisibility by 7 implies $G\cong \mathbb Z_7$ or $Frob(21)$. 
Orbit valencies are determined in the case when
$G\cong Frob(21)$. 

In Section \ref{several results} we derive a fairly comprehensive structural framework for $\Gamma$ based on the assumption that $G\cong Frob(21)$. 
This framework is crucial in reducing run-time, thereby making a computer search feasible. Results of this search show that $G\cong\mathbb Z_7$  if $7$ divides $|G|$. Details of our program are provided  in Section \ref{computer}.  
  
\section{Preliminaries}\label{prelims}
Throughout this paper, $\Gamma$ will denote a putative Conway 99-graph, \ie a strongly regular graph with parameters $(99,14,1,2)$.  We denote its automorphism group  by $G$.  Although $\Gamma$ is not vertex transitive, one can always ``hang'' the graph from any vertex $x\in V(\Gamma)$ whereby vertices are grouped together in accordance with their distance from $x$. 
This is a property of  distance-regular graphs in general and strongly regular graphs in particular. In such case, we refer to $x$ as the ``root vertex''. 

As is customary, we denote by $\Gamma_1\,(x)$ and $\Gamma_2\,(x)$ the set of neighbors and non-neighbors of $x$ respectively, commonly referred to as the first and second subconstituents of $\Gamma$.  When $x$ is understood from context, we will abbreviate these sets by $\Gamma_1$ and $\Gamma_2$. In Figure \ref{DDD} we depict the distance distribution diagram for $\Gamma$. Note that the diagram  indicates that $|\Gamma_1|=14$ and $|\Gamma_2|=84$.

For the moment we assume $x\in V(\Gamma)$ is an arbitrary vertex, however in due course our choice of $x$ will carry special significance.  
We label the 14 neighbors of $x$ by ${iX}$, $1\le i\le 7$, $X\in \{L,R\}$, see Figure \ref{Gamma1}.  Since $\lambda=1$, we may assume without loss of generality that 
$\{iL,iR\}$ is an edge for every $1\le i\le 7$.

\begin{figure}
\begin{tikzpicture}
\draw (0,-1) circle (8pt);
\draw (2.5,-1) circle (8pt);
\draw (5,-1) circle (8pt); 
\draw (0,-1)--(2.5,-1)--(5,-1);
\draw[white, fill] (0,-1) circle (7pt); 
\draw[white, fill] (2.5,-1) circle (7pt); 
\draw[white, fill] (5,-1) circle (7pt); 
\node at (0,-1) {1} node at (.55,-.83) {\footnotesize 14};
\node at (2.5,-1) {14} node at (2.03,-.83) {\footnotesize 1} node at (3.05,-.83) {\footnotesize 12} node at (2.5,-.53) {\footnotesize 1};
\node at (5,-1) {84} node at (4.55,-.83) {\footnotesize 2} node at (5,-.53) {\footnotesize 12};

\end{tikzpicture}
\caption{Distance distribution diagram of $\Gamma$}\label{DDD}
\end{figure}
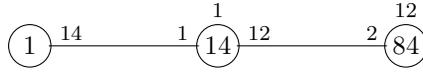

\begin{center}
\begin{figure}
\begin{tikzpicture}[scale=.9]

\draw [fill] (0,0) circle (.06) node [above] {$x$};

\draw [fill] (-.5,-2) circle (.06) node at (-.5,-2.3) {${4L}$};
\draw [fill] (.5,-2) circle (.06) node at (.5,-2.3) {${4R}$};

\draw [fill] (-1.5,-2) circle (.06) node at (-1.5,-2.3) {${3R}$};;
\draw [fill] (1.5,-2) circle (.06) node at (1.5,-2.3) {${5L}$};;
\draw [fill] (-2.5,-2) circle (.06) node at (-2.5,-2.3) {${3L}$};;
\draw [fill] (2.5,-2) circle (.06)  node at (2.5,-2.3) {${5R}$};;
\draw [fill] (-3.5,-2) circle (.06) node at (-3.5,-2.3) {${2R}$};;
\draw [fill] (3.5,-2) circle (.06) node at (3.5,-2.3) {${6L}$};;
\draw [fill] (-4.5,-2) circle (.06) node at (-4.5,-2.3) {${2L}$};;
\draw [fill] (4.5,-2) circle (.06) node at (4.5,-2.3) {${6R}$};;
\draw [fill] (-5.5,-2) circle (.06) node at (-5.5,-2.3) {${1R}$};;
\draw [fill] (5.5,-2) circle (.06) node at (5.5,-2.3) {${7L}$};;
\draw [fill] (-6.5,-2) circle (.06) node at (-6.5,-2.3) {${1L}$};;
\draw [fill] (6.5,-2) circle (.06) node at (6.5,-2.3) {${7R}$};;

\draw (-.5,-2) -- (.5,-2);
\draw (-1.5,-2) -- (-2.5,-2);
\draw (-3.5,-2) -- (-4.5,-2);
\draw (-5.5,-2) -- (-6.5,-2);
\draw (1.5,-2) -- (2.5,-2);
\draw (3.5,-2) -- (4.5,-2);
\draw (5.5,-2) -- (6.5,-2);

\draw (0,0) -- (.5,-2);
\draw (0,0) -- (-2.5,-2);
\draw (0,0) -- (-4.5,-2);
\draw (0,0) -- (-6.5,-2);
\draw (0,0) -- (2.5,-2);
\draw (0,0) -- (4.5,-2);
\draw (0,0) -- (6.5,-2);

\draw (0,0) -- (-.5,-2);
\draw (-1.5,-2) -- (0,0);
\draw (-3.5,-2) -- (0,0);
\draw (-5.5,-2) -- (0,0);
\draw (1.5,-2) -- (0,0);
\draw (3.5,-2) -- (0,0);
\draw (5.5,-2) -- (0,0);

\end{tikzpicture}
\caption{A labeling scheme for $V(\Gamma_1)$}\label{Gamma1}
\end{figure}
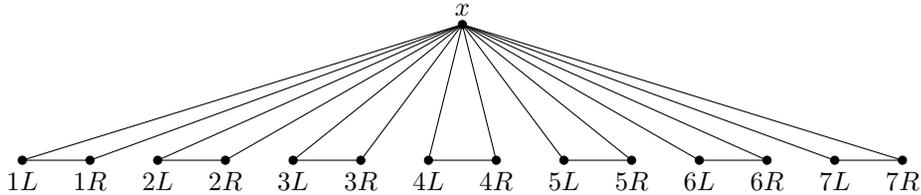
\end{center}

Observe that for any $X,Y\in \{L,R\}$, the vertices ${iX},{jY}\in V(\Gamma_1)$ are nonadjacent if and only if
$i\ne j$.  
As $\mu=2$, each pair of nonadjacent vertices ${iX},{jY}\in  V(\Gamma_1)$ must have a unique common neighbor in $V(\Gamma_2)$. We label this common neighbor  $ijXY$.  As there are $\binom{14}{2} -7=84$ pairs of nonadjacent vertices in $V(\Gamma_1)$, we see that each of the 84 vertices in $V(\Gamma_2)$ receives a unique label, again due to $\mu=2$. 
 
\begin{lemm}\label{identity aut}
Suppose $g \in G$ fixes the subgraph $\Gamma_1+x$ pointwise. Then $g$ fixes 
$\Gamma_2$ pointwise, i.e, $g$ is the identity automorphism.
\end{lemm}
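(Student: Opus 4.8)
The plan is to exploit the canonical labeling of $\Gamma_2$ developed just above, under which each vertex of the second subconstituent is completely determined by a pair of vertices in $\Gamma_1$ together with the root $x$. Since $g$ is assumed to fix $x$ and every vertex $iX \in V(\Gamma_1)$, I would argue that $g$ has no freedom left on $\Gamma_2$: each labeled vertex $ijXY$ is forced to be a fixed point, one pair at a time.

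The key step is the following. Fix an arbitrary $w \in \Gamma_2$ and write $w = ijXY$ according to our scheme, where necessarily $i \ne j$. By construction $w$ is the common neighbor of the nonadjacent pair $\{iX, jY\}$ lying in $V(\Gamma_2)$; more precisely, because $\mu = 2$ this pair has exactly two common neighbors in $\Gamma$, namely $x$ and $w$. Now any automorphism carries the set of common neighbors of a pair onto the set of common neighbors of its image pair. Since $g$ fixes both $iX$ and $jY$, it maps $\{x, w\}$ bijectively onto itself; and since $g(x) = x$, it follows that $g(w) = w$.

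As $w$ was arbitrary, $g$ fixes $\Gamma_2$ pointwise. Combined with the hypothesis that $g$ fixes $\Gamma_1 + x$ pointwise, this shows $g$ fixes all $99$ vertices, whence $g$ is the identity automorphism. I do not anticipate a genuine obstacle here: the entire content resides in the labeling scheme, and the only point requiring care is the invocation of $\mu = 2$ to guarantee that $\{x, w\}$ is \emph{precisely} the set of common neighbors of $\{iX, jY\}$, so that fixing $x$ leaves $g$ no choice but to fix $w$.
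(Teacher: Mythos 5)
Your argument is correct and is exactly the reasoning the paper intends: it cites the labeling scheme together with $\mu=2$, under which each $ijXY\in V(\Gamma_2)$ is the unique common neighbor besides $x$ of the fixed pair $\{iX,jY\}$, forcing $g$ to fix it. The paper merely declares this "evident," so your write-up is the same proof spelled out in full.
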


\begin{proof}
Evident from our labeling scheme  
and the fact that $\mu=2$.
\end{proof}

\begin{lemm}\label{7-orbits}
Suppose there exists $s\in G$ with $|s| = 7$.  Then 
$s$ fixes a unique vertex in $V(\Gamma)$, hence the $\langle s\rangle$-orbit structure 
  on $\Gamma$ is $[1,7^{14}]$.   
\end{lemm}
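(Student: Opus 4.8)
The plan is to combine a simple divisibility count with the local structure of $\Gamma_1(x)$ and the constraint $\mu=2$. First I would record that since $\langle s\rangle$ has prime order $7$, every $\langle s\rangle$-orbit on $V(\Gamma)$ has size $1$ or $7$. Writing $f$ for the number of fixed vertices, the non-fixed vertices partition into $7$-orbits, so $7\mid(99-f)$ and hence $f\equiv 99\equiv 1\pmod 7$. In particular $f\ge 1$, so the entire content of the lemma reduces to proving the upper bound $f\le 1$.

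Next I would analyze a single fixed vertex $x$. Because $\lambda=1$, the subgraph induced on $\Gamma_1(x)$ is a perfect matching of $7$ edges (precisely the pairing $\{iL,iR\}$ of our labeling scheme). The automorphism $s$ permutes these $7$ edges in orbits of size $1$ or $7$. Since $|s|=7$ is odd, $s$ cannot transpose the two endpoints of a fixed edge; thus a fixed edge contributes two fixed vertices while a $7$-orbit of edges contributes none, and so the number of fixed neighbors of $x$ is either $0$ (all seven edges lying in a single $7$-orbit) or $14$ (all seven edges fixed). If it were $14$, then $s$ would fix $\Gamma_1+x$ pointwise, forcing $s$ to be the identity by Lemma~\ref{identity aut}, contrary to $|s|=7$. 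Hence \emph{every} fixed vertex has no fixed neighbor.

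From here the conclusion follows quickly. The previous step shows that no two fixed vertices can be adjacent, so the fixed set $F$ is a coclique. Suppose $x,y\in F$ were distinct; being nonadjacent, they have exactly $\mu=2$ common neighbors, and $s$ permutes this $2$-element set. A permutation of a $2$-set has order $1$ or $2$, but $2\nmid 7$, so $s$ must fix both common neighbors. These, however, are neighbors of the fixed vertex $x$, contradicting the fact that $x$ has no fixed neighbor. Therefore $|F|\le 1$, and combined with $f\ge 1$ we obtain $f=1$. Deleting the unique fixed vertex leaves $98=7\cdot 14$ vertices partitioned into $7$-orbits, giving the orbit structure $[1,7^{14}]$.

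As for the main obstacle: the congruence alone yields only $f\equiv 1\pmod 7$, which a priori permits $f=8,15,\dots$, so the real difficulty is extracting the bound $f\le 1$. The decisive interplay is between the rigidity of the matching on $\Gamma_1(x)$ together with Lemma~\ref{identity aut} (which rules out fixed neighbors) and the $\mu=2$ condition applied to pairs of fixed vertices. I expect the matching/parity step to require the most care, since that is where the oddness of $|s|$ and the $\lambda=1$ structure must be used jointly.
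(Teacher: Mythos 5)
Your proposal is correct and follows essentially the same three-step route as the paper: the congruence $99\equiv 1\pmod 7$ for existence of a fixed vertex, the parity of the matching on $\Gamma_1(x)$ together with Lemma~\ref{identity aut} to rule out fixed neighbors, and the $\mu=2$ condition to exclude a second fixed vertex. The only (cosmetic) difference is in the last step: the paper locates a putative second fixed vertex as some $ijXY\in\Gamma_2(x)$ and derives a contradiction via $s^2$, whereas you treat the fixed set as a coclique and apply $\gcd(7,2)=1$ to the two common neighbors, which is a slightly cleaner packaging of the same idea.
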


\begin{proof}
As $|V(\Gamma)| = 99 \equiv 1 \!\pmod 7$ we deduce that $s$ must have at least one fixed vertex which we may choose to fulfill the role  of root vertex $x$ of $\Gamma$. This establishes that both  
$\Gamma_1$ and $\Gamma_2$ are $s$-invariant. In particular, the orbit structure of $\langle s\rangle$ on $\Gamma_1$ is $[7^2]$, $[1^7,7]$, or $[1^{14}]$.   
As $iL$ is a fixed point of $s$ if and only if $iR$ is, the number of fixed points must be even, \ie the $\langle s\rangle$-orbit structure  is either 
$[7^2]$ or $[1^{14}]$. However, Lemma \ref{identity aut} rules out $[1^{14}]$, leaving  $[7^2]$ as 
the orbit structure on $\Gamma_1$. Now suppose a vertex $ijXY\in V(\Gamma_2)$ is fixed by $s$.  Then $x$ and $ijXY$ have  ${iX},{jY},{iX}^s, {jY}^s$ as common neighbors.  As $i\ne j$ and $\mu =2$,  it follows that $\{{iX},{jY}\}=\{{iX}^s, {jY}^s\}$. Since $s$ has no fixed points on $\Gamma_1$, this implies ${iX}^s={jY}$ and ${jY}^s=iX$. But then
each of $iX$ and $jY$ is fixed by $s^2$, a contradiction.
  The result follows.  
\end{proof}

\begin{rema}\label{|s|=7} 
Note that one consequence of our labeling scheme, together with the assumption in Lemma \ref{7-orbits},  
is that the automorphism group ${\rm Aut}(\Gamma)$ embeds in ${\rm Sym}(V(\Gamma_1))\cong S_{14}$.  However, among all order $7$ elements in 
${\rm Sym}(V(\Gamma_1))$, the only ones that are graph automorphisms of $\Gamma_1$ are powers of $(1L,2L,\dots,7L)\,(1R,2R,\dots,7R)$.
Henceforth we denote $s_L=(1L,2L,\dots,7L)$, $s_R=(1R,2R,\dots,7R)$ and $s=s_Ls_R$. 
The two $\langle s\rangle$-orbits on $\Gamma_1$ are now transparent. They are 
 $\{1L,2L,\dots,7L\}$ and $\{1R,2R,\dots,7R\}$. 
\end{rema}

\begin{lemm}\label{140 triangles}  
Every vertex of $\,\Gamma_2$ lies on five $3$-cycles wholly inside $\Gamma_2$. Thus 
there are  $140$ $3$-cycles in $\Gamma_2$.
\end{lemm}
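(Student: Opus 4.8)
The plan is to prove the per-vertex statement first and then obtain the total count by a routine double count. The governing observation is that, because $\lambda = 1$, the neighborhood of any vertex $v$ induces a graph in which every vertex has degree exactly $1$: each neighbor $w$ of $v$ has a unique common neighbor with $v$, and that common neighbor is precisely the one neighbor of $v$ that is adjacent to $w$. Hence the $14$ neighbors of $v$ are partitioned into a perfect matching of $7$ edges, so every vertex of $\Gamma$ lies on exactly $7$ triangles, with each such triangle corresponding to one matched pair of neighbors of $v$.

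Now fix $v\in\Gamma_2$. First I would locate the neighbors of $v$ using the distance distribution diagram (Figure~\ref{DDD}): $v$ has exactly two neighbors in $\Gamma_1$ and twelve in $\Gamma_2$, and $v\not\sim x$ since $v\in\Gamma_2$. Writing $v=ijXY$ in our labeling scheme, its two $\Gamma_1$-neighbors are precisely $iX$ and $jY$ with $i\ne j$, so $iX\not\sim jY$; thus $iX$ and $jY$ lie in \emph{distinct} matched pairs. Next I would show that the match of $iX$ lies in $\Gamma_2$: the unique common neighbor $c$ of the adjacent pair $v,iX$ cannot be $x$ (as $x\not\sim v$), nor can it be the partner $iX'$ of $iX$, since the only $\Gamma_1$-neighbors of $v$ are $iX$ and $jY$ and $iX'\notin\{iX,jY\}$ because $i\ne j$. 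Therefore $c\in\Gamma_2$, and the same argument applies to $jY$. Consequently exactly two of the seven triangles through $v$ meet $\Gamma_1$ — namely $\{v,iX,c\}$ and the analogous triangle through $jY$ — while the remaining five match $\Gamma_2$-neighbors of $v$ to one another and so lie wholly inside $\Gamma_2$. This establishes the value $5$.

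Finally, I would finish by double counting incident (vertex, triangle) pairs: summing $5$ over the $84$ vertices of $\Gamma_2$ counts each triangle inside $\Gamma_2$ exactly three times, so the number of such triangles is $84\cdot 5/3 = 140$.

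I do not anticipate a serious obstacle; the only point requiring genuine care is the verification that the two $\Gamma_1$-neighbors of $v$ are matched to $\Gamma_2$-vertices rather than to each other or to $x$, and this follows cleanly from the labeling scheme together with $\mu=2$ and $i\ne j$. As a sanity check, one could instead derive $140$ by a purely global count: $\Gamma$ has $99\cdot 7/3 = 231$ triangles in all, of which $7$ contain $x$, none lie inside $\Gamma_1$ (whose edges form a matching), and exactly $14\cdot 6 = 84$ have a single vertex in $\Gamma_1$, leaving $231-7-84 = 140$ triangles entirely within $\Gamma_2$.
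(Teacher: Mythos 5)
Your proof is correct and follows essentially the same route as the paper's: seven triangles per vertex via the $\lambda=1$ matching on the neighborhood, exactly two of them meeting the two $\Gamma_1$-neighbors of $v\in\Gamma_2$, and the division by $3$ to get $140$. The only cosmetic difference is that you rule out the coincidence of those two triangles via the labeling ($i\ne j$ forces $iX\not\sim jY$), whereas the paper notes that adjacency of the two $\Gamma_1$-neighbors would put a second triangle (through $x$) on that edge; both are valid.
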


\begin{proof}
Clearly, every vertex in $\Gamma$ lies on seven $3$-cycles. Given a vertex $v\in V(\Gamma_2)$ it has precisely two $\Gamma_1$-neighbors $u$ and $w$.  As 
$\lambda=1$, each of $vu$ and $vw$ must be an edge in a unique 3-cycle. Moreover, these two 3-cycles cannot coincide. Indeed, this would require that $u$ and $w$ be adjacent, whence $uwx$ would be a second 3-cycle on the edge $uw$ where $x$ is the root vertex.  This proves the remaining five $3$-cycles on $v$ lie entirely inside $\Gamma_2$. But now we have that the total number of $3$-cycles in $\Gamma_2$ is $\frac{84\cdot 5}{3}=140$ as claimed.
\end{proof}

\section{Nonexistence of an order 14 automorphism}\label{no order 14 auts}
 
Our goal in this section is to 
prove $G$ does not contain any element of order 14. This will be a crucial step in our argument  that  divisibility by 2 implies $|G|$  divides $6$.  

Lemmas \ref{order 14 element} and \ref{orbits} set the groundwork for the rest of this section.   

\begin{lemm}\label{order 14 element}
Suppose $G$ contains a cyclic subgroup $K$ of order $14$. 
Then 
$K=\langle st\rangle$ where $s=(1L,2L,\dots,7L)\,(1R,2R,\dots,7R)$ and $t=(1L,1R)\,(2L,2R)\,\dots\,(7L,7R)$. 
\end{lemm}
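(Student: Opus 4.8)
The plan is to take a generator $w$ of $K$, so that $w^2$ has order $7$ and $w^7$ is the unique involution of $K$, and to pin down the action of $w$ on the first subconstituent. First I would apply Lemma \ref{7-orbits} to $w^2$: it fixes a unique vertex, which I take to be the root $x$. Since $K$ is abelian, $w$ maps fixed points of $w^2$ to fixed points of $w^2$ (because $w^2(w(y))=w(w^2(y))$), and as this fixed set is the singleton $\{x\}$ the element $w$ fixes $x$ as well. Hence $K$ stabilizes $x$, so it acts on $\Gamma_1$, and by Lemma \ref{identity aut} this action is faithful; thus I may regard $w$ as an order-$14$ automorphism of $\Gamma_1$ (a disjoint union of the seven edges $\{iL,iR\}$), with $w^2$ an order-$7$ automorphism. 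By Remark \ref{|s|=7}, $w^2$ equals a power $s^k$ with $\gcd(k,7)=1$, so $\langle w^2\rangle=\langle s\rangle$; in particular $s$ itself already lies in $K$.

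The heart of the argument is to determine how $w$ interacts with the two $\langle s\rangle$-orbits $O_L=\{1L,\dots,7L\}$ and $O_R=\{1R,\dots,7R\}$. Since $w$ commutes with $w^2$, it permutes these two orbits, so it either fixes both setwise or interchanges them. I would rule out the first possibility: if $w$ fixes $O_L$ and $O_R$, then $w|_{O_L}$ is a square root of the $7$-cycle $s_L^{\,k}$ in $\mathrm{Sym}(O_L)$, and such a square root is unique and lies in $\langle s_L\rangle$ (the centralizer of a $7$-cycle in $S_7$ is the cyclic group it generates); likewise $w|_{O_R}\in\langle s_R\rangle$. Matching exponents forces $w|_{\Gamma_1}=s^{4k}\in\langle s\rangle$, making $w$ have order $7$, contradicting $|w|=14$. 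Hence $w$ must interchange $O_L$ and $O_R$.

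In that remaining case I would use edge-preservation: because $w$ carries each matching edge $\{iL,iR\}$ to a matching edge while swapping sides, there is a bijection $\rho$ with $w\colon iL\mapsto\rho(i)R$ and $iR\mapsto\rho(i)L$. Reading off $w^2=s^k$ gives $\rho^2$ equal to the index shift by $k$, whence $\rho$ is the shift by $4k$. A direct computation $w^7=s^{3k}\circ w$ then collapses the shift, since $4k+3k\equiv0\pmod 7$, and yields $w^7\colon iL\mapsto iR$, i.e.\ $w^7=t$. Therefore $t\in K$ as well.

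To conclude, I would note that $s$ and $t$ now both lie in the cyclic group $K\cong\mathbb Z_{14}$, with $s$ generating its Sylow $7$-subgroup and $t=w^7$ its unique Sylow $2$-subgroup; hence the product $st$ has order $14$ and generates $K$, giving $K=\langle st\rangle$ with $s$ and $t$ of exactly the stated cycle type. I expect the main obstacle to be the orbit-interchange step: ruling out the case where $w$ preserves $O_L$ and $O_R$ (precisely where faithfulness on $\Gamma_1$, via Lemma \ref{identity aut}, is indispensable) and then extracting the exact permutation $\rho$ from edge-preservation so that the clean identity $w^7=t$ falls out independently of the ambiguity in $k$.
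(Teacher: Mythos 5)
Your proof is correct and rests on the same mechanism as the paper's: after reducing to a faithful action on the matching $\Gamma_1$ and identifying the Sylow $7$-part of $K$ with $\langle s\rangle$ via Remark \ref{|s|=7}, everything is pinned down by the fact that the relevant permutations centralize a $7$-cycle and must preserve the edges $\{iL,iR\}$. The only cosmetic difference is that you work with the order-$14$ generator $w$ and compute $w^7=t$ directly (also taking care to verify that $K$ fixes $x$), whereas the paper enumerates the seven involutions $t^{\,s_R^{\,i}}$ in $C_S(s)$ and checks that only $i=0$ preserves adjacency.
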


\begin{proof} Let $C_S(s)=(\langle s_L\rangle\times \langle s_R\rangle)\rtimes\langle t
\rangle$ denote the centralizer of $s$ in $S={\rm Sym}(V(\Gamma_1))$.   There are seven involutions in $C_S(s)$ which take the form 
\[t^{\,s_{R}^{\,i}}=(1L,(1+i)R)\,(2L,(2+i)R)\dots(7L,iR)\]  
for $0\le i\le 6$, and in each case $\langle st^{\,s_{R}^i}\rangle$ is a cyclic subgroup of $C_S(s)$ of order 14.     
However,  since $t^{\,s_{R}^{\,i}}$ maps $1L$ to $(1+i)R$ and $1R$ to $(1-i)L$, adjacency is preserved only if $i=0$.   
  Thus $t$ is the unique involution in ${\rm Aut}(\Gamma_1)$ whereby   
 $K=\langle st\rangle$ is the unique cyclic subgroup  of order 14  in ${\rm Aut}(\Gamma_1)$.  
\end{proof}

\begin{lemm}\label{orbits}
$K =\langle st \rangle$ fixes the root vertex $x$ of $\,\Gamma$ but has no other fixed points. Thus each of the remaining seven $K$-orbits on $\,\Gamma$ has size $14$.  
\end{lemm}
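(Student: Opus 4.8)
The plan is to exploit the product structure $K=\langle s\rangle\times\langle t\rangle$. Since $t$ conjugates the cycle $s_L$ to $s_R$ and $s_R$ to $s_L$, we have $tst=s_Rs_L=s$, so $s$ and $t$ commute; as $|s|=7$ and $|t|=2$ are coprime, $K\cong\mathbb Z_{14}$, its unique involution is $t=(st)^7$, and its unique subgroup of order $7$ is $\langle s\rangle=\langle (st)^2\rangle$. In particular a vertex is fixed by $K$ if and only if it is fixed by both $s$ and $t$, so I will reduce everything to the separate fixed-point behaviour of $s$ and of $t$.

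First I would establish that $K$ fixes $x$. By Lemma \ref{7-orbits}, $s$ fixes only $x$. Because $t$ commutes with $s$, it permutes the fixed-point set of $s$; as that set is the singleton $\{x\}$, we get $t(x)=x$, and hence $K=\langle s,t\rangle$ fixes $x$. Conversely any $K$-fixed vertex is fixed by $s$ and therefore equals $x$, which already gives the first assertion that $x$ is the unique fixed point of $K$.

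For the orbit sizes I would rule out the divisors $2$ and $7$ of $|K|=14$. A nontrivial orbit of length $2$ has stabilizer $\langle s\rangle$, so consists of $s$-fixed vertices, which is excluded at once by Lemma \ref{7-orbits}. An orbit of length $7$ has stabilizer $\langle t\rangle$, so consists of $t$-fixed vertices; thus the remaining task is to show $t$ fixes no vertex other than $x$. On $\Gamma_1$ this is clear since $t$ interchanges $iL$ and $iR$. On $\Gamma_2$ I would use the labeling scheme: writing $\overline L=R$ and $\overline R=L$, the automorphism $t$ sends $iX$ to $i\overline X$, hence maps the common neighbor $ijXY$ of $iX,jY$ to the common neighbor $ij\,\overline X\,\overline Y$ of $i\overline X,j\overline Y$. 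A fixed vertex would force the unordered pairs to agree, $\{iX,jY\}=\{i\overline X,j\overline Y\}$; matching $iX$ with $i\overline X$ requires $X=\overline X$, while matching $iX$ with $j\overline Y$ requires $i=j$, and both are impossible since $X\ne\overline X$ and $i\ne j$. Therefore $t$ is fixed-point-free on $\Gamma_2$, no orbit of length $7$ can occur, and every orbit other than $\{x\}$ has length $14$. Counting $99-1=98=7\cdot 14$ then yields exactly seven orbits of size $14$.

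The only mildly technical point is the computation of the action of $t$ on the second subconstituent and the verification that it fixes no label there; everything else is immediate from the commuting decomposition of $K$ together with Lemma \ref{7-orbits}. I expect this $\Gamma_2$-bookkeeping, in particular keeping track of the symmetry $ijXY=jiYX$ of the labels, to be the main (though still routine) obstacle.
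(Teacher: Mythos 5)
Your proposal is correct and follows essentially the same route as the paper: both arguments deduce $x^t=x$ from the fact that $t$ commutes with $s$ and hence permutes the singleton fixed-point set of $s$, and both reduce the orbit-size claim to showing $t$ is fixed-point-free on $\Gamma_2$ via the computation $(ijXY)^t=ijX^CY^C$ and the observation that a fixed label would force either $X=X^C$ or $i=j$. Your repackaging of the orbit-size step in terms of the possible stabilizers in $\mathbb Z_{14}$ is only a cosmetic variation on the paper's remark that $\langle s\rangle$-orbits on $\Gamma_2$ have size $7$, so $K$-orbits have size $7$ or $14$.
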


\begin{proof}
Recall from Lemma \ref{7-orbits} that $x$ is the unique vertex fixed by $s\in K$. As $t$ commutes with $s$
we have that $x^t$ is fixed by $s$, whence $x^t=x$. Thus $x$ is the unique vertex fixed by $K$. 

We now consider $K$-orbits on $\Gamma -x$. 
 Clearly, $K$ fuses the two $\langle s\rangle$-orbits in $\Gamma_1$ so we are left to consider the orbit structure of $K$ on $\Gamma_2$. Since every $\langle s\rangle$-orbit on $\Gamma_2$ has size 7, the only possible size of  a $K$-orbit on $\Gamma_2$ is 7 or 14. However, $t$ cannot fix any vertex $ijXY\in V(\Gamma_2)$ where $X,Y\in\{L,R\}$.  Indeed, this would  imply $ijXY=(ijXY)^t = ijX^CY^C$ where $\{L,R\}=\{X,X^C\}=\{Y,Y^C\}$. If $X\ne Y$, then $i=j$ which violates $\lambda=1$. Otherwise $X=Y$, which implies   
 $ijXX=(ijXX)^t=ijX^CX^C$, a contradiction since $X\ne X^C$. 
 \end{proof}

\begin{rema}\label{rem: coords}
It is easy to see that a set of orbit representatives in the action of $K$ on $\Gamma_2$ is given by  
$\{12LL,\, 13LL,\,14LL,\, 12LR,\, 13LR,\, 14LR\}$.  Moreover, each numerical coordinate $i$ occurs exactly four times in each orbit. For example, in the orbit with representative $12LL$, the coordinate $3$ occurs in each of $23LL$,  
 $34LL$, $23RR$, $34RR$. However, these four vertices are distributed evenly into pairs in the sense that $23LL$, $34LL$ are neighbors of $3L$ while $23RR$, $34RR$ are neighbors of $3R$. 
\end{rema}

Consider the equitable partition $\pi$ induced by the $K$-orbits $\mathcal O_1,\mathcal O_2,\dots,\mathcal O_6$ on $\Gamma_2$. As is customary, we shall refer to $\pi$ as an \emph{orbit partition}.  We denote by $b_{ij}$ the number of $\mathcal O_j$-neighbors of any fixed vertex in $\mathcal O_i$.  When $i=j$ we simply write $b_i$   and refer to it as the \emph{internal valency} of the orbit $\mathcal O_i$.  We shall also call the $b_{ij}$ \emph{orbit valencies} (or simply \emph{valencies}) due to what occurs naturally in the quotient graph $\Gamma_2/\pi$.  Pictorially, $b_{ij}$ appears as a label of an arc from $\mathcal O_i$ to $O_j$, however in our case this arc is an edge (\ie  $b_{ij}=b_{ji}$ for all $i,j$) since all orbits $\mathcal O_i$ have the same size.

 \begin{figure}
 \resizebox{9cm}{7.5cm}{
\begin{tikzpicture}
\draw[thick] (0,0) circle (6mm); 
\draw [thick](3,0) circle (6mm);
\draw [thick](0,3) circle (6mm)  node at (0,3) {$b_{3}$};
\draw[thick] (3,3) circle (6mm)  node at (3,3) {$b_{4}$};
\draw[thick] (-2.5,1.5) circle (6mm)  node at (-2.5,1.5) {$b_{5}$};
\draw[thick] (5.5,1.5) circle (6mm) node at (5.5,1.5) {$b_{6}$};

\draw (.6,0)--(2.4,0);
\draw (.6,3)--(2.4,3);
\draw (0,.6)--(0,2.4);
\draw (3,.6)--(3,2.4);
\draw (.4,.4)--(2.6,2.6);
\draw (.4,2.6)--(2.6,.4);

\node at (0,0) {$b_{1}$};
\node at (1.5,.238) {$b_{12}$};
\node at (.75,1.2) {$b_{14}$};
\node at (-.278,1.5) {$b_{13}$}; 
\node at (-1.2,.9) {$b_{15}$};
\node at (5,.095) {$b_{16}$}; 

\node at (3,0) {$b_{2}$};
\node at (3.35,1.5) {$b_{24}$}; 
\node at (2.2,1.2) {$b_{23}$};
\node at (-2,.095) {$b_{25}$}; 
\node at (4.2,.9) {$b_{26}$};

\node at (1.5,3.225) {$b_{34}$};
\node at (1.5,5.05) {$b_{56}$};

\node at (-1.95,3.1) {$b_{45}$}; 
\node at (4.95,3.1) {$b_{36}$};

\node at (-1.2,2.1) {$b_{35}$};
\node at (4.2,2.1) {$b_{46}$};

\node at (-.8,-.45) {\Large$\mathcal O_1$};
\node at (3.85,-.45) {\Large$\mathcal O_2$};
\node at (-.82,3.5) {\Large$\mathcal O_3$};
\node at (3.85,3.5) {\Large$\mathcal O_4$};

\node at (-3.45,1.5) {\Large$\mathcal O_5$};
\node at (6.47,1.5) {\Large$\mathcal O_6$};

\draw (-2,1.1)--(-.55,.17);
\draw (-2,1.9)--(-.55,2.83);

\draw (5,1.1)--(3.55,.17);
\draw (5,1.9)--(3.55,2.83);

\draw [bend right=80] (2.8,3.6) to (-2.6,2.1); 
\draw [bend left=80] (2.8,-.6) to (-2.6,.9); 

\draw [bend right=80] (0,-.6) to (5.5,.9); 
\draw [bend left=80] (0,3.6) to (5.5,2.1); 

\draw (-2.6,2.1) to[bend left=90, looseness=1.35] (5.5,2.1);

\end{tikzpicture}}
\vspace{-7mm}
   \caption{A general $K$-orbit partition  on $\Gamma_2$ where $K=\langle st\rangle\cong \mathbb Z_{14}$}\label{fig:6 orbits}  
\end{figure}
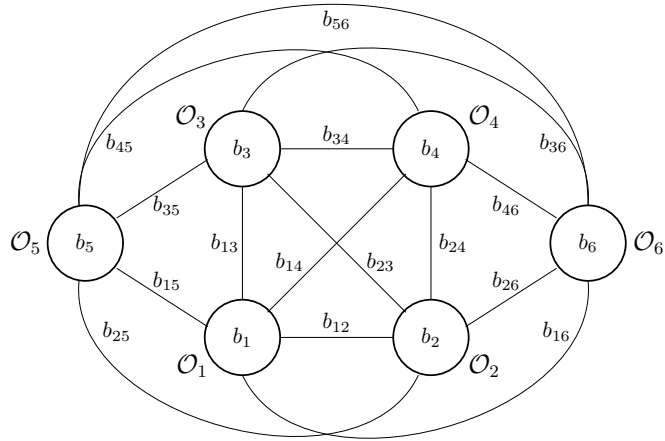

At present we have that $st$ is an order 14 automorphism of the subgraph $\Gamma_1+x$.  We wish to show $st$ cannot extend to an automorphism of 
$\Gamma$.   Our first step toward this objective is to count  in two ways the cardinality of the set 
\[S = \{uvw: \text{$uvw$ is a 2-path with } w \in \mathcal{O}_i\},\] 
where $u$ is a fixed vertex in $\mathcal O_i$ (see Figure \ref{fig:6 orbits}).

		For each of the $b_{i}$ neighbors of $u$ in $\mathcal O_i$ there is a unique $2$-path from $u$ to $w$ (since $\lambda=1$). Similarly, for each of the $13-b_{i}$ non-neighbors of $u$ in $\mathcal O_i$ there are two 2-paths from $u$ to $w$
  (since $\mu=2$).
 Thus $|S|= b_{i} \cdot 1 + (13 - b_{i}) \cdot 2 = 26 - b_{i}$.  
 
 On the other hand, we may condition our count on the location  of the intermediate vertex $v$.  For $v$ in $\mathcal O_j$ there are $b_{ij}\,(b_{ij}-1)$ such 2-paths. (Here and hereafter, we shall identify $b_{ii}$ with $b_i$ for notational convenience.) 
 In addition, $u$ has exactly two neighbors in $\Gamma_1$  each of which has a unique neighbor $w\in \mathcal O_i\setminus \{u\}$ (\cf Remark \ref{rem: coords}). This gives
 $|S|= \sum_{j= 1}^{6} b_{ij} \,(b_{ij} - 1) + 2$.  
 Equating these two expressions for $|S|$, we obtain   
$26 - b_{i} = \sum_{j = 1}^{6} b_{ij} \,(b_{ij} - 1) + 2$.  But due to the fact that  $\sum_{i=1}^6 b_{ij}=12$,  this simplifies to
\begin{equation}36 - \big(b_{i}^{\,2} + b_{i} \big) = \sum_{j = 2}^{6} b_{ij}^{\,2}\end{equation}\label{eqn1}
We divide our analysis into cases based on an assumed value for $b_{i}$. Once a choice of $b_i$ is made, we find all ways of expressing $36 - \big(b_{i}^{\,2 }+ b_{i} \big)$ as a sum of five squares while maintaining the valency requirement $\sum_{i=1}^6 b_{ij}=12$.  Note that $b_i\le 5$ since otherwise the value of $36 - \big(b_{i}^{\,2} + b_{i} \big)$ would be negative. 

All solution sets are provided in the lemma below. Verification of the list is   straightforward, so is left to the reader. 
Note that each solution set is expressed as an ordered pair of the form \[\big(b_i,\big\{a_2,a_3,a_4,a_5,a_6\big\}\big).\] This is because unlike the internal valency $b_i$ which remains fixed, the values $a_2,a_3,\dots,a_6$ may be assigned to the valencies $b_{ij}$, $j\ne i$, in any specified manner. Thus, there are multiple solutions corresponding to each solution set achieved by suitably permuting the members of the multiset  $\big\{a_2,a_3,a_4,a_5,a_6\big\}$. Below, we list these members in decreasing order.

\begin{lemm}\label{orbit type} For $b_{i}\in\{1,3,5\}$ there are no solutions to formula {\rm (1)}. For other values of $\,b_i$ the solutions are listed   as follows: 
\begin{enumerate}
\item[\rm (a)] \,  $\big(0,\big\{4, 3, 3, 1, 1\big\}\big)$ \,and \, 
			$\big(0,\big\{3, 3, 3, 3, 0\big\}\big)$\, when\, $b_{i}=0$.
\item[\rm (b)]\,  $\big(2, \big\{4, 3, 2, 1, 0\big\}\big)$\, when\, $b_{i}=2$,
\item[\rm (c)]\,
$\big(4, \big\{3, 2, 1, 1, 1\big\}\big)$\, and\,  $\big(4, \big\{2, 2, 2, 2, 0\big\}\big)$ \,when\, $b_{i}=4$.
\end{enumerate}
\end{lemm}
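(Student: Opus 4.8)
The plan is to read formula~(1) together with the valency constraint $\sum_{j=1}^{6} b_{ij}=12$ as a system of two Diophantine equations in the five off-diagonal unknowns $a_j:=b_{ij}$ ($j\ne i$). Writing $b_{ii}=b_i$ as in the text, the valency constraint becomes $\sum_{j\ne i} a_j = 12-b_i$, while formula~(1) reads $\sum_{j\ne i} a_j^{\,2} = 36-(b_i^{\,2}+b_i)$. Thus for each admissible value of $b_i$ the task is to list every multiset $\{a_2,\dots,a_6\}$ of nonnegative integers whose sum equals $12-b_i$ and whose sum of squares equals $36-(b_i^{\,2}+b_i)$; since the $a_j$ may be permuted arbitrarily among the off-diagonal valencies, only the underlying multiset is recorded, exactly as in the statement.

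First I would restrict the range of $b_i$. The right-hand side $36-(b_i^{\,2}+b_i)$ must be nonnegative, which forces $b_i\le 5$ (as already noted), so only $b_i\in\{0,1,2,3,4,5\}$ remain.

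The one genuinely efficient step disposes of the three ``no solution'' cases simultaneously by parity. Since $a^2\equiv a\pmod 2$ for every integer $a$, any solution satisfies $\sum_{j\ne i} a_j^{\,2}\equiv\sum_{j\ne i} a_j\pmod 2$. But $36-(b_i^{\,2}+b_i)$ is even, because $b_i^{\,2}+b_i=b_i(b_i+1)$ is a product of consecutive integers, whereas $12-b_i\equiv b_i\pmod 2$. Equality of these parities therefore forces $b_i$ to be even, which rules out $b_i\in\{1,3,5\}$ and leaves only the three even values to examine.

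For $b_i\in\{0,2,4\}$ the enumeration reduces to a small finite search: order the entries as $a_2\ge a_3\ge\dots\ge a_6\ge 0$, bound the leading entry via $a_2^{\,2}\le 36-(b_i^{\,2}+b_i)$, and use the fact that the sum of squares of the remaining four entries is minimized by their most even distribution to discard candidates quickly, checking each survivor against both equations. I expect the only real labor here to be confirming \emph{completeness} of each list rather than producing the displayed solutions; but the search space is tiny and the monotonicity of the sum of squares under ``spreading out'' makes every case a short verification, precisely the routine check the statement leaves to the reader.
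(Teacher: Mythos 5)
Your proposal is correct and matches the paper's approach: the paper simply declares the verification of the list ``straightforward'' and leaves it to the reader, and your reformulation as the pair of Diophantine conditions $\sum_{j\ne i}a_j=12-b_i$, $\sum_{j\ne i}a_j^{\,2}=36-(b_i^{\,2}+b_i)$ followed by a bounded enumeration is exactly that check. Your parity observation $\sum a_j^{\,2}\equiv\sum a_j\pmod 2$, which kills $b_i\in\{1,3,5\}$ at once since $b_i(b_i+1)$ is always even while $12-b_i$ is odd, is a clean shortcut the paper does not record.
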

For future reference, it is convenient to designate these solution sets by type, e.g.	
 \[{\rm I}.\; \big(0, \big\{4, 3, 3, 1, 1\big\}\big),\;\; {\rm II}.\; \big(0, \big\{3, 3, 3, 3, 0\big\}\big),\;\; {\rm III}.\; \big(2, \big\{4, 3, 2, 1, 0\big\}\big)\]
 \[{\rm IV}.\; \big(4, \big\{3, 2, 1, 1, 1\big\}\big),\;\; {\rm V}.\;  \big(4, \big\{2, 2, 2, 2, 0\big\}\big)\]
 
 We also extend this terminology to orbits, saying an orbit is \emph{of type} 
 $\rm T$ if its set of valencies correspond to a solution set of type ${\rm T}$, ${\rm T}\in\{{\rm I, II, III, IV, V}\}$.

 We next count in two ways the number of $2$-paths starting from a fixed vertex 
 $u$ in $\mathcal{O}_i$ and ending at some vertex $w$ in $\mathcal{O}_j$, $j\ne i$.  Here $u$ has exactly $b_{ij}$ neighbors in $\mathcal{O}_j$, and as $\lambda=1$ there exists a unique $2$-path starting at $u$ and ending at $w$ for each neighbor $w$ of $u$ in $\mathcal O_j$.  Similarly, $u$ has  
$14 - b_{ij}$ non-neighbors in  $\mathcal{O}_j$, and as  $\mu=2$ there are exactly two $2$-paths starting at $u$ and ending at each non-neighbor $w$ of $u$ in $\mathcal O_j$.   Thus in total there are $b_{ij} \cdot 1 + (14 - b_{ij}) \cdot 2 = 28 - b_{ij}$ such $2$-paths from $u$ into $\mathcal O_j$ when $j\ne i$.  
		
For the second count, we focus on the location of an intermediate vertex $v$ in each such 2-path. Here $v$ can occur in any of the six $K$-orbits on $\Gamma_2$ as well as in $\Gamma_1$.  In the case of $K$-orbits on $\Gamma_2$, there are $b_{ik}$ choices for $v$ in 
$\mathcal O_k$, and for each such $v$ there are $b_{kj}$ choices for $w$ in $\mathcal O_j$.  This gives $b_{ik}b_{kj}$ 2-paths of desired type. 
In addition, there are two choices for  $v$ in  $\Gamma_1$ each of which has two neighbors $w$ in 
$\mathcal O_j$. This produces four more 2-paths.  Thus, in total there are precisely  
 $\sum_{k= 1}^{6}b_{ik}b_{kj}  + 4$ paths of the type in question when $j\ne i$.

Equating these two counts yields $28 - b_{ij} = \sum_{k = 1}^{6} b_{ik}b_{kj} + 4$, or 
equivalently
\begin{equation}
24 - b_{ij} = \sum_{k = 1}^{6} b_{ik}b_{kj}.
\end{equation}

\begin{lemm}\label{orbit type1}
In a $K$-orbit partition of $\,\Gamma_2$ {\rm (}\cf Figure \ref{fig:6 orbits}{\rm )} we have the following:
\begin{enumerate}
\item[\rm (a)] The number of orbits of type I is at most $2$.
\item[\rm (b)]  The number of orbits of type II is at most $1$.
\item[\rm (c)]  The number of orbits of type III is at most $4$.
\item[\rm (d)]  The number of orbits of type IV is at most $4$.
\item[\rm (e)]  The number of orbits of type V is at most $1$.
\end{enumerate}
\end{lemm}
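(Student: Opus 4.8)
The plan is to package all the orbit valencies into the symmetric $6\times 6$ matrix $B=(b_{ij})$ (with the convention $b_{ii}=b_i$, so that every row of $B$ sums to $12$) and to extract from (1) and (2) a single global identity governing its spectrum. Reading (1) as $\sum_{j}b_{ij}^{\,2}=36-b_i$ and (2) as $(B^2)_{ij}=24-b_{ij}$ for $i\ne j$, and noting that $(B^2)_{ii}=\sum_k b_{ik}^{\,2}=36-b_i$ by symmetry, one checks that these assemble into $B^2+B-12I=24J$, where $I$ is the identity and $J$ the all-ones matrix. The all-ones vector is an eigenvector of $B$ with eigenvalue $12$; on its orthogonal complement $J$ acts as $0$, so the remaining five eigenvalues satisfy $\lambda^2+\lambda-12=0$, i.e. $\lambda\in\{3,-4\}$. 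Thus $B$ has spectrum $\{12,\,3^{(a)},\,(-4)^{(5-a)}\}$ for some $0\le a\le 5$.

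First I would use the trace. Since each $b_i\in\{0,2,4\}$, $\mathrm{tr}\,B=\sum b_i$ is a nonnegative even integer at most $24$, whereas $\mathrm{tr}\,B=12+3a-4(5-a)=7a-8$; evenness forces $a$ even and the bounds force $a\in\{2,4\}$, so $\mathrm{tr}\,B\in\{6,20\}$. Writing $n_{\rm T}$ for the number of orbits of type ${\rm T}$ and using $\mathrm{tr}\,B=2n_{\rm III}+4(n_{\rm IV}+n_{\rm V})$ together with $n_{\rm I}+\dots+n_{\rm V}=6$, this pins the coarse distributions down to a short list; in particular $n_{\rm III}\le 3$, which gives (c) at once. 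For (d) I would add a parity count: letting $N_v$ be the number of ordered pairs $(i,j)$, $i\ne j$, with $b_{ij}=v$, symmetry makes each $N_v$ even, and reading the value multisets off Lemma \ref{orbit type} gives $N_4=n_{\rm I}+n_{\rm III}$, $N_2=n_{\rm III}+n_{\rm IV}+4n_{\rm V}$, $N_0=n_{\rm II}+n_{\rm III}+n_{\rm V}$, whence $n_{\rm III}\equiv n_{\rm IV}\pmod 2$. In the $\mathrm{tr}\,B=20$ distribution with $n_{\rm III}=0$ this forces $n_{\rm IV}$ even, hence $n_{\rm IV}\le4$; in all other distributions $n_{\rm IV}$ is already at most $4$, so (d) follows.

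The bounds (b) and (e) I would get by applying (2) to a single pair of like-type orbits. If $\mathcal O_1,\mathcal O_2$ are both of type II (respectively V), then $b_{12}$ is forced into $\{0,3\}$ (respectively $\{0,2\}$), and the off-diagonal entries $b_{1k},b_{2k}$ then range over that same two-element set, so a finite check shows the attainable values of $\sum_k b_{1k}b_{2k}$ all miss the required target $24-b_{12}$. This contradiction yields $n_{\rm II}\le1$ and $n_{\rm V}\le1$.

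The main obstacle is (a), where the bound is $2$ rather than $1$ and a single pair no longer suffices. Here I would exploit that the value $4$ occurs exactly once in each row of type I or III and in no other row, so the value-$4$ edges form a matching that perfectly pairs the type-I and type-III orbits (and in particular $n_{\rm I}+n_{\rm III}$ is even). Assume $n_{\rm I}\ge 3$. If some two type-I orbits are joined by a value-$4$ edge, then (2) for that pair, together with the rearrangement inequality, forces their valencies to every remaining orbit to coincide; substituting this rigidity into (2) for the pair formed by one of them and a third type-I orbit produces an arithmetic impossibility. Otherwise every type-I orbit is $4$-matched to a distinct type-III orbit, so $n_{\rm III}\ge n_{\rm I}\ge 3$; as there are only six orbits this leaves exactly three of each type and nothing else, whence $N_0=n_{\rm III}=3$ is odd, contradicting the evenness of $N_0$. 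Either way $n_{\rm I}\le 2$. I expect the rigidity step and the bookkeeping of which small products can sum to the prescribed target to be the delicate part, though it reduces to a finite check once the value multisets are fixed.
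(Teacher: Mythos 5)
Your proof is correct, but for parts (a), (c) and (d) it takes a genuinely different route from the paper. The paper's argument is entirely local: for each type $\mathrm T$ it takes two orbits of that type, runs formula (2) on the pair to determine which shared valencies $b_{ij}$ are admissible, and then bounds the number of type-$\mathrm T$ orbits by how many admissibly-valued entries a single row of that type contains (e.g.\ two type III orbits can only share an edge of valency $0$, $3$ or $4$, and a type III row has one entry of each value, so at most four such orbits; two type IV orbits must share an edge of valency $1$, and a type IV row has three $1$'s). Your parts (b) and (e) reproduce exactly this local pairwise check. For (c) and (d) you instead argue globally: the identity $B^2+B-12I=24J$ does hold entry by entry (off-diagonal from (2), diagonal from (1) together with the row sum $12$), the spectrum is $\{12,3^{(a)},(-4)^{(5-a)}\}$ with the eigenvalue $12$ simple, and intersecting $\mathrm{tr}\,B=7a-8$ with $\mathrm{tr}\,B=\sum b_i\in\{0,2,\dots,24\}$ gives $\mathrm{tr}\,B\in\{6,20\}$; combined with the evenness of the $N_v$ this yields $n_{\rm III}\le 3$ (strictly sharper than the paper's bound of $4$) and $n_{\rm IV}\le 4$. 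For (a) the paper simply notes that (2) applied to \emph{any} two type I orbits forces $b_{ij}=4$ (the targets $21$ and $23$ for $b_{ij}=3,1$ are unattainable), so a third type I orbit is impossible outright; your split on whether the valency-$4$ matching pairs two type I orbits is therefore more elaborate than necessary, but both branches close --- I checked that in your rigidity branch the attainable sums are $\{20,26\}$ against target $21$ (for $c=3$) and $\{26,28\}$ against target $23$ (for $c=1$), and the $N_0$-parity contradiction in the other branch is valid. What your spectral approach buys is near computation-free bounds plus the extra constraint $\sum b_i\in\{6,20\}$, which would also streamline Lemmas \ref{no III,IV}--\ref{survivor}; what the paper's approach buys is the explicit list of admissible shared valencies between same-type orbits, which those later lemmas reuse verbatim. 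The only expository gap is that your ``arithmetic impossibility'' is asserted rather than exhibited, but it is a finite check and it does succeed.
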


\begin{proof}
(a) Suppose there exist two orbits $\mathcal O_i$ and $\mathcal O_j$ of type I.  Then since $b_i=b_j=0$,  formula (2) reduces to  
$24 - b_{ij} =\sum_{k \ne i,j} b_{ik}b_{kj}$. 
Note that this formula is satisfied only if $b_{ij}=4$, which results in the solution  $20=3^2+3^2+1^2+1^2$.  
As an orbit of type I admits only one edge of valency 4,  there cannot be a third orbit of this type. \\[1mm]
(b) Let  $\mathcal O_i$ and $\mathcal O_j$ be two orbits of type II. 
Since $b_i=b_j=0$,  formula (2) again reduces to $24 - b_{ij} =\sum_{k \ne i,j} b_{ik}b_{kj}$. But regardless of how one chooses $b_{ij}\in \{0,3\}$ and reorders the corresponding  multiset,  this formula is never satisfied.  Thus there is at most one orbit of type II.\\[1mm]
(c) Let  $\mathcal O_i$ and $\mathcal O_j$ be two orbits of type III. 
Since $b_i=b_j=2$, formula (2) becomes $24 - 5b_{ij} =\sum_{k \ne i,j} b_{ik}b_{kj}$. In this case one has $b_{ij}\in\{0,1,2,3,4\}$, however there are no solutions if 
$b_{ij}\in\{1,2\}$. In contrast, every remaining choice of $b_{ij}$ works. 
Specifically, if $b_{ij}=0$ one gets $24=4\cdot 1+3\cdot 4+2\cdot 3+1\cdot 2$ as a solution. 
For  $b_{ij}=3$ one gets $9=4\cdot 0+2\cdot 4+1\cdot 1+0\cdot 2$, while for 
$b_{ij}=4$  one gets $4=3\cdot 0+2\cdot 1+1\cdot 2+0\cdot 3$. Having only three allowable valencies for edges between pairs of type III orbits, it is not possible to have a fifth orbit of this type.
 \\[1mm]
(d) Given any two orbits $\mathcal O_i$ and $\mathcal O_j$ of type IV, we have that $b_{ij}\in\{1,2,3\}$. As $b_1=b_2=4$, formula (2) becomes 
$24 -9b_{ij} =\sum_{k \ne i,j} b_{ik}b_{kj}$. Clearly $b_{ij}=1$ leads to a solution, namely  $15=1^2+1^2+2^2+3^2$, but other choices of $b_{ij}$ fail. Since a type IV orbit has only  three edges of valency 1, there can be at most four orbits of this type. 
\\[1mm]
(e) Suppose there are two orbits $\mathcal O_i$ and $\mathcal O_j$ of type V. 
 Then since $b_i=b_j=4$,  formula (2) becomes $24-9b_{ij}=\sum_{k \ne i,j} b_{ik}b_{kj}$. Here $b_{ij}\in\{0,2\}$, but it is immediate that neither choice leads to a solution.  This proves there is at most one orbit of type V.
 \end{proof}

In the above, we applied formula (2) to bound orbits of identical type in a $K$-orbit partition of $\Gamma_2$.  We now do the same for orbits of mixed type. Note that we do not strive to obtain sharp bounds at this stage. Our goal is simply to eliminate several possibilities in an expedient manner.   

 \begin{lemm}\label{mixed types}
 Let  $\rm T_i\,, T_j\in\{I,II,III,IV,V\}$. Then the $\rm (T_i\,,T_j)$-entry in Table \ref{table} bounds from above  the number of orbits of type $\rm T_j$ that can coexist with a fixed orbit of  type $\rm T_i$ in a $K$-orbit partition of $\,\Gamma_2$.  
 \end{lemm}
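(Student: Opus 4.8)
The plan is to extend the argument of Lemma~\ref{orbit type1} from pairs of orbits of identical type to pairs of mixed type, the only genuinely new ingredient being that the two internal valencies now differ. Fix an orbit $\mathcal O_i$ of type $\mathrm T_i$ and a coexisting orbit $\mathcal O_j$ of type $\mathrm T_j$, and let $b_i,b_j$ be the corresponding internal valencies, read off from the solution sets of Lemma~\ref{orbit type}. Separating the terms $k=i$ and $k=j$ in formula~(2) gives $b_{ii}b_{ij}=b_ib_{ij}$ and $b_{ij}b_{jj}=b_jb_{ij}$, so formula~(2) rearranges to
\[
24-(1+b_i+b_j)\,b_{ij}=\sum_{k\ne i,j}b_{ik}b_{kj}.
\]
This is the mixed-type analogue of the specialized equations used throughout the proof of Lemma~\ref{orbit type1} (e.g.\ $b_i=b_j=2$ recovers $24-5b_{ij}=\sum_{k\ne i,j}b_{ik}b_{kj}$, while $b_i=b_j=4$ recovers $24-9b_{ij}=\sum_{k\ne i,j}b_{ik}b_{kj}$).

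First I would restrict the admissible values of the shared valency $b_{ij}$. Since $b_{ij}$ is simultaneously an off-diagonal valency of $\mathcal O_i$ and of $\mathcal O_j$, it must lie in the intersection of the supports of the two multisets $\{a_2,\dots,a_6\}$ attached to $\mathrm T_i$ and $\mathrm T_j$. For each such candidate value of $b_{ij}$, I would delete one copy of $b_{ij}$ from each of the two multisets and test whether the four remaining valencies of $\mathcal O_i$ can be matched bijectively with the four remaining valencies of $\mathcal O_j$ so that the resulting sum of products $\sum_{k\ne i,j}b_{ik}b_{kj}$ equals the left-hand side $24-(1+b_i+b_j)b_{ij}$. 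This is exactly the rearrangement check already carried out in Lemma~\ref{orbit type1}, now performed for a pair of distinct multisets, and it identifies the \emph{feasible} valencies $b_{ij}$ for the ordered pair $(\mathrm T_i,\mathrm T_j)$.

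With the feasible valencies in hand, I would bound the $(\mathrm T_i,\mathrm T_j)$-entry of Table~\ref{table} by a consumption count. A fixed orbit $\mathcal O_i$ has exactly five off-diagonal edges, and each coexisting orbit of type $\mathrm T_j$ occupies one of these edges with valency equal to a feasible $b_{ij}$; distinct type-$\mathrm T_j$ orbits occupy distinct edges. Hence the number of type-$\mathrm T_j$ orbits that can accompany $\mathcal O_i$ is at most the total multiplicity, within $\mathcal O_i$'s multiset, of the feasible values for $(\mathrm T_i,\mathrm T_j)$. Since we seek only upper bounds and not sharp ones, this crude count suffices to fill every cell of the table; where no value of $b_{ij}$ is feasible the entry is $0$, reproducing the flavor of parts~(b) and~(e) of Lemma~\ref{orbit type1}.

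I expect the main obstacle to be organizational rather than conceptual: the statement requires discharging the twenty-five ordered pairs $(\mathrm T_i,\mathrm T_j)$, the diagonal ones already being settled in Lemma~\ref{orbit type1}, and for several of the remaining pairs the feasibility test involves a nontrivial bijective matching between two four-element multisets, so the bookkeeping of which valency is consumed and which pairings realize a prescribed product sum must be done with care. A secondary subtlety is that the displayed equation constrains the pair $(\mathcal O_i,\mathcal O_j)$ only through their joint assignment of valencies to the four remaining orbits; since the lemma asserts nothing beyond pairwise coexistence, no global consistency across all six orbits need be verified at this stage, which keeps each case finite and self-contained.
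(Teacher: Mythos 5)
Your proposal matches the paper's proof in both substance and mechanics: the paper specializes formula (2) to each mixed pair exactly as your displayed identity does (\eg $24-5b_{ij}$ for $b_i=b_j=2$ and $24-7b_{ij}$ for the III/IV pair), restricts $b_{ij}$ to the values common to the two multisets, tests realizability by permuting the remaining four-element multisets, and in the one nonzero case $(\mathrm{III},\mathrm{IV})$ concludes the bound $2$ by precisely your consumption count on the feasible edge valencies of $\mathcal O_i$. The only labor-saving device you omit is the paper's observation that any entry of Table \ref{table} equal to the corresponding diagonal entry requires no verification, which cuts the twenty off-diagonal ordered pairs down to the handful actually checked.
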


\begin{proof}
Note that the diagonal entries in Table \ref{table} were previously confirmed in Lemma \ref{orbit type1}. Moreover, one need not check any entry $\rm(T_i\,,T_j)$ that is equal to the diagonal entry $\rm(T_j\,,T_j)$ since the latter is the maximum allowable 
number of orbits of type $\rm T_j$ in any $K$-orbit partition of $\Gamma_2$.   

{\large\begin{table}[th]
\begin{tabular}{l||ccccc}
&I&II&III&IV&V\\ \hline\hline\\[-4mm]
I   & 2 & 0 & 4 & 0 & 0\\[.5mm]
II  & 0 & 1 & 4 & 4 & 1\\[.5mm]
III & 2 & 1 & 4 & 2 & 1\\[.5mm]
IV & 0 & 1& 4 & 4 & 1\\[.5mm]
V  & 0 & 1 & 4 & 4 & 1
\end{tabular}\\[3mm]
\caption{Bounds on the number of orbits of mixed type}\label{table}  
\end{table}}

\noindent{\bf Case 1. $\rm (I,\,II)=(II,\,I)=0$\,}:
Clearly, the only option for the shared edge is $b_{ij}=3$. As $b_i=b_j=0$, formula (2) reduces to $21 =\sum_{k\ne i,j}b_{ik}b_{kj}$ where $b_{ik}\in\{4,3,1,1\}$ and $b_{kj}\in\{3,3,3,0\}$. It is easy to see that no permutation of multisets leads to a solution, \ie $\rm (I,\,II)=0$. (For $\rm (II,\,I)=0$, the only change to the above is $b_{ik}\in\{3,3,3,0\}$ and $b_{kj}\in\{4,3,1,1\}$.)\\[2mm]
 \noindent{\bf Case 2.\ $\rm (I,\,IV)=(IV,\,I)=0$\,}:
In this case $b_i=0$ and $b_j=4$, so formula (2) reduces to 
$21 -5b_{ij}=\sum_{k\ne i,j}b_{ik}b_{kj}$. Here
there are two options for $b_{i,j}$.  If $b_{i,j}=1$ then we get
$16=\sum_{k\ne i,j}b_{ik}b_{kj}$ where $b_{ik}\in \{4,3,3,1\}$ and $b_{kj}\in\{3,2,1,1\}$, and no permutation of multisets leads to a solution. 
For the second option $b_{ij}=3$, we get $6=\sum_{k\ne i,j}b_{ik}b_{kj}$ where $b_{ik}\in \{4,3,1,1\}$ and $b_{kj}\in\{2,1,1,1\}$. Again no permutation of multisets gives a solution.  Thus
$\rm (I,\,IV)=(IV,I)=0$.\\[2mm]
 \noindent{\bf Case 3.\ $\rm (I,\,V)= (V,\,I)=0$\,}: Here the multisets   
$\{4,3,3,1,1\}$ and $\{2,2,2,2,0\}$ are disjoint, so there is no possible choice of valency for the edge between two orbits of these respective types. The result follows at once.\\[2mm] 
\noindent{\bf Case 4.\ $\rm (III,\,IV)=2$\,}: In this case, $b_i=2$ and $b_j=4$ so formula (2) becomes $24-7b_{ij}=\sum_{k\ne i,j}b_{ik}b_{kj}$. There are three choices for $b_{ij}$, namely $b_{ij}\in\{3,2,1\}$. If $b_{ij}=3$ the formula reduces to  $3 =\sum_{k\ne i,j}b_{ik}b_{kj}$ where $b_{ik}\in \{4,2,1,0\}$ and $b_{kj}\in\{2,1,1,1\}$, and it is immediate that there is no solution. If $b_{ij}=2$ we obtain $10=\sum_{k\ne i,j}b_{ik}b_{kj}$ where $b_{ik}\in \{4,3,1,0\}$ and $b_{kj}\in\{3,1,1,1\}$. Here there  is a unique solution, namely $10=4\cdot 1+3\cdot 1 +1\cdot 3+0\cdot 1$.  Finally, if $b_{ij}=1$ we obtain $17=\sum_{k\ne i,j}b_{ik}b_{kj}$ where $b_{ik}\in \{4,3,2,0\}$ and $b_{kj}\in\{3,2,1,1\}$. In this case there are three solutions, namely $17=4\cdot 3+3\cdot 1 +2\cdot 1 +0\cdot 2=
4\cdot 1+3\cdot 3 +2\cdot 2 +0\cdot 1= 
4\cdot 2+3\cdot 1 +2\cdot 3 +0\cdot 1$.
In any case, there are just two possibilities for the valency of an edge from a fixed type III orbit to an orbit of type IV.  We conclude that $\rm (III,\,IV)=2$.\\[2mm] 
As all cases in the lemma statement have been treated, the proof is complete. 
\end{proof}

The reader will note that  the relation in Lemma \ref{mixed types} is not generally symmetric.

 Let us write $\rm [I^{a},II^{\,b},III^{\,c},IV^{\,d},V^{\,e}]$ to indicate a $K$-orbit partition of $\Gamma_2$ having $a$ orbits of type I, $b$ orbits of type II, and so on.
(If an orbit of specific type does not occur in the partition, we simply omit that type from the above partition notation.)    
 
\begin{lemm}\label{no III,IV}
There is no $K$-orbit partition of the form    
$\rm [III^{\,c}, IV^{\,6-c}]$ for any $c$. 
\end{lemm}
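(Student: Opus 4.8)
The plan is to show that the bounds already established in Lemma~\ref{orbit type1} and Lemma~\ref{mixed types} squeeze the hypothesized partition $\rm[III^{\,c},IV^{\,6-c}]$ down to a single concrete candidate, namely $\rm[III^{\,4},IV^{\,2}]$, and then to eliminate that lone survivor by a short double count of the valencies joining the type III orbits to the type IV orbits. First I would dispose of the extreme values of $c$. Since every orbit is of type III or type IV, Lemma~\ref{orbit type1}(c) and (d) bound each class by $4$, which already kills $c=6$ and $c=0$. For $1\le c\le 5$ both types are present, so fixing a type III orbit and invoking the entry $\rm(III,IV)=2$ of Table~\ref{table} shows it can coexist with at most two type IV orbits; hence $6-c\le 2$, i.e. $c\ge 4$. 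Together with $c\le 4$ (at most four type III orbits) this forces $c=4$, so the only partition left to rule out is $\rm[III^{\,4},IV^{\,2}]$.

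Next I would analyze this candidate directly, labeling the type III orbits $\mathcal O_1,\dots,\mathcal O_4$ and the type IV orbits $\mathcal O_5,\mathcal O_6$. The crucial input is already contained in Case~4 of the proof of Lemma~\ref{mixed types}: the valency of any edge between a type III orbit and a type IV orbit lies in $\{1,2\}$, since $3$ is excluded by formula (2) and the values $0,4$ never occur among type IV valencies. Because the five off-diagonal valencies of a type III orbit are the \emph{distinct} values $\{4,3,2,1,0\}$, the two valencies $b_{i5},b_{i6}$ from $\mathcal O_i$ to the two type IV orbits are distinct and both lie in $\{1,2\}$; therefore $\{b_{i5},b_{i6}\}=\{1,2\}$ for each $i=1,\dots,4$. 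Summing over the four type III orbits gives $\sum_{i=1}^4(b_{i5}+b_{i6})=4\cdot 3=12$.

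On the other hand I would read off the same quantity from the two type IV rows, using the multiset description $\{3,2,1,1,1\}$ of their off-diagonal valencies. For $\mathcal O_5$, since $b_{i5}\in\{1,2\}$ for $i\le 4$, the value $3$ can only be realized by $b_{56}$; hence $b_{56}=3$ and $\sum_{i=1}^4 b_{i5}=5$. The identical argument applied to $\mathcal O_6$ yields $\sum_{i=1}^4 b_{i6}=5$ (consistently with $b_{56}=3$). Adding these, $\sum_{i=1}^4(b_{i5}+b_{i6})=10$, which contradicts the value $12$ found above. This contradiction eliminates $\rm[III^{\,4},IV^{\,2}]$ and completes the proof.

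I expect the only delicate point — and the step most worth stating carefully — to be the passage from ``edge valencies lie in $\{1,2\}$'' to ``$\{b_{i5},b_{i6}\}=\{1,2\}$''. This hinges on the fact that a type III orbit realizes each of its five off-diagonal valencies exactly once, so the values $1$ and $2$ cannot both be reused on a single type III row, forcing them to be split between the two type IV orbits. Everything else is routine bookkeeping with the multisets defining the orbit types, the row-sum constraint $\sum_j b_{ij}=12$, and the symmetry $b_{ij}=b_{ji}$; no case analysis beyond what Lemmas~\ref{orbit type1} and~\ref{mixed types} already supply should be required.
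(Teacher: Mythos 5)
Your proof is correct and follows essentially the same route as the paper: both reduce to the single candidate $\rm [III^{\,4},IV^{\,2}]$ via Lemmas \ref{orbit type1} and \ref{mixed types}, and both derive the contradiction from the fact that an edge between a type III and a type IV orbit can only carry valency $1$ or $2$, leaving the type IV valency $3$ with nowhere to go. The paper phrases this last step as an ``unusable edge of valency $3$'' (after noting the four type III orbits exhaust their valencies $0,3,4$ among themselves and the IV--IV edge must be $1$), whereas you force $b_{56}=3$ and close with the double count $12\ne 10$; this is a cosmetic variation of the same bookkeeping.
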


\begin{proof}
 By Lemma \ref{orbit type1}, one has $2\le c\le 4$. As a type III orbit has a single edge of valency 4 and a type IV orbit has none, there are $c/2$ edges of valency 4 in the partition. This means   $c$ must be even.  However, $c= 2$ is prohibited. Indeed, by Lemma \ref{mixed types} the existence of a type III orbit requires that there be at most two type IV orbits, hence
 $6-c\le 2$. 
 
 We come now to  the only remaining case which is $c=4$.     
  As shown in the proof of Lemma \ref{orbit type1}, each pair of type III orbits must share an edge of valency 0, 3 or 4.  But as there are four
 type III orbits, every such valency gets used.  On the other hand, we showed in Lemma \ref{orbit type1} that  two orbits of type IV must share an edge of valency 1. This leaves a type IV orbit with  an unusable edge of valency 3, again a contradiction.   
\end{proof}

One conclusion of Lemma \ref{no III,IV} is that a viable $K$-orbit partition of $\Gamma_2$ must contain an orbit of type I, II or V. By way of the next two lemmas, we are able to narrow this down considerably.

\begin{lemm}\label{no type I}
There is no $K$-orbit partition that contains an orbit of type {\rm I}. 
\end{lemm}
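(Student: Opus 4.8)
The plan is to show that the presence of a type~I orbit forces the entire partition into a rigid shape that cannot survive the valency bookkeeping. First I would invoke the mixed-type bounds of Lemma~\ref{mixed types}: reading off the row of Table~\ref{table} indexed by~I, a fixed type~I orbit permits $0$ orbits of type~II, $0$ of type~IV, and $0$ of type~V to coexist with it. Hence any $K$-orbit partition containing an orbit of type~I can involve only orbits of types~I and~III. Writing such a partition as $[\mathrm I^{\,a},\mathrm{III}^{\,c}]$ with $a+c=6$, the cardinality bounds of Lemma~\ref{orbit type1} give $a\le 2$ (part~(a)) and $c\le 4$ (part~(c)). Since $a\ge 1$ by hypothesis, the only way to satisfy $a+c=6$ under both constraints is $a=2$, $c=4$. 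Thus it suffices to rule out a partition of the form $[\mathrm I^{2},\mathrm{III}^{4}]$.

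Next I would analyze this candidate partition directly. Label the two type~I orbits $\mathcal O_1,\mathcal O_2$ and the four type~III orbits $\mathcal O_3,\dots,\mathcal O_6$. By Lemma~\ref{orbit type1}(a) the edge joining the two type~I orbits has valency $b_{12}=4$, so the valency multiset $\{4,3,3,1,1\}$ of $\mathcal O_1$ spends its unique $4$ on $\mathcal O_2$, leaving the four valencies $\{3,3,1,1\}$ to be distributed among the type~III orbits; in particular $\mathcal O_1$ must join two of the type~III orbits with valency~$3$. I would then examine the bookkeeping from the type~III side. A type~III orbit $\mathcal O_k$ has valency multiset $\{4,3,2,1,0\}$, of which three valencies are spent on its edges to the other three type~III orbits. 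By the proof of Lemma~\ref{orbit type1}(c), every edge between two type~III orbits has valency in $\{0,3,4\}$; since $\{4,3,0\}$ are the only members of $\{4,3,2,1,0\}$ lying in $\{0,3,4\}$, those three III--III valencies must be exactly $\{4,3,0\}$. This forces the remaining two valencies $\{2,1\}$ to be precisely the valencies of $\mathcal O_k$ toward $\mathcal O_1$ and $\mathcal O_2$. Consequently every edge between a type~I orbit and a type~III orbit has valency $1$ or $2$, never~$3$. This contradicts the conclusion that $\mathcal O_1$ must meet two type~III orbits with valency~$3$, and the lemma follows.

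The argument is short and essentially forced, so I do not anticipate a genuine obstacle. The only step requiring care is the identification, from the type~III side, that the valencies $1$ and $2$ in $\{4,3,2,1,0\}$ cannot serve as III--III valencies and therefore must be the two valencies directed at the type~I orbits; this is exactly what produces the clean clash with the type~I orbit's demand for valency-$3$ edges into the type~III orbits. It is worth noting that one never needs the full strength of formula~(2) here: the contradiction emerges purely from the admissible edge valencies already recorded in Lemmas~\ref{orbit type1} and~\ref{mixed types}, combined with the fixed valency multisets of types~I and~III.
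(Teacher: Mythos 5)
Your proposal is correct and follows essentially the same route as the paper: both reduce to the partition $\rm [I^2,III^4]$ via the mixed-type bounds and then exploit the fact (from the proof of Lemma \ref{orbit type1}(c)) that edges between type III orbits can only carry valency $0$, $3$ or $4$. The paper phrases the final contradiction as the type III orbits having an unusable valency-$2$ edge (since type I orbits have no valency $2$), whereas you phrase it as the type I orbits being unable to realize their valency-$3$ edges; these are two views of the same clash.
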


\begin{proof}
By Lemma \ref{mixed types}, the only possible orbit partition containing a type I orbit is $\rm [I^2,III^4]$. However, we have seen that no pair of type III orbits can share an edge of valency 2 (\cf proof of Lemma \ref{orbit type1}(c)), while type I orbits have no edges of that valency. This implies orbits of type III have unusable edges of valency 2 from which the result follows.  
\end{proof}

\begin{lemm}\label{survivor}
A $K$-orbit partition of $\,\Gamma_2$ must   be of the form
$\rm [II,IV^4,V]$. 
\end{lemm}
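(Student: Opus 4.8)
The plan is to determine the number $c$ of type III orbits in the partition and show it must be $0$, after which the claimed shape is forced by the global bounds. The main tool throughout is what I will call the \emph{valency-matching principle}: the five valencies $b_{ij}$ on the edges incident to a fixed orbit reproduce exactly the off-diagonal multiset of its type, and consequently, if two orbits of types $\mathrm{T}$ and $\mathrm{T}'$ are joined by an edge of valency $b_{ij}$, then $b_{ij}$ must appear in the off-diagonal multiset of \emph{both} types. By Lemma \ref{no type I} the partition contains no orbit of type I, so every orbit is of type II, III, IV, or V; writing $b,c,d,e$ for the respective numbers of orbits, Lemma \ref{orbit type1} gives $b\le 1$, $c\le 4$, $d\le 4$, $e\le 1$, with $b+c+d+e=6$.

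First I would pin down the parity of $c$. Among the multisets $\{3,3,3,3,0\}$, $\{3,2,1,1,1\}$, $\{2,2,2,2,0\}$ of types II, IV, V none contains the value $4$, whereas a type III orbit carries exactly one edge of valency $4$. By the valency-matching principle every valency-$4$ edge must join two type III orbits, and each type III orbit lies on exactly one such edge; hence these edges form a perfect matching on the type III orbits, forcing $c\in\{0,2,4\}$. When $c=0$, the bounds $d\le 4$ and $b,e\le 1$ together with $b+d+e=6$ force $d=4$ and $b=e=1$, which is precisely $[\mathrm{II},\mathrm{IV}^4,\mathrm{V}]$. It therefore remains to eliminate $c=4$ and $c=2$.

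For $c=4$, label the type III orbits $A,B,C,D$ with valency-$4$ matching, say, $\{A,B\}$ and $\{C,D\}$. Edges among type III orbits take valencies in $\{0,3,4\}$ (proof of Lemma \ref{orbit type1}(c)), so after the two $4$'s are assigned, each type III orbit sends its unique $0$ and unique $3$ to the remaining two type III orbits and is left to send $\{2,1\}$ to the two non-III orbits $P,Q$. Each of $P,Q$ thus receives four edges of valency in $\{1,2\}$; since type II has no valency in $\{1,2\}$, we get $P,Q\in\{\mathrm{IV},\mathrm{V}\}$, and as $e\le 1$ while Lemma \ref{no III,IV} excludes $[\mathrm{III}^4,\mathrm{IV}^2]$, only $[\mathrm{III}^4,\mathrm{IV},\mathrm{V}]$ survives. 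But then the valency-matching principle forces the edge joining the type IV and type V orbits to have valency $3$ (read from the type IV side, whose other four valencies are $\{2,1,1,1\}$) and simultaneously valency $0$ (read from the type V side, whose other four valencies are $\{2,2,2,2\}$), a contradiction.

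For $c=2$, the two type III orbits $A,B$ must be joined by the forced valency-$4$ edge, so each of $A,B$ distributes $\{3,2,1,0\}$ over the four non-III orbits. The entry $(\mathrm{III},\mathrm{IV})=2$ of Table \ref{table} gives $d\le 2$, and with $b+d+e=4$, $b,e\le 1$ this forces $[\mathrm{II},\mathrm{III}^2,\mathrm{IV}^2,\mathrm{V}]$. I would then push the valency-matching principle through this configuration: the type II orbit $M$ and type V orbit $S$ share only the value $0$, so the edge joining them has valency $0$, whence $M$ sends valency $3$ and $S$ sends valency $2$ along each of their remaining edges; in particular $A$ sends $3$ to $M$ and $2$ to $S$, leaving $\{1,0\}$ for $A$'s edges to the two type IV orbits. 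Tracing the forced $3$'s and $2$'s into each type IV orbit (whose multiset $\{3,2,1,1,1\}$ then has only $\{1,1,1\}$ left for its edges to $A$, $B$, and the other type IV orbit) pins each type IV orbit to send valency $1$ to $A$, while $A$ can send $1$ to only one of them — the contradiction that finishes the case and leaves $c=0$. I expect this last bookkeeping to be the main obstacle, as it is where the forced-valency tracking is heaviest; the remainder is a clean interplay of the matching principle with the bounds already established.
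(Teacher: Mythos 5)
Your proof is correct, and it is organized differently from the paper's. The paper first invokes Lemmas \ref{no III,IV} and \ref{no type I} to force an orbit of type II or V into the partition and then splits on which of these is present, reducing to the subcases $\mathrm{[II,III^2,IV^2,V]}$ and $\mathrm{[II,III^4,IV/V]}$; you instead split directly on the number $c$ of type III orbits, using the observation that valency-$4$ edges form a perfect matching on the type III orbits to force $c\in\{0,2,4\}$ (the same parity argument as in Lemma \ref{no III,IV}, here applied in the presence of types II and V). The contradictions you reach in the subcases also differ from the paper's: for $\mathrm{[II,III^2,IV^2,V]}$ the paper counts where the type V orbit's four valency-$2$ edges can land, whereas you propagate forced valencies outward from the II--V edge (which must be $0$) until the two type IV orbits both demand a valency-$1$ edge to a type III orbit that has only one such edge to give; both arguments are sound. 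The most substantive difference is in the $c=4$ case: your organization makes you confront the configuration $\mathrm{[III^4,IV,V]}$ explicitly, and you eliminate it cleanly by showing the IV--V edge would have to carry valency $3$ read from the IV side and valency $0$ read from the V side. The paper's corresponding branch (``the partition contains a type V orbit but no type II orbit'') is dispatched in one sentence by asserting that a type II orbit must then be present, which does not obviously exclude $\mathrm{[III^4,IV,V]}$; your treatment closes that case directly and is, if anything, the more complete write-up. What your approach buys is a uniform bookkeeping device (your ``valency-matching principle'') and an exhaustive, self-contained case list; what the paper's buys is brevity, at the cost of leaning harder on the reader to see why the remaining mixed configurations cannot occur.
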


\begin{proof}
By Lemmas \ref{no III,IV} and \ref{no type I}, a $K$-orbit partition must contain at least one orbit of type II or V.    Suppose there exists a type II orbit in the partition. 
Then by Lemma \ref{orbit type1} there cannot be a second orbit of type II, nor can there be more than one orbit of type V. Thus  the partition is of the form $\rm [II, III^c, IV^d, V^e]$ where $c+d+e=5$ and $0\le e\le 1$. 
Also, as  in the proof of Lemma \ref{no III,IV},
$c$ must be even. 

Suppose first that $c=2$.  Then by  Lemma \ref{mixed types} we have $d\le 2$, whence 
the partition must be of the form 
$\rm [II,III^2,IV^2,V]$.  In the proof of Lemma \ref{mixed types}(d), we saw that an edge shared by an orbit of type III and one of type IV must have valency 1 or 2. As type III orbits have only one edge of each such valency and there are two type IV orbits, both edges must be shared with a type IV  orbit.  But type III orbits have only one edge of valency 2, hence these edges are now used up.  This is a contradiction because type V orbits have four edges of valency 2.  

For $c=4$, the argument is similar. An edge shared by two type III orbits must have valency $0,3$ or 4. As there are four type III orbits in this case, every such edge is used between orbits of this type.  In particular, the single edge of valency 3 is no longer available.  But a type II orbit has four edges of valency 3, so we again reach a contradiction. This proves $c=0$ when a type II orbit is in the partition.

It only remains to see what occurs if we first assume the partition contains a type V orbit. But in this case the four valency 2 edges of this orbit are shared by four other orbits, be they of type III or IV.  This means there must be an orbit of type II in the partition, a case we have already treated. We conclude that $\rm [II, IV^4,V]$ is the only possible form of a $K$-orbit partition of $\Gamma_2$ as claimed. 
\end{proof}

We depict the orbit partition of type $\rm [II, IV^4,V]$ in Figure \ref{fig:ep}. It too will be shown to not exist  in due course.

\begin{figure}
\resizebox{9cm}{7.5cm}{
\begin{tikzpicture}
\draw[thick] (0,0) circle (6mm); 
\draw [thick](3,0) circle (6mm);
\draw [thick](0,3) circle (6mm)  node at (0,3) {4};
\draw[thick] (3,3) circle (6mm)  node at (3,3) {4};
\draw[thick] (-2.5,1.5) circle (6mm)  node at (-2.5,1.5) {0};
\draw[thick] (5.5,1.5) circle (6mm) node at (5.5,1.5) {4};

\draw (.6,0)--(2.4,0);
\draw (.6,3)--(2.4,3);
\draw (0,.6)--(0,2.4);
\draw (3,.6)--(3,2.4);
\draw (.4,.4)--(2.6,2.6);
\draw (.4,2.6)--(2.6,.4);

\node at (0,0) {4};
\node at (1.5,.238) {1};
\node at (.75,1.2) {1};
\node at (-.2,1.5) {1}; 
\node at (-1.2,.9) {3};
\node at (5,.095) {2}; 

\node at (3,0) {4};
\node at (3.2,1.5) {1}; 
\node at (2.2,1.2) {1};
\node at (-2.1,.095) {3}; 
\node at (4.2,.9) {2};

\node at (1.5,3.225) {1};
\node at (1.5,5.05) {0};

\node at (-2.1,3.1) {3}; 
\node at (4.95,3.1) {2};

\node at (-1.2,2.15) {3};
\node at (4.2,2.15) {2};

\node at (-.82,-.45) {\Large IV};
\node at (3.85,-.45) {\Large IV};
\node at (-.82,3.38) {\Large IV};
\node at (3.89,3.4) {\Large IV};

\node at (-3.4,1.5) {\Large II};
\node at (6.42,1.5) {\Large V};

\draw (-2,1.1)--(-.55,.17);
\draw (-2,1.9)--(-.55,2.83);

\draw (5,1.1)--(3.55,.17);
\draw (5,1.9)--(3.55,2.83);

\draw [bend right=80] (2.8,3.6) to (-2.6,2.1); 
\draw [bend left=80] (2.8,-.6) to (-2.6,.9); 

\draw [bend right=80] (0,-.6) to (5.5,.9); 
\draw [bend left=80] (0,3.6) to (5.5,2.1); 

\draw (-2.6,2.1) to[bend left=90, looseness=1.35] (5.5,2.1);

\end{tikzpicture}}
\vspace{-7mm}
   \caption{The lone surviving $K$-orbit partition of $\Gamma_2$ (\cf Lemma \ref{survivor})}\label{fig:ep}
\end{figure}
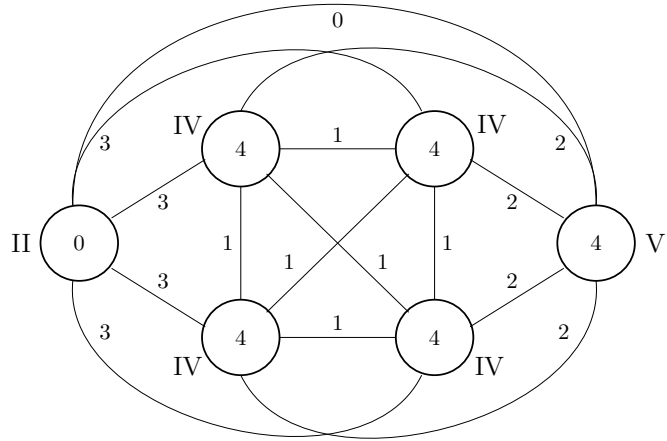

\begin{lemm}\label{no C3}
Neither a type {\rm IV} orbit nor a type {\rm V} orbit can contain a $3$-cycle.  
\end{lemm}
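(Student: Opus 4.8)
The plan is to exploit that $K \cong \mathbb Z_{14}$ acts \emph{regularly} on each of its orbits $\mathcal O_i$ on $\Gamma_2$, since by Lemma \ref{orbits} every such orbit has size $14 = |K|$. Fixing a base vertex $u \in \mathcal O_i$ and identifying $u^{(st)^m}$ with $m \in \mathbb Z_{14}$, the generator $st$ acts as $m \mapsto m+1$, so adjacency within $\mathcal O_i$ depends only on the difference of indices. Thus the subgraph induced on $\mathcal O_i$ is a circulant $\mathrm{Circ}(14, S_i)$, where the connection set $S_i = \{\, d : u \sim u^{(st)^d}\,\}$ satisfies $S_i = -S_i$ and $|S_i| = b_i$. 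For a type IV or type V orbit Lemma \ref{orbit type} gives $b_i = 4$. Since $7$ is the only nonzero self-negating element of $\mathbb Z_{14}$ and $|S_i| = 4$ is even, $7 \notin S_i$; hence $S_i = \{\pm a, \pm b\}$ with $a, b \in \{1,\dots,6\}$ and $a \neq b$.

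Next I would translate the presence of a $3$-cycle into an additive condition on $S_i$. After applying a suitable power of $st$ we may place one vertex of the cycle at $0$; the remaining two vertices $j,k$ then satisfy $j,k \in S_i$ and $k-j \in S_i$, which is equivalent to saying that $S_i$ contains three (not necessarily distinct) elements summing to $0 \pmod{14}$ — the three resulting vertices are automatically distinct. I would enumerate such zero-sum triples according to how many summands are $\pm a$ versus $\pm b$. A triple drawn from a single $\pm$-pair forces $\pm 3a \equiv 0$ or $\pm a \equiv 0 \pmod{14}$, impossible since $a \neq 0$ and $\gcd(3,14)=1$; the genuinely mixed triples force $b \equiv \pm 2a$ or $a \equiv \pm 2b \pmod{14}$. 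Hence a $3$-cycle can occur only when, after relabeling, $S_i = \{\pm a, \pm 2a\}$, these four residues being distinct since $a \neq 7$ and $\gcd(3,14)=1$.

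Finally I would reach a contradiction with $\lambda = 1$. In $\mathrm{Circ}(14, \{\pm a, \pm 2a\})$ the adjacent pair $0, a$ has the two common neighbors $-a$ and $2a$: indeed $-a, 2a \in S_i$, while $-a - a = -2a \in S_i$ and $2a - a = a \in S_i$, and these neighbors are distinct since $3a \not\equiv 0 \pmod{14}$. This contradicts the fact that adjacent vertices of $\Gamma$ have a unique common neighbor. Therefore no type IV or type V orbit contains a $3$-cycle. (The argument also covers type II vacuously, its internal valency $0$ leaving no internal edges.) I expect the only real content to be the recognition of the circulant structure and the observation that every would-be triangle is forced into the ``doubling'' configuration $\{\pm a, \pm 2a\}$; the points to verify carefully are that $7 \notin S_i$ (so that $|S_i|=4$ splits into two $\pm$-pairs), that the zero-sum enumeration is exhaustive, and that the two common neighbors are genuinely distinct, each of which reduces to $\gcd(3,14)=1$.
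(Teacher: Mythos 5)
Your proof is correct and follows essentially the same route as the paper: both exploit the regular $\mathbb Z_{14}$-action to view the induced subgraph on a type IV or V orbit as a circulant on $\mathbb Z_{14}$ with a symmetric $4$-element connection set (the paper's ``unique involution'' remark is exactly your observation that $7\notin S_i$, so the set splits into two inverse pairs $\{r^{\pm1},t^{\pm1}\}$), and then analyze which connection sets admit triangles. Your zero-sum-triple reduction to the single configuration $\{\pm a,\pm 2a\}$, dispatched by a $\lambda$-violation on the edge $\{0,a\}$, is a cleaner packaging of the paper's three-case, figure-by-figure enumeration of the possibilities $r=t^{\,m}$.
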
		

\begin{proof}
First observe that type IV and type V orbits have internal valency 4.  In any orbit of either type we may fix a vertex $v$ and denote its neighbors by $v^p, v^{r}, v^s, v^{t}$ where $p,r,s,t\in K$. However $v^{\,p^{-1}},v^{\,r^{-1}}, v^{\,s^{-1}}, v^{\,t^{-1}}$ must  also be neighbors of $v$. This means, with one exception, the automorphisms $p,r,s,t$ must come in pairs. The one exception is if two or four of $p,r,s,t$ are involutory. However, this cannot be the case since  $K$ contains a unique involution.  Therefore, without loss of generality we may assume  $r=p^{-1}$ and 
   $t=s^{-1}$.  Moreover, we know that $|r|,|t|\in\{7, 14\}$.
    Three broad cases can arise here, namely $|r|=|t|=7$, $|r|=|t|=14$ and $|r|=7,\, |t|=14$.  
    
\smallskip    
    
 \noindent {\bf Case 1. $|r|=|t|=7$}: In this case the orbit is comprised of two connected components but that won't affect our argument.
   Since $K$ contains a unique subgroup of order 7, we must have $r=t^{\,m}$ for some integer $m\in \{2,3\}$. However, both subcases violate $\lambda=1$ as depicted in Figure \ref{|r|=|t|=7}.  Specifically, if $r=t^2$ then the edge $v^t v^{\,t^{\,2}}$ lies on two 3-cycles, while if $r=t^3$ the edge $v v^{\,t^{\,4}}$ suffers the same fate. (Note that the subcases $m=4,5$ are identical to $m=3,2$ respectively.) 
 
 \smallskip 
 
 \noindent {\bf Case 2. $|r|=|t|=14$}:  Here we have $r=t^{\,m}$ where $m\in \{3,5\}$. Both subcases violate $\mu=2$ as indicated in Figure \ref{|r|=|t|=14}. Specifically, If $r=t^{\,3}$ then vertices $v^{\,t^{\,3}}$,  $v^{\,t^{\,5}}$ have $v^{\,t^{\,2}},  v^{\,t^{\,4}}, v^{\,t^{\,6}}$ as common neighbors while 
if $r=t^{\,5}$ then vertices $v$, $v^{\,t^{\,4}}$ have $v^{\,t^{\,5}}$, $v^{\,t^{\,9}}$, $v^{\,t^{-1}}$ as common neighbors.  
 
 \smallskip
 
 \noindent {\bf Case 3. $|r|=7, |t|=14$}:  Here there are three subcases to consider, namely $r\in\{t^2,t^4,t^6\}$ as indicated in Figure \ref{|r|=7, |t|=14}. The first and last of these subcases lead to violations. Specifically, $r=t^{\,2}$ violates $\lambda=1$ since  the edge $vv^{\,t}$ lies on two 3-cycles with respective antipodal vertices $v^{\,t^{\,2}}$ and 
 $v^{\,t^{\,-1}}$. In contrast, $r=t^{\,6}$ violates $\mu=2$ since the vertices $v^{\,t^{\,4}}$ and 
 $v^{\,t^{\,11}}$ have $v^{\,t^{\,3}},v^{\,t^{\,10}},v^{\,t^{\,12}}$ as common neighbors (as well as $v^{\,t^{\,5}}$).  
 Curiously, the case $r=t^{\,4}$ does not lead to any $\lambda$ or $\mu$ violations, however it produces  no 3-cycles either.  This completes the proof of the lemma.
\end{proof}

 \begin{center}  
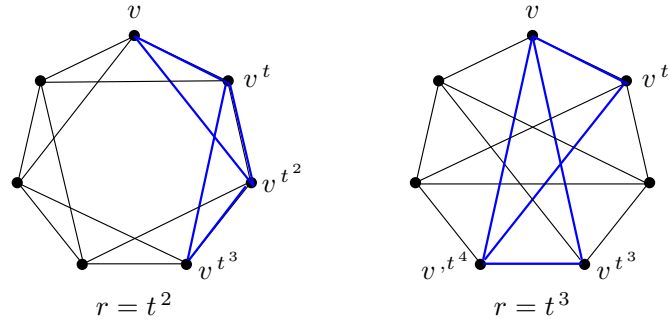
\begin{figure}
\resizebox{9cm}{!}{
\begin{tikzpicture}
 
 \node[draw, regular polygon, regular polygon sides=7, minimum size=3cm] (a) {};
\foreach \i in {1,2,...,7}
\fill (a.corner \i) circle[radius=2pt];
\draw[thick,blue] (0,1.5)--(1.5,-.38);
\draw[thick,blue] (1.15,.9)--(.67,-1.3); 

\draw[thick,blue] (0,1.5)--(1.2,.9);
\draw[thick,blue] (1.2,.9)--(1.5,-.38);

\draw[thick,blue] (1.4,-.38)--(.69,-1.3);

\node at (0,-1.85) {$r=t^{\,2}$};
\node at (0,1.8) {$v$};
\node at (1.55,1) {$v^{\,t}$};
\node at (1.87,-.276) {$v^{\,t^{\,2}}$};
\node at (1.07,-1.37) {$v^{\,t^{\,3}}$};

\begin{scope}[rotate=51.4+51.4]
\draw (1.15,.9)--(.67,-1.3); 
\end{scope}
 
\begin{scope}[rotate=51.4+51.4+51.4]
\draw (1.15,.9)--(.67,-1.3); 
\end{scope}
 
\begin{scope}[rotate=51.4+51.4+51.4+51.4]
\draw (1.15,.9)--(.67,-1.3); 
\end{scope}

\begin{scope}[rotate=51.4+51.4+51.4+51.4+51.4]
\draw (1.15,.9)--(.67,-1.3); 
\end{scope}
 
\begin{scope}[rotate=51.4+51.4+51.4+51.4+51.4+51.4]
\draw (1.15,.9)--(.67,-1.3); 
\end{scope}

 \begin{scope}[xshift=5cm]
\node[draw, regular polygon, regular polygon sides=7, minimum size=3cm] (a) {};
\foreach \i in {1,2,...,7}
   \fill (a.corner \i) circle[radius=2pt];
   \draw[thick,blue] (0,1.5)--(.62,-1.3);
     \draw [thick, blue](1.15,.9)--(-.6,-1.3);
     \draw (1.5,-.351)--(-1.5,-.351); 
       \draw (-1.15,.9)--(.6,-1.3);
        \draw[thick,blue] (0,1.5)--(-.62,-1.3);
        \draw[thick,blue] (0,1.5)--(1.2,.9);
\draw[thick,blue] (.62,-1.35)--(-.62,-1.35);
  \node at (0,-1.855) {$r=t^{\,3}$};
\node at (0,1.8) {$v$};
 \node at (1.55,1) {$v^{\,t}$};
 \node at (1.07,-1.37) {$v^{\,t^{\,3}}$};
 \node at (-1.09,-1.37) {$v^{\,,t^{\,4}}$};

\begin{scope}[rotate=51.4]
\draw  (0,1.5)--(.62,-1.3); 
\end{scope}

\begin{scope}[rotate=51.4+51.4]
\draw  (0,1.5)--(.62,-1.3); 
\end{scope}

 \end{scope}
 
\end{tikzpicture}}
\caption{The case $|r|=|t|=7$ of Lemma \ref{no C3}}\label{|r|=|t|=7}
\end{figure}  
\end{center}  

\begin{figure}
\resizebox{12cm}{!}{
\begin{tikzpicture}
 
 \node[draw, regular polygon, regular polygon sides=14, minimum size=6cm] (a) {};
\foreach \i in {1,2,...,14}
\fill (a.corner \i) circle[radius=2.5pt];
\draw (-.75,3)--(2.75,1.3);
\node at (-.75,3.2) {\Large$v$};
\node at (0,-3.5) {\LARGE$r=t^{\,3}$};
 
\begin{scope}[rotate=25.714]
\draw (-.75,3)--(2.75,1.3); 
\end{scope}

\begin{scope}[rotate=51.428]
\draw (-.75,3)--(2.75,1.3);
\end{scope}

\begin{scope}[rotate=77.142]
\draw (-.75,3)--(2.75,1.3);
\end{scope}

\begin{scope}[rotate=102.856]
\draw (-.75,3)--(2.75,1.3);
\end{scope}

\begin{scope}[rotate=102.856+25.714]
\draw (-.75,3)--(2.75,1.3);
\end{scope}

\begin{scope}[rotate=102.856+51.428]
\draw (-.75,3)--(2.75,1.3);
\end{scope}

\begin{scope}[rotate=102.856+77.142]
\draw (-.75,3)--(2.75,1.3);
\end{scope}

\begin{scope}[rotate=102.856+102.856]
\node at (-.75,3.2) {\Large$\;\,v^{\,t^{\,6}}$};
\draw (-.75,3)--(2.75,1.3);
\end{scope}

\begin{scope}[rotate=102.856+102.856+25.714]
\node at (-.75,3.2) {\Large$\;\;\,v^{\,t^{\,5}}$};
\draw (-.75,3)--(2.75,1.3);
\draw [very thick,blue](-.75,2.95)--(.75,2.95);
\end{scope}

\begin{scope}[rotate=102.856+102.856+51.428]
\draw (-.75,3)--(2.75,1.3); \node at (-.75,3.2) {\Large$\;\;\; v^{\,t^{\,4}}$};
\draw [very thick,blue](-.75,2.95)--(.75,2.95);
\end{scope}

\begin{scope}[rotate=102.856+102.856+77.142]
\draw [very thick,blue](-.75,3)--(2.75,1.3);  \node at (-.75,3.2) {\Large$\quad v^{\,t^{\,3}}$};
\draw [very thick, blue](-.75,2.95)--(.75,2.95); 
\end{scope}

\begin{scope}[rotate=102.856+102.856+102.856]   
\node at (-.75,3.2) {\Large$\quad\; v^{\,t^{\,2}}$};
\draw [very thick, blue](-.75,3)--(2.75,1.3);
\draw [very thick,blue](-.75,2.95)--(.75,2.95);
\end{scope}

\begin{scope}[rotate=102.856+102.856+102.856+25.714]
\draw (-.75,3)--(2.75,1.3);
\end{scope}

\begin{scope}[xshift=8.5cm]

 \node[draw, regular polygon, regular polygon sides=14, minimum size=6cm] (a) {};
\foreach \i in {1,2,...,14}
\fill (a.corner \i) circle[radius=2.5pt];
\draw[very thick,blue] (-.75,3)--(2.75,-1.35);
\node at (-.75,3.2) {\Large$v$};
\node at (0,-3.5) {\LARGE\bf$r=t^{\,5}$};
 
\begin{scope}[rotate=25.714]
\node at (-.75,3.2) {\Large$\;\,v^{\,t^{\,-1}}$};
\draw[very thick,blue] (-.75,3)--(2.75,-1.35);
\draw [very thick,blue] (-.75,2.9)--(.75,2.9);
\end{scope}

\begin{scope}[rotate=51.428]
\draw (-.75,3)--(2.75,-1.35);
\end{scope}

\begin{scope}[rotate=77.142]
\draw (-.75,3)--(2.75,-1.35);
\end{scope}

\begin{scope}[rotate=102.856]
\draw (-.75,3)--(2.75,-1.35);
\end{scope}

\begin{scope}[rotate=102.856+25.714]
\node at (-.75,3.4) {\Large$v^{\,t^{\,9}}$};
\draw  [very thick,blue] (-.75,3)--(2.75,-1.35);
\end{scope}

\begin{scope}[rotate=102.856+51.428]
\draw (-.75,3)--(2.75,-1.35);
\end{scope}

\begin{scope}[rotate=102.856+77.142]
\draw(-.75,3)--(2.75,-1.35);
\end{scope}

\begin{scope}[rotate=102.856+102.856]
\draw (-.75,3)--(2.75,-1.35);
\end{scope}

\begin{scope}[rotate=102.856+102.856+25.714]
\draw (-.75,3)--(2.75,-1.35);
\node at (-.75,3.2) {\Large$\;\,\;v^{\,t^{\,5}}$};
\end{scope}

\begin{scope}[rotate=102.856+102.856+51.428]
\draw [very thick,blue](-.75,3)--(2.75,-1.35); 
\node at (-.75,3.2) {\Large$\;\,\;v^{\,t^{\,4}}$};
\draw [very thick,blue] (-.75,2.9)--(.75,2.9);
\end{scope}

\begin{scope}[rotate=102.856+102.856+77.142]
\draw (-.75,3)--(2.75,-1.35);
\end{scope}

\begin{scope}[rotate=102.856+102.856+102.856]
\draw (-.75,3)--(2.75,-1.35);
\end{scope}

\begin{scope}[rotate=102.856+102.856+102.856+25.714]
\draw (-.75,3)--(2.75,-1.35);
\end{scope}

\end{scope}

\end{tikzpicture}}
 
\caption{The case $|r|=|t|=14$ of Lemma \ref{no C3}}\label{|r|=|t|=14}
\end{figure}

\begin{figure}
\resizebox{12cm}{!}{
\begin{tikzpicture}
 
 \node[draw, regular polygon, regular polygon sides=14, minimum size=6cm] (a) {};
\foreach \i in {1,2,...,14}
   \fill (a.corner \i) circle[radius=2.5pt];
\draw[very thick,blue] [bend right=20] (-.75,3) to (2,2.3);
\node at (-.75,3.2) {\Large$v$};
\node at (0,-3.5) {\LARGE$r=t^{\,2}$};
\draw [very thick,blue] (-.6,2.9)--(.6,2.93);
 
\begin{scope}[rotate=25.714]
\draw [very thick,blue] [bend right=20] (-.75,3) to (2,2.3); 
\node at (-.75,3.2) {\Large$v^{\,t^{\,-1}}$};
\draw [very thick,blue] (-.6,2.9)--(.6,2.93);
\end{scope}

\begin{scope}[rotate=51.428]
\draw [bend right=20] (-.75,3) to (2,2.3);
\end{scope}

\begin{scope}[rotate=77.142]
\draw [bend right=20] (-.75,3) to (2,2.3);
\end{scope}

\begin{scope}[rotate=102.856]
\draw [bend right=20] (-.75,3) to (2,2.3);
\end{scope}

\begin{scope}[rotate=102.856+25.714]
\draw [bend right=20] (-.75,3) to (2,2.3);
\end{scope}

\begin{scope}[rotate=102.856+51.428]
\draw [bend right=20] (-.75,3) to (2,2.3);
\end{scope}

\begin{scope}[rotate=102.856+77.142]
\draw [bend right=20] (-.75,3) to (2,2.3);
\end{scope}

\begin{scope}[rotate=102.856+102.856]
\draw [bend right=20] (-.75,3) to (2,2.3);
\end{scope}

\begin{scope}[rotate=102.856+102.856+25.714]
\node[draw, regular polygon, regular polygon sides=14, minimum size=6cm] (a) {};
\draw [bend right=20] (-.75,3) to (2,2.3);
\end{scope}

\begin{scope}[rotate=102.856+102.856+51.428]
\node[draw, regular polygon, regular polygon sides=14, minimum size=6cm] (a) {};
\end{scope}

\begin{scope}[rotate=102.856+102.856+77.142]
\draw [bend right=20] (-.75,3) to (2,2.3);
\end{scope}

\begin{scope}[rotate=102.856+102.856+102.856]
\draw [bend right=10] (-.75,3) to (2,2.3);
\node at (-.75,3.2) {\Large\quad\;$v^{\,t^{\,2}}$};
\end{scope}

\begin{scope}[rotate=102.856+102.856+102.856+25.714]
\draw [bend right=20] (-.75,3) to (2,2.3);
\node at (-.75,3.2) {\Large\;\;\;$v^{\,t}$};
\draw [very thick,blue] (-.6,2.9)--(.6,2.93);
\end{scope}

\begin{scope}[xshift=8.5cm]

\node[draw, regular polygon, regular polygon sides=14, minimum size=6cm] (a) {};
\foreach \i in {1,2,...,14}
\fill (a.corner \i) circle[radius=2.5pt];
\draw (-.75,3)--(1.9,-2.4);
\node at (-.75,3.2) {\Large$v$};
\node at (0,-3.5) {\LARGE$r=t^{\,6}$};
 
\begin{scope}[rotate=25.714]
\draw (-.75,3)--(1.9,-2.4);
\end{scope}

\begin{scope}[rotate=51.428]
\node at (-.75,3.3) {\Large$\;\,v^{\,t^{\,12}}$};
\draw [very thick,blue] (-.75,3)--(1.9,-2.4);
\end{scope}

\begin{scope}[rotate=77.142]
\node at (-.75,3.4) {\Large$\;\,v^{\,t^{\,11}}$};
\draw [very thick,blue] (-.75,3)--(1.9,-2.4);
\draw [very thick,blue] (-.6,2.9)--(.6,2.93);
\end{scope}

\begin{scope}[rotate=102.856]
\node at (-.75,3.5) {\Large$\;\,v^{\,t^{\,10}}$};
\draw (-.75,3)--(1.9,-2.4);
\draw [very thick,blue] (-.6,2.9)--(.6,2.93);
\end{scope}

\begin{scope}[rotate=102.856+25.714]
\draw (-.75,3)--(1.9,-2.4);
\end{scope}

\begin{scope}[rotate=102.856+51.428]
\draw (-.75,3)--(1.9,-2.4);
\end{scope}

\begin{scope}[rotate=102.856+77.142]
\draw (-.75,3)--(1.9,-2.4);
\end{scope}

\begin{scope}[rotate=102.856+102.856]
\draw (-.75,3)--(1.9,-2.4);
\end{scope}

\begin{scope}[rotate=102.856+102.856+25.714]
\draw (-.75,3)--(1.9,-2.4);
\end{scope}

\begin{scope}[rotate=102.856+102.856+51.428]
\draw  [very thick,blue](-.75,3)--(1.9,-2.4);
\node at (-.75,3.2) {\Large$\;\,\;v^{\,t^{\,4}}$};
\end{scope}

\begin{scope}[rotate=102.856+102.856+77.142]
\node at (-.75,3.2) {\Large$\;\,\;v^{\,t^{\,3}}$};
\draw (-.75,3)--(1.9,-2.4);\draw [very thick,blue] (-.6,2.9)--(.6,2.93);

\end{scope}

\begin{scope}[rotate=102.856+102.856+102.856]
\draw (-.75,3)--(1.9,-2.4);
\end{scope}

\begin{scope}[rotate=102.856+102.856+102.856+25.714]
\draw (-.75,3)--(1.9,-2.4);
\end{scope}

\end{scope}
 
\end{tikzpicture}}

\resizebox{5.5cm}{!}{
\begin{tikzpicture}
 
 \node[draw, regular polygon, regular polygon sides=14, minimum size=6cm] (a) {};
\foreach \i in {1,2,...,14}
\fill (a.corner \i) circle[radius=2.5pt];
\draw (-.75,3)--(3,-.035);\draw (.62,3)--(2.67,-1.25);
\node at (-.75,3.2) {\Large$v$};
\node at (0,-3.5) {\LARGE$r=t^{\,4}$};
 
\begin{scope}[rotate=51.428]
\draw (-.75,3)--(3,-.035);\draw (.62,3)--(2.67,-1.25);
\end{scope}

\begin{scope}[rotate=102.856]
\draw (-.75,3)--(3,-.035);\draw (.62,3)--(2.67,-1.25);
\end{scope}

\begin{scope}[rotate=102.856+51.428]
\draw (-.75,3)--(3,-.035);\draw (.62,3)--(2.67,-1.25);
\draw (-.6,2.9)--(.6,2.93);

\end{scope}

\begin{scope}[rotate=102.856+102.856]
\draw (-.75,3)--(3,-.035);\draw (.62,3)--(2.67,-1.25);
\end{scope}

\begin{scope}[rotate=102.856+102.856+51.428]
\draw (-.75,3)--(3,-.035);\draw (.62,3)--(2.67,-1.25);\node at (-.74,3.37) {\Large$v^{\,t^{\,4}}$};
\end{scope}

\begin{scope}[rotate=102.856+102.856+102.856]
\draw (-.75,3)--(3,-.035);\draw (.62,3)--(2.67,-1.25);
\end{scope}

\end{tikzpicture}}
\caption{The case $|r|=7, |t|=14$ of Lemma \ref{no C3}}\label{|r|=7, |t|=14}
\end{figure}
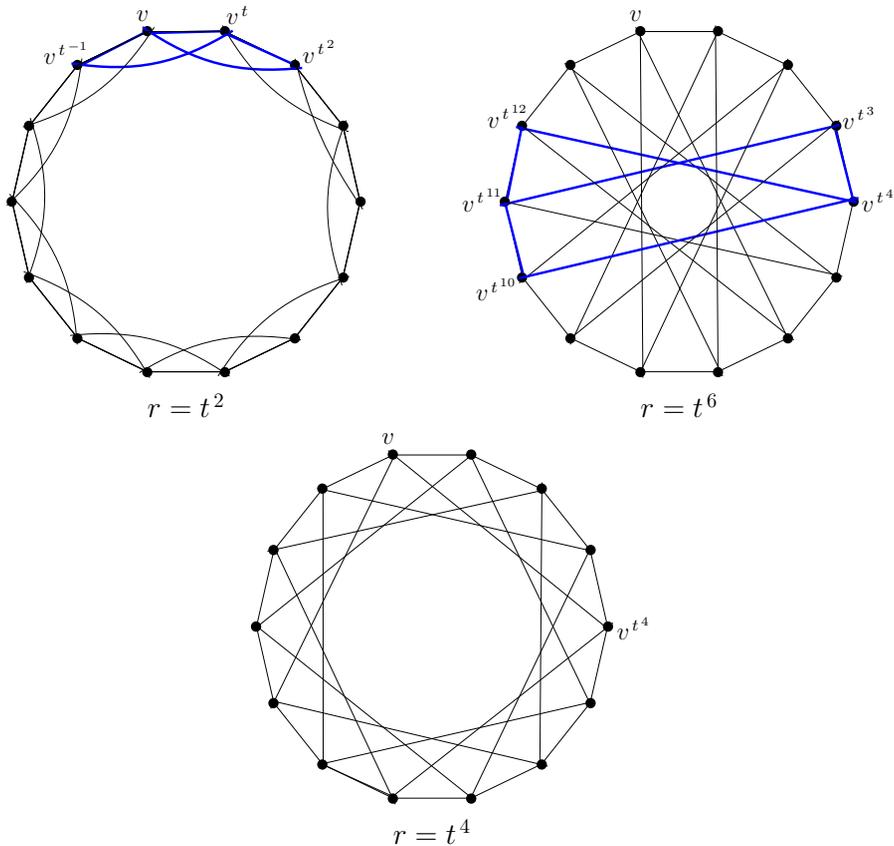   

\begin{thm}\label{no 14}
$G$ does not contain any order $14$ elements.  
\end{thm}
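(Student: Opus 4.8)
The plan is to argue by contradiction. Suppose $G$ contains an element of order $14$. By Lemma~\ref{order 14 element} we may take $K=\langle st\rangle\cong\mathbb Z_{14}\le G$, and by Lemma~\ref{orbits} $K$ fixes only the root vertex $x$ and acts regularly on each of its seven orbits of size $14$. In particular each of the six $K$-orbits $\mathcal O_1,\dots,\mathcal O_6$ on $\Gamma_2$ may be coordinatized by $\mathbb Z_{14}$, so that every internal adjacency, and every adjacency between two orbits, is governed by a translation-invariant connection set in $\mathbb Z_{14}$. By Lemma~\ref{survivor} the orbit partition must be the configuration $\rm [II,IV^4,V]$ of Figure~\ref{fig:ep}, and I would begin by recording its valencies explicitly: internal valency $0$ for the type II orbit and $4$ for each type IV orbit and for the type V orbit, valency $3$ from type II to each type IV, valency $2$ from type V to each type IV, valency $1$ between distinct type IV orbits, and valency $0$ between the type II and type V orbits.

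Next I would pin down the internal structure forced by Lemma~\ref{no C3}. Each type IV and type V orbit is a $4$-regular triangle-free circulant on $\mathbb Z_{14}$, and the surviving subcase $r=t^{\,4}$ in the proof of Lemma~\ref{no C3} determines its connection set to be $\{\pm1,\pm4\}$ up to ${\rm Aut}(\mathbb Z_{14})$; in particular the four neighbours of any vertex inside such an orbit form an independent set. I would then invoke Lemma~\ref{140 triangles}: every vertex of $\Gamma_2$ lies on exactly five $3$-cycles of $\Gamma_2$, for a total of $140$. Because each $\mathcal O_i$ is $K$-invariant, $K$ permutes these $140$ cycles while preserving the orbit-type of each vertex, so every $K$-orbit of $3$-cycles carries a well-defined \emph{signature} recording the orbits containing its three vertices. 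A short argument then rules out short $K$-orbits of $3$-cycles: an orbit of length $1,2$, or $7$ would force $st$, $s$, or the involution $t=(st)^7$ to fix a vertex of $\Gamma_2$, contradicting Lemmas~\ref{7-orbits} and~\ref{orbits} (an involution cannot act freely on a three-element set). Hence the $140$ cycles split into exactly ten $K$-orbits of length $14$, each of a single signature.

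With this in hand I would enumerate the admissible signatures. Since no $3$-cycle lies inside a single orbit (Lemma~\ref{no C3} for the type IV and V orbits, and internal valency $0$ for the type II orbit), and since the type II and type V orbits are mutually non-adjacent, only a short list of signatures survives. Using $\lambda=1$ and $\mu=2$ to recover the cross-orbit connection sets from the rigid internal data, I would then count, vertex by vertex, the five $3$-cycles incident to a vertex of the type II orbit — each of which must have its apex in the type II orbit and its base either inside one type IV orbit or joining two type IV orbits — and likewise for the type V orbit, and show these demands cannot be met at once.

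The hard part is precisely this final step. The purely numerical bookkeeping of cycle–vertex incidences is by itself consistent with the value $140$, so the contradiction cannot come from counting cycles against vertices alone; it must be extracted from the fine circulant constraints on the inter-orbit connection sets, and the delicate point is to keep that analysis finite and checkable by hand (this being a stage that precedes any computer search). I would exploit the two most rigid features of the surviving configuration, namely that the type II orbit is an independent set and that it is \emph{non}-adjacent to the type V orbit: these force the two common neighbours guaranteed by $\mu=2$ for each type II/type V pair to lie entirely in the type IV orbits and $\Gamma_1$, which over-constrains the valency-$2$ and valency-$3$ connection sets into the type IV orbits and is what I expect to yield the contradiction.
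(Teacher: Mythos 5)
Your setup is sound as far as it goes: the reduction to $K=\langle st\rangle$ via Lemma \ref{order 14 element}, the forced partition $\rm[II,IV^4,V]$ of Lemma \ref{survivor} with the valencies you list, the triangle-freeness of the type IV and type V orbits from Lemma \ref{no C3}, and the splitting of the $140$ triangles of $\Gamma_2$ into ten free $K$-orbits are all correct (though the last two facts, and the circulant connection set $\{\pm1,\pm4\}$, turn out not to be needed). But the proof is not finished. Your final two paragraphs announce that the ``fine circulant constraints'' should over-constrain the cross-orbit connection sets and that you \emph{expect} this to produce a contradiction; no contradiction is derived, and that derivation is the entire content of the theorem. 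Worse, the guiding assertion that ``the contradiction cannot come from counting cycles against vertices alone'' is false, and it steers you away from the argument that actually closes the case.

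The missing idea is a pigeonhole count localized at the type V orbit, using nothing beyond the valencies of Figure \ref{fig:ep} and the absence of internal triangles. Fix a type IV orbit $\mathcal O$: it has $28$ internal edges, each lying on a unique triangle ($\lambda=1$) whose apex is outside $\mathcal O$ by Lemma \ref{no C3}. The apex cannot lie in another type IV orbit, because the valency between two type IV orbits is $1$ while an apex needs two neighbours in $\mathcal O$; hence all $28$ apexes lie in the union of the type II and type V orbits, which has exactly $28$ vertices. A vertex there has at most $3$ neighbours in $\mathcal O$, and $\lambda=1$ forbids two base edges sharing a vertex, so each such vertex serves as apex at most once --- hence, by counting, exactly once. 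Applying this to all four type IV orbits, every vertex $z$ of the type V orbit is the apex of exactly four such triangles, and these consume all eight of its neighbours in the type IV orbits. Since $z$ has no neighbours in the type II orbit and its own orbit contains no triangle, $z$ lies on only four $3$-cycles of $\Gamma_2$, contradicting the five required by Lemma \ref{140 triangles}. This is the count you need to insert in place of your unproved final step; once it is in place, the circulant and signature machinery can be discarded.
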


\begin{proof}
By way of contradiction, suppose $G$ contains an element of order $14$. By Lemma 
\ref{order 14 element} we may assume this element is $st$, and as usual we set $K=\langle st\rangle$.  
Lemma \ref{survivor} now asserts that a $K$-orbit partition must be of the form 
$\rm [II,IV^4,V]$. (See Figure \ref{fig:ep}.)  
We now show this cannot occur.

Let $\mathcal O$ be one of the four type IV orbits in the partition. Clearly $\mathcal O$ has 28 internal edges each of which must lie on a unique 3-cycle.  By Lemma 
\ref{no C3}, the third vertex of each such 3-cycle must lie in an orbit other than $\mathcal O$.  However, this vertex cannot lie in any other type IV orbit since every pair of type IV orbits are conjoined by an orbit edge of valency 1.  Thus the 28 vertices used to complete the aforementioned  3-cycles must lie in the two remaining orbits of type II and V.  Since there are a total of 28 vertices in these two orbits, we conclude that every one of these vertices gets used in this fashion. Of great importance to us is the fact that every vertex in the orbit of type V lies on a unique 3-cycle with an edge in $\mathcal O$.  

The above argument applies equally well to each type IV orbit.  This means that each vertex in the unique type V orbit $\,\mathcal O'$ must lie on exactly four 3-cycles conceived in this manner.  However, $\,\mathcal O'$ has no internal 3-cycles and there are no edges between $\,\mathcal O'$ and the type II orbit of the partition. Thus every vertex in $\,\mathcal O'$ lies on a total of four 3-cycles in $\Gamma_2$,  which contradicts the fact that every vertex of   
$\Gamma_2$ is required to lie on five 3-cycles in $\Gamma_2$ (\cf  Lemma \ref{140 triangles}).  We conclude that $G$ cannot contain any element of order 14.  
\end{proof}

\section{Consequences of divisibility by 7}\label{cyclic or Frobenius} 
 
\begin{prop}\label{2 divides}
$|G|$ is not divisible by $14$. 
\end{prop}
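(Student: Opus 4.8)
The plan is to argue by contradiction: assume $14 \mid |G|$ and then manufacture an element of order $14$, in direct conflict with Theorem~\ref{no 14}. Since $14 \mid |G|$, both $2$ and $7$ divide $|G|$, so $G$ contains an involution $t$ and an element $s$ of order $7$; as $|G| \mid 2\cdot 3^3\cdot 7\cdot 11$ we have $7^2 \nmid |G|$, so the Sylow $7$-subgroup $P=\langle s\rangle$ is cyclic of order $7$. Because $s$ and $t$ have coprime orders, it suffices to exhibit \emph{some} involution commuting with \emph{some} element of order $7$: any such commuting pair generates a cyclic group of order $14$.

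To locate such a commuting pair I would invoke the already-quoted Corollary~2.4 of \cite{Makhnev2}. Since $G$ possesses an involution and $7\mid |G|$, we fall into its case~(1): $|G|$ divides $42$ and $[O(G),t]=1$. Together with $14\mid |G|$ this forces $|G|\in\{14,42\}$. In either case $n_7\equiv 1\pmod 7$ and $n_7\mid |G|/7\in\{2,6\}$ give $n_7=1$, so $P\trianglelefteq G$; being normal of odd order, $P\le O(G)$. The relation $[O(G),t]=1$ then yields $[s,t]=1$, whence $st$ has order $14$. This contradicts Theorem~\ref{no 14} and completes the argument.

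The real weight of this proposition rests on Theorem~\ref{no 14}; granting that theorem, the reduction is short, and the only delicate point is to secure a genuinely \emph{centralizing} involution rather than one that merely inverts $s$ --- a dihedral subgroup of order $14$ would contain no element of order $14$ and would not suffice. This is exactly what the hypothesis $[O(G),t]=1$ supplies. Should one prefer to avoid citing \cite{Makhnev2} here, I would instead proceed self-contained: by Remark~\ref{|s|=7} every order-$7$ automorphism fixing the root $x$ is a power of $s$, so the stabilizer $G_x$ has a unique subgroup of order $7$; one checks that any element normalizing $P$ must fix the unique $P$-fixed vertex (\cf Lemma~\ref{7-orbits}), giving $N_G(P)=G_x$, and then analyzes an involution normalizing $P$. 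The main obstacle in that route is the inverting (dihedral) case, which one would eliminate by extending the involution's action to $\Gamma_2$ and exploiting $\lambda=1$, $\mu=2$, in the spirit of Lemma~\ref{no C3}.
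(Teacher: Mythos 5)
Your proof is correct and follows essentially the same route as the paper: both reduce to $|G|\in\{14,42\}$ via the quoted result of \cite{Makhnev2}, use $[O(G),t]=1$ to produce a commuting pair consisting of an involution and an element of order $7$ (hence an element of order $14$), and then invoke Theorem~\ref{no 14}. The only difference is one of packaging --- the paper enumerates the isomorphism types of groups of order $14$ and $42$ and checks each, whereas you obtain $P\trianglelefteq G$, $P\le O(G)$, $[s,t]=1$ directly from Sylow's theorem, which is a slightly cleaner rendering of the same argument.
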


\begin{proof}
Suppose 14 divides $|G|$.  Then by \cite{Makhnev} one has $|G|\in \{14,42\}$. Recall from  \cite{Makhnev2} that  $[O(G),t]=1$, where $O(G)$ is the maximal odd order normal subgroup of $G$ and $t\in G$ is an involution.  This alone rules out $D_{14}$, $D_{42}$,  $D_{14}\times \mathbb Z_3$ and   $Frob(42)$ as possible isomorphism types of $G$. The only remaining possibilities  are 
$\mathbb Z_{14}$, $\mathbb Z_{42}$,  ${\mathbb Z_7}\times S_3$ and 
${Frob}(21)\times \mathbb Z_2$.  However, each of these groups has an element of order 14 so is ruled out by Theorem \ref{no 14}.
\end{proof}

\begin{lemm}\label{P7 normal}
Suppose $7$ divides $|G|$, and let $P_7$ be a Sylow $7$-subgroup of $G$.  Then $P_7$ is normal in $G$.  
\end{lemm}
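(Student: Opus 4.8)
The plan is to use Sylow's theorems to pin down the number $n_7$ of Sylow $7$-subgroups and then eliminate every value other than $1$. Since $7^2\nmid |G|$ (recall $|G|$ divides $2\cdot 3^3\cdot 7\cdot 11$ by \cite{Makhnev}), we have $|P_7|=7$, and $n_7$ must satisfy $n_7\equiv 1\pmod 7$ together with $n_7\mid |G|/7$, hence $n_7\mid 2\cdot 3^3\cdot 11=594$. A direct check shows the only divisors of $594$ congruent to $1$ modulo $7$ are $1$, $22$, and $99$. As both $22$ and $99$ are divisible by $11$, it suffices to rule out $n_7=22$ and $n_7=99$; equivalently, I would show that $7\mid |G|$ forces $11\nmid |G|$, since $n_7>1$ would then be impossible.

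First I would record that $G$ has no element of order $77$. Suppose $z\in G$ with $|z|=77$. Then $z^{11}$ has order $7$, so by Lemma \ref{7-orbits} it fixes a unique vertex $x$. As $z$ commutes with $z^{11}$, it preserves the (singleton) fixed-point set of $z^{11}$, whence $z$ also fixes $x$. Consequently the order $11$ element $z^{7}$ fixes $x$ and acts on the $14$-vertex first subconstituent $\Gamma_1(x)$. Since $iR$ is the unique $\Gamma_1$-neighbor of $iL$, any automorphism fixing $x$ and $iL$ also fixes $iR$, so the number of fixed points of $z^7$ on $\Gamma_1$ is even; but an order $11$ permutation of a $14$-element set fixes either $3$ or $14$ points, and parity forces all $14$ to be fixed. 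Then $z^7$ fixes $\Gamma_1+x$ pointwise, so $z^7=\mathrm{id}$ by Lemma \ref{identity aut}, contradicting $|z^7|=11$.

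Next, assuming for contradiction that $11\mid |G|$ (so $77\mid |G|$), I would produce exactly such a forbidden element. Because $|G|/11$ divides $2\cdot 3^3\cdot 7=378$, the Sylow count gives $n_{11}\mid 378$ and $n_{11}\equiv 1\pmod{11}$; the only such divisor is $1$, so the Sylow $11$-subgroup $P_{11}\cong\mathbb Z_{11}$ is normal in $G$. Then $P_7$ acts on $P_{11}$ by conjugation through a homomorphism $P_7\to\mathrm{Aut}(\mathbb Z_{11})\cong\mathbb Z_{10}$; as $\gcd(7,10)=1$ this homomorphism is trivial, so $P_7$ centralizes $P_{11}$ and $P_7P_{11}$ contains an element of order $77$. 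This contradicts the previous paragraph, so $11\nmid |G|$, and therefore $n_7=1$ and $P_7\trianglelefteq G$.

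The main obstacle I anticipate is the case $n_7=99$, where the order bound alone still permits $|G|\in\{693,1386,2079,4158\}$; the entire argument's leverage comes from forcing $P_{11}$ to be both normal and centralized, so the nonexistence of order $77$ elements, resting on Lemmas \ref{7-orbits} and \ref{identity aut} together with the $L/R$ pairing, is the crux. A more computational alternative would invoke the involution result of \cite{Makhnev2} to dispatch the even orders $1386,4158$ and the value $n_7=22$ directly, but the order $77$ argument handles all cases uniformly and avoids that dependency.
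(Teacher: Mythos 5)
Your proof is correct, and while it shares the paper's central mechanism, it diverges at the decisive step. Both arguments reduce to the two problematic orders where Sylow counting alone leaves $n_7=99$ open (you phrase this as $n_7\in\{1,22,99\}$ since you work with the coarser bound $|G|\mid 2\cdot 3^3\cdot 7\cdot 11$ rather than the refined $3^3\cdot 7\cdot 11$; either works), and both observe that in those cases $P_{11}$ is normal and centralized by $P_7$ because $\gcd(7,|\mathrm{Aut}(\mathbb Z_{11})|)=1$. The paper then finishes purely group-theoretically: $[P_7,P_{11}]=1$ gives $P_{11}\le N_G(P_7)$, so $n_7=[G\!:\!N_G(P_7)]$ is a power of $3$, and the congruence $n_7\equiv 1\pmod 7$ forces $n_7=1$. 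You instead extract an element of order $77$ from $P_7P_{11}$ and kill it with a graph-theoretic fixed-point argument: the order-$7$ power pins down a root vertex, the order-$11$ power must then act on the $14$-point first subconstituent with an even number of fixed points (by the $L/R$ matching), leaving only the trivial action, which Lemma \ref{identity aut} forbids. Your route is longer but buys more: it shows outright that $7\mid|G|$ excludes $11\mid|G|$, a fact the paper only recovers later from the embedding $G\hookrightarrow N_S(\langle s\rangle)$ in Proposition \ref{if 7 divides}. One small imprecision: a permutation of $14$ points that fixes all $14$ is the identity permutation, so strictly you should say the permutation \emph{induced} by $z^7$ on $\Gamma_1$ has $3$ or $14$ fixed points; the contradiction in the latter case comes from $z^7$ being trivial on all of $\Gamma$ by Lemma \ref{identity aut}, not from the induced permutation having order $11$. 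This does not affect the validity of the argument.
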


\begin{proof}
By  \cite{Makhnev} and Proposition \ref{2 divides}, $|G|$ must divide $3^3\cdot 7\cdot 11$. By Sylow's Theorem, the number $n_7$ of Sylow $7$-subgroups must satisfy $n_7=[G\!:\!N_G(P_7)]$ and $n_7\equiv 1\!\pmod{7}$. It is straightforward to deduce that $P_7\trianglelefteq G$ for all orders of $G$ except possibly $3^2\cdot 7\cdot 11$ and $3^3\cdot 7\cdot 11$.
However, in these two cases  one has $P_{11}\trianglelefteq G$ where $P_{11}$ is a Sylow 11-subgroup of $G$. 
As $P_7$ does not embed in ${\rm Aut}(P_{11})\cong \mathbb Z_{10}$ it follows that $[P_7,P_{11}]=1$. But then $P_{11}\le N_G(P_7)$ whence $n_7=[G\!:\!N_G(P_7)]\in\{1,9,27\}$.  We now conclude from the congruence $n_7\equiv 1\!\pmod{7}$ that
 $n_7=1$,  \ie $P_7\trianglelefteq G$ as claimed.
\end{proof}

 \begin{prop}\label{if 7 divides}
 Divisibility  by $7$ implies $|G|$ divides $21$.
 \end{prop}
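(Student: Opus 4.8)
The plan is to combine the normality of the Sylow $7$-subgroup with an explicit centralizer computation inside $\mathrm{Aut}(\Gamma_1)$, and then to finish with the $N/C$ theorem. First I would observe that $|G|$ must be \emph{odd}: since $7$ divides $|G|$, Proposition \ref{2 divides} forbids $2\mid |G|$ (otherwise $14\mid |G|$). Together with the bound $|G|\mid 2\cdot 3^3\cdot 7\cdot 11$ from \cite{Makhnev}, this forces the Sylow $7$-subgroup $P_7$ to have order exactly $7$, so $P_7\cong\mathbb Z_7$, and by Lemma \ref{P7 normal} it is normal in $G$.

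Next I would pin down the ambient embedding. Let $s$ generate $P_7$; by Lemma \ref{7-orbits} it fixes a unique vertex $x$. For any $g\in G$ the conjugate $g^{-1}sg$ lies in $P_7=\langle s\rangle$ and hence is a nontrivial power of $s$, so it too fixes only $x$; but $g^{-1}sg$ fixes $g^{-1}(x)$, forcing $g(x)=x$. Thus $G$ fixes the root vertex $x$, and by Lemma \ref{identity aut} (faithfulness) together with Remark \ref{|s|=7} the restriction map embeds $G$ into $\mathrm{Aut}(\Gamma_1)$, with $P_7=\langle s\rangle$ and $s=s_Ls_R$.

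The key step is to compute $C_G(P_7)=C_G(s)$. Here I would reuse the description $C_S(s)=(\langle s_L\rangle\times\langle s_R\rangle)\rtimes\langle t\rangle$ from the proof of Lemma \ref{order 14 element}, where $S=\mathrm{Sym}(V(\Gamma_1))$. Among the ``translations'' $s_L^{\,a}s_R^{\,b}$, only those with $a=b$ (the powers of $s$) preserve the matching $\{iL,iR\}$, so the centralizer of $s$ inside $\mathrm{Aut}(\Gamma_1)$ is exactly $\langle s,t\rangle\cong\mathbb Z_{14}$. Intersecting with the odd-order group $G$ eliminates the involution $t$, leaving $C_G(s)=\langle s\rangle=P_7$. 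I expect this to be the main obstacle, since it requires both the fixed-vertex argument of the previous paragraph and a careful identification of which elements of $C_S(s)$ actually act as graph automorphisms of $\Gamma_1$.

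Finally, the $N/C$ theorem gives $G/P_7=G/C_G(P_7)\hookrightarrow\mathrm{Aut}(P_7)\cong\mathrm{Aut}(\mathbb Z_7)\cong\mathbb Z_6$, so $|G/P_7|$ divides $6$. Since $|G|$ is odd, $|G/P_7|\in\{1,3\}$, whence $|G|\in\{7,21\}$; in particular $|G|$ divides $21$, as required.
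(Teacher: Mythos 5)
Your proof is correct, and it reaches the conclusion by a genuinely different final step than the paper, though the two arguments share the same skeleton: $P_7=\langle s\rangle$ is normal (Lemma \ref{P7 normal}), hence $G$ fixes the root vertex and embeds faithfully into ${\rm Aut}(\Gamma_1)\le S={\rm Sym}(V(\Gamma_1))$ --- a point you justify more explicitly than the paper does, via the observation that every nontrivial element of $\langle s\rangle$ has the same unique fixed vertex. Where you diverge is at the end. The paper simply notes $G\le N_S(\langle s\rangle)$, computes $|N_S(\langle s\rangle)|=2^2\cdot 3\cdot 7^2$, and intersects this with the divisibility constraint $|G|\mid 3^3\cdot 7\cdot 11$ to get $|G|\mid 21$; no centralizer computation is needed. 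You instead compute $C_G(s)=\langle s\rangle$ and invoke the $N/C$ theorem to get $G/P_7\hookrightarrow{\rm Aut}(\mathbb Z_7)\cong\mathbb Z_6$, using oddness of $|G|$ twice (once to kill $t$ in the centralizer, once to cut $6$ down to $3$). Your route buys slightly more structure --- it exhibits $G$ as an extension of $\mathbb Z_7$ by a subgroup of order $1$ or $3$, which anticipates Corollary \ref{two groups}(b) --- at the cost of identifying which elements of $C_S(s)$ act as automorphisms of $\Gamma_1$. One small gap in the write-up: to conclude $C_S(s)\cap{\rm Aut}(\Gamma_1)=\langle s,t\rangle$ you must also dispose of the coset $\bigl(\langle s_L\rangle\times\langle s_R\rangle\bigr)t$, not only the translations; the same matching argument shows $s_L^{\,a}s_R^{\,b}t$ preserves $\Gamma_1$ only when $a=b$. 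Alternatively you can sidestep this entirely: since $C_G(s)$ has odd order it already lies in the index-$2$ subgroup $\langle s_L\rangle\times\langle s_R\rangle$ of $C_S(s)$, so only the translations need to be examined.
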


 \begin{proof}
  By our labeling scheme in Section \ref{prelims} and Remark  \ref{|s|=7}, it is clear that 
 $P_7=\langle s\rangle$   where $s=(1L,2L,\dots ,7L)\,(1R,2R,\dots ,7R)$. Furthermore, as $\langle s\rangle\trianglelefteq G$ by Lemma \ref{P7 normal}, we have that $G$ embeds in the normalizer $N_S(\langle s\rangle)$ where 
 $S={\rm Sym}(V(\Gamma_1))\cong S_{14}$.  Thus the order of $G$ must simultaneously divide   
 $|N_S(\langle s\rangle)|=2^2\cdot 3\cdot 7^2$ and $3^3\cdot 7\cdot 11$. Obviously this implies $|G|$ divides $21$.   
 \end{proof}
 
 \begin{coro}\label{two groups}
 \hspace*{-6mm}

\begin{enumerate}
 
\item[\rm (a)]  
If $\,2$ divides $|G|$, then $|G|$ divides $6$.
\item[\rm (b)] 
If $7$ divides $|G|$,  then $G$  is isomorphic to either $\,\mathbb Z_7$ or $\,Frob(21)$.

\end{enumerate}
\end{coro}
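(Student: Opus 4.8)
The plan is to treat the two parts separately, with part (a) following quickly from the tools already assembled and part (b) requiring one additional group-theoretic observation.

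For part (a), I would assume $2$ divides $|G|$, so that $G$ contains an involution $t$ by Cauchy's theorem. This places us in the hypothesis of \cite[Corollary 2.4]{Makhnev2}, whence either (i) $7$ divides $|G|$ and then $|G|$ divides $42$, or (ii) $|G|$ divides $6$. Alternative (i) is eliminated immediately: if it held, then $|G|$ would be divisible by both $2$ and $7$, hence by $14$, contradicting Proposition \ref{2 divides}. Therefore alternative (ii) must hold, giving $|G|\mid 6$ as desired.

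For part (b), I would assume $7$ divides $|G|$. Proposition \ref{if 7 divides} gives $|G|\mid 21$, so $|G|\in\{7,21\}$. If $|G|=7$ then $G\cong\mathbb Z_7$ trivially, so the substance lies in the case $|G|=21$. There are exactly two isomorphism types of groups of order $21$, namely $\mathbb Z_{21}$ and $Frob(21)$, and the task reduces to eliminating $\mathbb Z_{21}$. By Lemma \ref{P7 normal} the Sylow $7$-subgroup $P_7=\langle s\rangle$ is normal, and by Cauchy's theorem $G$ contains an element $a$ of order $3$; conjugation by $a$ then induces an automorphism of $\langle s\rangle\cong\mathbb Z_7$ of order dividing $3$. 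If this automorphism is trivial, then $\langle s\rangle$ and $\langle a\rangle$ commute and $G\cong\mathbb Z_7\times\mathbb Z_3\cong\mathbb Z_{21}$; if it is nontrivial (hence of order $3$), then $G\cong Frob(21)$.

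The crux---and the step I expect to carry the argument---is showing that $a$ cannot centralize $s$. Here I would invoke the embedding $G\hookrightarrow S={\rm Sym}(V(\Gamma_1))\cong S_{14}$ together with the centralizer already recorded in the proof of Lemma \ref{order 14 element}, namely $C_S(s)=(\langle s_L\rangle\times\langle s_R\rangle)\rtimes\langle t\rangle$. Since $s=s_Ls_R$ is a product of two disjoint $7$-cycles on the $14$-element set $V(\Gamma_1)$, its centralizer in $S_{14}$ has order $7^2\cdot 2!=2\cdot 7^2$, which is coprime to $3$. Consequently $C_S(s)$---and hence $C_G(s)$---contains no element of order $3$, so $a$ must act nontrivially on $\langle s\rangle$. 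This forces $G\cong Frob(21)$ and eliminates $\mathbb Z_{21}$, completing part (b). I do not anticipate real difficulty beyond correctly identifying the cycle type of $s$ and reading off $|C_S(s)|$; the only delicate point is ensuring that the divisibility bookkeeping in part (a) correctly couples Proposition \ref{2 divides} with the dichotomy of \cite{Makhnev2}.
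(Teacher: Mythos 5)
Your proposal is correct and follows essentially the same route as the paper: part (a) couples the divisibility-by-42 dichotomy with Proposition \ref{2 divides}, and part (b) reduces to $|G|\in\{7,21\}$ via Proposition \ref{if 7 divides} and rules out $\mathbb Z_{21}$ by observing that $3\nmid|C_{S_{14}}(s)|=2\cdot 7^2$, so no element of order $3$ can centralize $s$. The only difference is that you spell out the Sylow/Cauchy bookkeeping more explicitly than the paper does.
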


\begin{proof}
As divisibility by 2 implies $|G|$ divides 42, part (a) follows directly from Proposition \ref{2  divides}.  To prove (b),  
observe  by
Proposition \ref{if 7 divides} that $|G|\in\{7,21\}$. There are only two groups of order 21 up to isomorphism, namely $\mathbb Z_{21}$ and $Frob(21)$, thus we have only to rule out the former.  But 3 does not divide the order of the centralizer  $C_G(s)=(\langle s_L\rangle\times \langle s_R\rangle)\rtimes\langle t
\rangle\cong (\mathbb Z_7\times \mathbb Z_7)\rtimes \mathbb Z_2$ where   $S={\rm Sym}(V(\Gamma_1))$. Thus $G$ cannot contain any  element of order 21, \ie $G\not\cong\mathbb Z_{21}$.  
\end{proof}

In what follows, we gather detailed information about a putative Conway 99-graph $\Gamma$ under the assumption that 
$G\cong Frob(21)$.  
 Throughout, we assume $G=\langle s,r\rangle$ with $|r|=3$.    Since  $r$ normalizes $\langle s\rangle$ it is clear that $r$ fixes the root vertex $x$, hence $\Gamma_1$ and $\Gamma_2$ are preserved under the action of $G$.  Still, we have yet to  pin down the precise structure of $r$.  This is  remedied below. 

\begin{lemm}
With notation as above, we may assume 
\[r=(1L, 2L, 4L)\,(3L, 6L, 5L)\,(1R, 2R, 4R)\,(3R, 6R, 5R).\] 
\end{lemm}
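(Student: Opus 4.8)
The plan is to pin down the action of $r$ on the two $\langle s\rangle$-orbits of $\Gamma_1$ and then use the residual freedom in our labeling to normalize it. First I would record the easy structural facts. Since $r$ normalizes $\langle s\rangle$ and fixes the root $x$ (the unique $s$-fixed vertex, by Lemma \ref{7-orbits}), it permutes the two $\langle s\rangle$-orbits $\{1L,\dots,7L\}$ and $\{1R,\dots,7R\}$ of $\Gamma_1$ (\cf Remark \ref{|s|=7}); as $|r|=3$ is odd it cannot transpose them, and so fixes each setwise. Writing $rsr^{-1}=s^k$, the relation $|r|=3$ forces $k^3\equiv 1\pmod 7$, i.e.\ $k\in\{1,2,4\}$, and $k=1$ is excluded because $G\cong Frob(21)$ is nonabelian. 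Since $r^2$ also has order $3$, satisfies $\langle s,r^2\rangle=G$, and obeys $r^2sr^{-2}=s^{k^2}$ with $\{k,k^2\}=\{2,4\}$, I may replace $r$ by $r^2$ if necessary and thereby assume $rsr^{-1}=s^2$.

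Next I would determine $r$ on each orbit. Because $r$ stabilizes the $L$- and $R$-orbits and $s=s_Ls_R$ with $s_L,s_R$ of disjoint support, the identity $rsr^{-1}=s^2=s_L^{\,2}s_R^{\,2}$ splits as $rs_Lr^{-1}=s_L^{\,2}$ and $rs_Rr^{-1}=s_R^{\,2}$. Identifying $\{1L,\dots,7L\}$ with $\mathbb Z_7$ via $iL\leftrightarrow i$ so that $s_L\colon i\mapsto i+1$, the relation $\rho s_L\rho^{-1}=s_L^{\,2}$ for $\rho=r|_L$ unwinds to the recurrence $\rho(i+1)=\rho(i)+2$, whence $\rho$ is the affine map $i\mapsto 2i+c_L$ for some $c_L\in\mathbb Z_7$; likewise $r|_R\colon i\mapsto 2i+c_R$. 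Each such map indeed has order $3$ (matching $|r|=3$) and fixes a single point, so the induced cycle type on each orbit is $3+3+1$.

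Finally I would normalize the constants. The only edges inside $\Gamma_1$ are the pairs $\{iL,iR\}$, so the graph automorphism $r$ sends $\{iL,iR\}$ to $\{(2i+c_L)L,(2i+c_R)R\}$, which is again an edge only when $2i+c_L=2i+c_R$; hence $c_L=c_R=:c$. The conventions fixed so far ($s$ in standard form, with each $\{iL,iR\}$ an edge) are preserved exactly by conjugation by $\langle s\rangle$ (together with the flip $t$), and conjugating $r$ by $s^a$ replaces $(c_L,c_R)$ by $(c_L-a,c_R-a)$ while leaving $rsr^{-1}=s^2$ intact; taking $a=c$ yields $c_L=c_R=0$. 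Reading off the affine map $i\mapsto 2i$ on each orbit then produces the $3$-cycles $(1,2,4)$ and $(3,6,5)$ with $7$ fixed, i.e.\ precisely the claimed form of $r$.

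The step I expect to require the most care is the last: justifying that the permissible relabelings are exactly the conjugations by $\langle s\rangle$ (so that $c$ may be set to $0$ without disturbing either the standard form of $s$ or the edge convention), and confirming that the initial choice $rsr^{-1}=s^2$ in place of $s^4$ is harmless. Everything else reduces to the routine arithmetic of affine maps on $\mathbb Z_7$.
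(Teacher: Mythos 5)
Your argument is correct and takes essentially the same route as the paper: both identify the order-$3$ elements of the normalizer of $\langle s\rangle$ in $\mathrm{Sym}(V(\Gamma_1))$ (the paper by citing ``standard group theoretic arguments,'' you by solving $rsr^{-1}=s^k$ to obtain the affine maps $i\mapsto 2i+c$ on each $\langle s\rangle$-orbit), use preservation of the edges $\{iL,iR\}$ to force the two constants to coincide (the paper's condition $i=j$), and then normalize via conjugation by $\langle s\rangle$ and inversion. Your version simply fills in the enumeration of the $98$ order-$3$ elements that the paper leaves to the reader.
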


\begin{proof}
It is immediate that $r$ is an element in the normalizer $N_S(\langle s\rangle)$ in 
$S={\rm Sym}(V(\Gamma_1))$.  Applying standard group theoretic arguments, one deduces that there are 98  elements of order 3 in $N_S(\langle s\rangle)$ and these take the form 
\[\left(\big[(1L, 2L, 4L)\,(3L, 6L, 5L)\big]^{s_L^{\,i}} \,\big[(1R, 2R, 4R)\,(3R, 6R, 5R)\big]^{s_R^{\,j}}\right)^{\pm 1}\]
where $0\le i,j\le 6$. However, it is easily verified that such an element is an  automorphism of $\Gamma_1$  if and only if $i=j$. Thus, the 14 elements of order 3 in $G$ are precisely 
\[\big((1L, 2L, 4L)\,(3L, 6L, 5L)\,(1R, 2R, 4R)\,(3R, 6R, 5R)\big)^{\pm\, s^{\,i}}\]
from which the result follows.
\end{proof}

\begin{lemm}\label{1, 7^2, 21^4}
The $G$-orbit structure on $\Gamma$ is $[1, 7^2, 21^4]$. 
\end{lemm}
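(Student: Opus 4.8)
The plan is to exploit that $\langle s\rangle\trianglelefteq G$ (Lemma \ref{P7 normal}), so that every $G$-orbit is a union of $\langle s\rangle$-orbits and $r$ merely permutes the $\langle s\rangle$-orbits among themselves. By Lemma \ref{7-orbits} the $\langle s\rangle$-orbits on $\Gamma$ are $\{x\}$, the sets $L=\{1L,\dots,7L\}$ and $R=\{1R,\dots,7R\}$, together with twelve orbits of size $7$ lying inside $\Gamma_2$. The root $x$ is $G$-fixed, giving the singleton orbit. Since $r$ preserves each of $L$ and $R$ (its cycle structure acts identically on the left- and right-labels) while $s$ is already transitive on each, the sets $L$ and $R$ are $G$-orbits of size $7$; these furnish the part $7^2$.

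It remains to show that $\Gamma_2$ splits into four $G$-orbits of size $21$. Each such orbit is a union of size-$7$ $\langle s\rangle$-orbits, so its cardinality is a multiple of $7$; being also a divisor of $|G|=21$, it is either $7$ or $21$. A $G$-orbit of size $7$ would have point-stabilizer of order $3$, i.e.\ there would be an order-$3$ element of $G$ fixing a vertex of $\Gamma_2$. Thus the entire argument reduces to a single claim: \emph{no order-$3$ element of $G$ fixes a vertex of $\Gamma_2$.}

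To establish this I would first treat $r$ itself. From its explicit form, $r$ acts on the index set as $(1\,2\,4)(3\,6\,5)$, fixing the index $7$; hence on $\Gamma_1$ it fixes exactly $7L$ and $7R$. Now a vertex $ijXY\in V(\Gamma_2)$ is determined by the nonadjacent pair $\{iX,jY\}$ (with $i\ne j$), and since $r$ is an automorphism it fixes $ijXY$ if and only if $\{(iX)^r,(jY)^r\}=\{iX,jY\}$. If both factors are fixed then $i=j=7$, impossible; if $r$ interchanges them then $r^{2}$ fixes $iX$, forcing $iX\in\{7L,7R\}$ and hence $jY=(iX)^r=iX$, again impossible. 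So $r$ fixes no vertex of $\Gamma_2$. The same then holds for every order-$3$ element: each is $G$-conjugate to $r$ or to $r^{-1}$ (the fourteen order-$3$ elements being the $r^{\pm s^{i}}$ exhibited in the preceding lemma), the property of fixing a vertex of the $G$-invariant set $\Gamma_2$ is conjugation-invariant, and $\mathrm{Fix}(r)=\mathrm{Fix}(r^{-1})$. This proves the claim.

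Consequently no $G$-orbit in $\Gamma_2$ can have size $7$, so every one of them has size $21$; as $|\Gamma_2|=84=4\cdot 21$ there are exactly four such orbits. Together with $\{x\}$ and the orbits $L,R$ this yields the orbit structure $[1,7^2,21^4]$. The main obstacle is the fixed-point claim of the third paragraph—the label bookkeeping for $ijXY$ under $r$—while everything else follows formally once the normality of $\langle s\rangle$ is in hand. (Alternatively one could invoke Burnside's lemma, using $|\mathrm{Fix}(1)|=99$, $|\mathrm{Fix}|=1$ for the six order-$7$ elements, and $|\mathrm{Fix}|=3$ for the fourteen order-$3$ elements, to obtain $\tfrac1{21}(99+6+42)=7$ orbits and then separate them by size; but the structural route above is the more direct one.)
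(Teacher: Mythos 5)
Your proof is correct, and for the crux of the lemma it takes a genuinely different route from the paper. Both arguments agree on the easy parts ($\{x\}$ is fixed; the two $\langle s\rangle$-orbits on $\Gamma_1$ are $G$-orbits; every $G$-orbit on $\Gamma_2$ has size $7$ or $21$, and size $7$ forces an order-$3$ element with a fixed vertex in $\Gamma_2$). Where you diverge is in ruling out that fixed vertex. The paper argues abstractly: if $g$ of order $3$ fixes $ijXY$, then (odd order) $g$ fixes both $\Gamma_1$-neighbors, hence fixes $iL$ and $jL$ with $i\ne j$; writing $iL=jL^{z}$ with $1\ne z\in\langle s\rangle$ and using normality of $\langle s\rangle$ together with its free action on $\Gamma_1\setminus\{x\}$, one gets $[z,g]=1$, impossible in $Frob(21)$. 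You instead reduce by conjugacy to the single element $r$ (using the classification of the fourteen order-$3$ elements as $r^{\pm s^{i}}$ from the preceding lemma and $\mathrm{Fix}(r)=\mathrm{Fix}(r^{-1})$) and then do explicit label bookkeeping with the cycle structure of $r$, whose only fixed points on $\Gamma_1$ are $7L,7R$; your case analysis (both coordinates fixed versus swapped, the swap handled via $r^{2}$) is sound. The trade-off: the paper's commutator argument needs no explicit knowledge of $r$ beyond the facts that $\langle s\rangle\trianglelefteq G$ and $G$ is nonabelian, so it is more robust and portable; yours is more concrete and elementary to verify, but is tied to the explicit determination of $r$, and it also handles all order-$3$ elements at once in a clean way via conjugation-invariance. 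Your Burnside aside is consistent with the conclusion but, as you note, does not by itself pin down the orbit sizes.
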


\begin{proof}
As  21 does not divide 14, the two $\langle s\rangle$-orbits on $\Gamma_1$ cannot fuse under the action of $G$.  Thus the $G$-orbit structure on $\Gamma_1$ is $[7^2]$.

Now let $\mathcal O$ be an arbitrary $G$-orbit on $\Gamma_2$. 
We claim $G$ acts regularly on $\mathcal O$ from which the desired result will follow.  To this end, suppose $ijXY \in \mathcal O$ is fixed by some $g\in G$, 
 where $X, Y \in \{L, R\}$.  It follows that $|g|=3$, since $s$ has no fixed points in $\Gamma_2$.  As $\lambda =1$, we have $i\ne j$. 
Thus since $\mu=2$, $g$ must either fix or interchange the two 
$\Gamma_1$-neighbors of $ijXY$, \ie  
$\{iX^g,jY^g\}=\{iX,jY\}$. But as $G$ has odd order, these vertices must be fixed by $g$, that is, $iX^g=iX$ and $jY^g=jY$.  As $g$ preserves adjacency, we must also have 
$(iX^C)^g=iX^C$ and  $(jY^C)^g=jY^C$ where $\{X,X^C\}=\{Y,Y^C\}=\{L,R\}$.
In every instance, we obtain $iL^g=iL$ and $jL^g=jL$.  By  transitivity of  $\langle s\rangle$ on the orbit containing $iL$, we get $iL=jL^z$ for some $z\in \langle s\rangle$.  This gives $jL^{[z,g]}=iL^{g^{\,-1}zg}=iL^{zg}=jL^{g}=jL$, \ie $jL$ is a fixed point of $[z,g]\in \langle s\rangle$. But this implies $[z,g]=1$, a contradiction since $G$ is nonabelian.
We conclude that the $G$-orbit structure on $\Gamma_2$ is $[21^4]$ as claimed.
\end{proof}

\begin{rema}
Now that we understand the orbit structure of $G$ on $\Gamma_2$, it is an easy matter to determine a corresponding set of orbit representatives, viz.\ $\{12LL, 12RR, 12LR, 12RL\}$.  For brevity we shall denote these orbits as $LL, RR, LR, RL$, respectively. We indicate the corresponding $G$-orbit partition of $\,\Gamma_2$ in Figure \ref{general OP21}.
\end{rema}

\begin{figure}
\begin{tikzpicture}
\draw[thick] (0,0) circle (6mm) node at (0,0) {\Large$b_1$}  node at (0,-.9) {$LL$};
\draw [thick](3,0) circle (6mm) node at (3,0) {\Large$b_2$}  node at (3,-.9) {$RR$};
\draw [thick](0,3) circle (6mm) node at (0,3) {\Large$b_3$}  node at (0,3.9) {$LR$};
\draw[thick] (3,3) circle (6mm) node at (3,3) {\Large$b_4$}  node at (3,3.9) {$RL$};
 
\draw (.6,0)--(2.4,0) node at (1.54,-.27) {\large$b_{12}$};
\draw (.6,3)--(2.4,3) node at (-.33,1.5) {\large$b_{13}$};
\draw (0,.6)--(0,2.4) node at (3.36,1.5) {\large$b_{24}$};
\draw (3,.6)--(3,2.4) node at (1.5,3.27) {\large$b_{34}$};
\draw (.4,.4)--(2.6,2.6) node at (.64,1.1) {\large$b_{14}$};
\draw (.4,2.6)--(2.6,.4) node at (2.34,1.1) {\large$b_{23}$};

\end{tikzpicture}
\caption{General form of an $G$-orbit partition of $\Gamma_2$}\label{general OP21}
\end{figure}
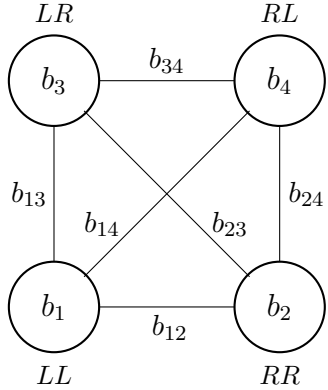

For $ijXY\in V(\Gamma_2)$, we refer to $iX$ and $jY$   as its  coordinates. In the result that follows, we demonstrate a  manner in which the 
 coordinates of the 12 $\Gamma_2$-neighbors of a fixed vertex in $\Gamma_2$ are balanced.  

\begin{lemm}\label{balance} Let $ijXY$ be a fixed but arbitrary vertex in $\Gamma_2$  and consider collectively the $24$ coordinates appearing among its $12$ $\Gamma_2$-neighbors. Then the following hold: 
\begin{enumerate}
\item Each of $iL,iR,jL,jR$ appears exactly once. 
\item Each of $kL, kR$ appears exactly twice for each $k\ne i,j$. 
\item Each of $L$ and $R$ appears $12$ times, \ie $\Gamma_2$-neighbors of a vertex in  $\Gamma_2$  are $L/R$-balanced.
\end{enumerate} 
\end{lemm}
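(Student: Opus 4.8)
The plan is to deduce all three parts from one bookkeeping principle. Since every vertex of $\Gamma_2$ has exactly two $\Gamma_1$-neighbors and these are precisely its two coordinates, a fixed coordinate $aZ$ (with $1\le a\le 7$, $Z\in\{L,R\}$) occurs among the coordinates of the $\Gamma_2$-neighbors of $v=ijXY$ exactly as many times as there are common neighbors of $v$ and $aZ$ that lie in $\Gamma_2$. Thus for each value of $aZ$ I would first count the common neighbors of $v$ and $aZ$ using $\lambda=1$ or $\mu=2$, and then decide how many of them land in $\Gamma_2$. The distance distribution diagram is the key input here: every $\Gamma_1$-vertex $aZ$ has neighbor set consisting of the root $x$, its $L/R$-partner $aZ^C$, and twelve vertices of $\Gamma_2$; and $x$ is never a common neighbor of $v$ with anything, since $x$ is not adjacent to $v$.

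Before the case analysis I would record three elementary facts: the two $\Gamma_1$-neighbors of $v=ijXY$ are exactly $iX$ and $jY$; any partner pair $\{aL,aR\}$ is an edge; and $iX^C, jY^C$ are \emph{not} adjacent to $v$ (each coincides with neither $iX$ nor $jY$ because $i\ne j$). I would then split into cases on $aZ$. \textbf{Case 1} ($aZ\in\{iX,jY\}$): here $v$ and $aZ$ are adjacent, so $\lambda=1$ gives a single common neighbor; it is neither $x$ nor the partner $aZ^C$, hence it lies in $\Gamma_2$, contributing one occurrence each of $iX$ and $jY$. \textbf{Case 2} ($aZ\in\{iX^C, jY^C\}$): now $v$ and $aZ$ are nonadjacent, so $\mu=2$ gives two common neighbors; one of them is the $\Gamma_1$-vertex $iX$ (resp.\ $jY$)—the partner of $aZ$, which is also adjacent to $v$—while the second is forced into $\Gamma_2$. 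This contributes one occurrence each of $iX^C$ and $jY^C$, so together with Case 1 it establishes part (1).

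\textbf{Case 3} ($a=k\notin\{i,j\}$): $v$ and $kZ$ are again nonadjacent, so there are two common neighbors, and neither $x$ nor the partner $kZ^C$ qualifies (the latter is not adjacent to $v$ since $k\ne i,j$), so both common neighbors lie in $\Gamma_2$; this gives two occurrences of each of $kL$ and $kR$, which is part (2). Finally, part (3) follows by counting: parts (1) and (2) exhaust all $24$ coordinate-occurrences, and among them $iL, jL$ together with the ten coordinates $kL$ ($k\ne i,j$, each counted twice) are the $L$-coordinates, totalling twelve, and symmetrically twelve $R$-coordinates. I do not anticipate a serious obstacle; the only point requiring care is the repeated verification that the two \emph{spurious} candidates for a common neighbor—namely the root $x$ and the appropriate $L/R$-partner—are excluded in each case, which is exactly where the hypotheses $i\ne j$ and $k\ne i,j$ enter.
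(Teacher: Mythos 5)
Your proposal is correct and follows essentially the same route as the paper: both arguments run the same three-way case analysis on a coordinate $aZ$ (the two $\Gamma_1$-neighbors $iX,jY$ via $\lambda=1$; their partners $iX^C,jY^C$ via $\mu=2$ with one common neighbor absorbed by $\Gamma_1$; and $kZ$ for $k\ne i,j$ via $\mu=2$ with both common neighbors forced into $\Gamma_2$), then deduce (3) by counting. Your explicit ``occurrences of $aZ$ equal common neighbors of $v$ and $aZ$ in $\Gamma_2$'' bookkeeping principle is just a cleaner packaging of what the paper does implicitly, and your exclusion of the spurious candidates $x$ and $aZ^C$ is handled correctly.
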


\begin{proof}
As in Lemmas \ref{orbits} and \ref{1, 7^2, 21^4} we let 
$\{X,X^C\}=\{Y,Y^C\}=\{L,R\}$. We must show that
each of $iX,jY$ and $iX^C,jY^C$ occur exactly once as coordinates of $\Gamma_2$-neighbors of $iXjY$.  We treat the pair $iX,jY$ first. 
Since $\lambda=1$ and $ijXY$ is adjacent to $iX\in V(\Gamma_1)$, there is a unique vertex in $\Gamma_2$ which is their common neighbor. Evidently this vertex is of the form $i\ell XW$ for some $W\in\{L,R\}$ with $\ell W\ne jY$. Thus $iX$ occurs exactly once as a coordinate of a $\Gamma_2$-neighbor of $ijXY$ and by a symmetric argument the same result holds for $jY$.  

We next treat the pair $iX^C,jY^C$. 
Observe that since $\lambda=1$, we have that $ijXY$ and $iX^C$ are nonadjacent.  Since $\mu=2$, the vertices $ijXY$ and $iX^C$ must have a unique common neighbor $i\ell X^CW\in V(\Gamma_2)$ with $iX$ being their second common neighbor. Thus $iX^C$ occurs exactly once as a  coordinate of a $\Gamma_2$-neighbor of $ijXY$ with a similar result holding for $jY^C$.  As $\{iX,iX^C\}=\{iL,iR\}$ and $\{jY,jY^C\}=\{jL,jR\}$, assertion (1) is proved. 

Now let $k\ne i,j$ and $W\in\{L,R\}$. Since $\mu=2$ and $kW$  is nonadjacent to 
$ijXY$,  there are exactly two vertices in $\Gamma_2$ that 
are common neighbors of $kW$ and $ijXY$. Obviously, $kW$ appears as a coordinate in each such neighbor, so twice in total. Thus assertion (2) is proved. 
Finally, observe that (3) follows directly from (1) and (2).   
\end{proof}

As a consequence of Lemma \ref{balance}(3), we have the following.

\begin{lemm}
 With notation as in Figure \ref{general OP21}, $b_1=b_{12}=b_2$, 
$\,b_{13}=b_{23}\,$ and $\,b_{14}=b_{24}$.\label{more relations} 
\end{lemm}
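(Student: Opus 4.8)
The plan is to read off the three equalities from the $L/R$-balance of Lemma~\ref{balance}(3) by a single double count of coordinates, performed once from inside each of the four orbits. The crucial preliminary observation is that every element of $G$ fixes the symbols $L$ and $R$ (it permutes only the numerical indices, and does so within each of the two $\langle s\rangle$-orbits of $\Gamma_1$ separately), so the $L/R$-pattern of the coordinates of a vertex of $\Gamma_2$ is constant on $G$-orbits. Concretely: every vertex in $LL$ has two $L$-coordinates, every vertex in $RR$ has two $R$-coordinates, and every vertex in each mixed orbit $LR$ and $RL$ has exactly one coordinate of each type. The labeling symmetry $ijXY=jiYX$ is harmless here, since it only reorders which coordinate is listed first and never changes how many $L$- or $R$-coordinates a vertex possesses.

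First I would fix a vertex $v$ in the orbit $LL$ and tally the $24$ coordinates appearing among its $12$ $\Gamma_2$-neighbors, grouped by the orbit in which each neighbor lies (cf.\ Figure~\ref{general OP21}). A neighbor in $LL$ contributes two $L$-coordinates, a neighbor in $RR$ contributes none, and a neighbor in $LR$ or $RL$ contributes exactly one. Hence the number of $L$-coordinates is $2b_1+b_{13}+b_{14}$ and the number of $R$-coordinates is $2b_{12}+b_{13}+b_{14}$. By Lemma~\ref{balance}(3) both totals equal $12$, and subtracting gives $b_1=b_{12}$.

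Next I would run the identical count for a fixed vertex in $RR$, using $b_{21}=b_{12}$ for the number of its $LL$-neighbors; comparing the $L$- and $R$-coordinate totals $2b_{12}+b_{23}+b_{24}=12$ and $2b_2+b_{23}+b_{24}=12$ yields $b_{12}=b_2$, which together with the previous step establishes $b_1=b_{12}=b_2$. The same double count from a vertex in the mixed orbit $LR$ produces $2b_{13}+b_3+b_{34}=2b_{23}+b_3+b_{34}$, hence $b_{13}=b_{23}$, and from a vertex in $RL$ produces $2b_{14}+b_{34}+b_4=2b_{24}+b_{34}+b_4$, hence $b_{14}=b_{24}$.

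I do not expect a genuine obstacle; the one point demanding care is the bookkeeping of how many $L$- (respectively $R$-) coordinates each neighbor contributes as a function of its orbit, together with the justification that this contribution really is orbit-invariant despite the symmetry $ijXY=jiYX$. Once that invariance is recorded, each of the three claimed equalities drops out of a single subtraction of the two balance equations.
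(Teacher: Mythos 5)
Your proof is correct and takes essentially the same approach as the paper: it applies the $L/R$-balance of Lemma~\ref{balance}(3) to a fixed vertex in each of the four orbits and cancels the contributions of the mixed orbits $LR$ and $RL$, whose vertices carry one coordinate of each type. The paper phrases this cancellation as simply ``ignoring'' the neighbors in $LR$ and $RL$ rather than writing out the explicit coordinate sums, but the underlying double count is identical.
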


\begin{proof}
We  apply Lemma \ref{balance}(3) to vertices in $LL$, $RR$, $LR$, $RL$ in that order. Prefatory to this, note that the coordinates of vertices in $LR$ and $RL$ have a natural $L/R$-balance built into them. This means we may safely ignore edges that adjoin any vertex of $\Gamma_2$ to vertices in either of  these two orbits. 

Let $u$ be a fixed vertex in $LL$.  As $u$ has valency $b_1$ in $LL$, it must have $b_1$ neighbors in $RR$ in order to restore $L/R$-balance.  This proves 
$b_{12}=b_1$. By a similar argument based on $RR$, we have  $b_{12}=b_2$.  
Now let $u$ be a vertex in $LR$. Clearly, every neighbor of $u$ in $LL$ must be reciprocated by a neighbor in $RR$ to maintain balance. This proves $b_{13} = b_{23}$. Applying this  argument to vertices in $RL$ now yields
$b_{14}=b_{24}$. \end{proof}

To further narrow down possible orbit valencies, we adopt the approach  used in Section \ref{no order 14 auts}. That is to say, we analyze
 2-paths between vertices from pairs of orbits.  Let $u$ be a fixed vertex in the orbit $LL$.  We wish to count in  
two ways the cardinality of the following set:
\[S = \{uvw: \text{$uvw$ is a 2-path with } w \in RR\}.\]  
Observe that for each of the $b_1$ neighbors of $u$ in $RR$ there is a unique 2-path in $S$ (since $\lambda =1$), while for each of the $21-b_1$ non-neighbors of $u$ in $RR$ there are two such paths (since $\mu=2$).   
This gives $|S|=b_1\cdot 1 + (21-b_1)\cdot 2=42-b_1$. 

We next condition our count on the location of the intermediate vertex $v$.  Here we rely  heavily on Lemma \ref{more relations}.  

If $v$ lies in $LL$, there are $b_1$ choices for $v$ followed by 
$b_1$ independent choices for $w$.  This gives a total of $b_1^2$ paths in $S$ assuming the intermediate vertex $v$ is in $LL$.  Note that for $v$ in $RR$ the count is identical, \ie there are $b_1^2$ paths in $S$ assuming  $v$ is in $RR$.

Now suppose $v$ is in the orbit $LR$.  Since $u$ has $b_{13}$ neighbors in $LR$ and each such vertex has $b_{13}$ neighbors in $RR$, the total number of paths in this case is $b_{13}^2$.  By a similar argument the number of paths with $v$ in $RL$ is $b_{14}^2$.  Finally, we observe that there is no 2-path  with intermediate vertex $v$ in $V(\Gamma_1)$. Indeed, this would require that $v=iL$ and $v= jR$ for some $i,j$, which is absurd.  Thus we obtain $|S|=2b_1^2 + b_{13}^2+b_{14}^2$.

Equating the above two expressions for $|S|$ gives
$42-b_1= 2b_1^2 + b_{13}^2+b_{14}^2$ or equivalently  
\begin{equation} 42-b_1-2b_1^2 = b_{13}^2+b_{14}^2.\end{equation}
The only integral solution to (3) is $b_1=2$, $b_{13}=b_{14}=4$ which, by virtue of Lemma \ref{more relations}, narrows down all possible orbit valencies to the ones indicated in 
Figure \ref{updated relations}.

\begin{figure}
\begin{tikzpicture}
\draw[thick] (0,0) circle (6mm) node at (0,0) {\Large$2$}  node at (0,-.9) {$LL$};
\draw [thick](3,0) circle (6mm) node at (3,0) {\Large$2$}  node at (3,-.9) {$RR$};
\draw [thick](0,3) circle (6mm) node at (0,3) {\Large$b_3$}  node at (0,3.9) {$LR$};
\draw[thick] (3,3) circle (6mm) node at (3,3) {\Large$b_3$}  node at (3,3.9) {$RL$};
 
\draw (.6,0)--(2.4,0) node at (1.5,-.27) {\large$2$};
\draw (.6,3)--(2.4,3) node at (-.2,1.5) {\large$4$};
\draw (0,.6)--(0,2.4) node at (3.25,1.5) {\large$4$};
\draw (3,.6)--(3,2.4) node at (1.5,3.25) {\large$4- b_{3}$};
\draw (.4,.4)--(2.6,2.6) node at (.68,1.1) {\large$4$};
\draw (.4,2.6)--(2.6,.4) node at (2.32,1.1) {\large$4$};

\end{tikzpicture}
   \caption{Narrowing down the valencies of a $G$-orbit  partition of $\Gamma_2$}\label{updated relations}
\end{figure}
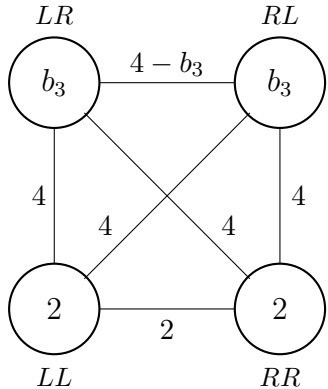

We are nearly at the point of determining the unique orbit structure of $G\cong Frob(21)$ acting on the second subconstituent $\Gamma_2$ of a putative Conway 99-graph $\Gamma$.   To complete the process, we count in two ways the number of 2-paths originating at a fixed vertex $u$ in $LR$ and ending at some vertex in $RL$.

One one hand, there are $4-b_{3}$ edges from $u$ to some vertex $w$ in $RL$, and in each case there is  a unique 2-path $uvw$ since $\lambda =1$. Similarly, for each of the $21-(4-b_{3})=17+b_3$ vertices in $RL$ nonadjacent to $v$ there are two distinct 2-paths of required form.  This gives a total of $(4-b_{3})\cdot1+ (17+b_{3})\cdot 2=38+b_{3}$ such 2-paths.  

We next focus our count on the location of the intermediate vertex $v$.  If $v$ is in either of $LL$ or $RR$, there are $4\cdot 4=16$
 such 2-paths, while if $v$ is in either of $LR$ or $RL$ there are 
 $(4-b_3)b_3 = 4b_3 - b_3^{\,2}$ such 2-paths. 
 Lastly, we consider $v\in V(\Gamma_1)$.  
Since $u$ is in $LR$, it has coordinates $ijLR$ for some $i\ne j$ whence its two 
$\Gamma_1$-neighbors are $iL$ and $jR$.  Note that the 12 neighbors of $iL$ in $\Gamma_2$ are of the form $ikLL$ and $ikLR$ ($=kiRL$) where $k\ne i$.  Thus six neighbors of $iL$ lie in $LR\cup RL$. However, these six neighbors are divided evenly between $LR$ and $RL$, since $ikLR$ is in $LR$ if and only if $ikRL$ is in $RL$.
Thus $iL$ has three neighbors in $LR$ and by a symmetric argument the same holds for $jR$.  This gives six more 2-paths starting at $u$ and terminating at some vertex in $RL$.  Thus the total number of such 2-paths  is  $2(16)+ 2(4b_3-b_3^2) +6=38+8b_3-2b_3^{\,2}$ via this second count.

Equating the two counts yields 
$38+b_3=38+8b_3-2b_3^{\,2}$ which simplifies to $2b_3^{\,2}-7b_3=0$.
Clearly, the only integral solution is $b_3=0$ which gives us the following.

\begin{prop}\label{unique OP21}
The unique $G$-orbit partition of $\,\Gamma_2$ where $G\cong Frob(21)$ is as indicated in Figure \ref{final OP21}.
\end{prop}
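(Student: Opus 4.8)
The plan is to show that every orbit valency appearing in the general partition of Figure~\ref{general OP21} is forced to a unique value; since the orbit structure on $\Gamma_2$ is already known to be $[21^4]$ (Lemma~\ref{1, 7^2, 21^4}), fixing all valencies pins down the partition completely. By Lemma~\ref{more relations} we have $b_1=b_{12}=b_2$, $b_{13}=b_{23}$ and $b_{14}=b_{24}$, and the $2$-path count between $LL$ and $RR$ that produced equation~(3) already forces $b_1=2$ and $b_{13}=b_{14}=4$, as recorded in Figure~\ref{updated relations}. Hence the single remaining unknown is the common internal valency $b_3$ of the orbits $LR$ and $RL$, with the cross-valency $b_{34}=4-b_3$ determined by the requirement that each orbit have total $\Gamma_2$-degree $12$. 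It therefore suffices to prove $b_3=0$.

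To determine $b_3$ I would carry out one further double count, namely of the $2$-paths $uvw$ with $u$ a fixed vertex of $LR$ and $w\in RL$. Counting by adjacency of $u$ to $w$ and invoking $\lambda=1$ and $\mu=2$ gives $(4-b_3)\cdot 1+(17+b_3)\cdot 2=38+b_3$ such paths. Counting instead by the location of the intermediate vertex $v$: the cases $v\in LL$ and $v\in RR$ each contribute $4\cdot4=16$ paths, the cases $v\in LR$ and $v\in RL$ each contribute $b_3(4-b_3)$, and the case $v\in V(\Gamma_1)$ must be handled separately. Provided the $\Gamma_1$-contribution equals $6$, the second total is $38+8b_3-2b_3^{\,2}$, and equating the two counts yields $2b_3^{\,2}-7b_3=0$. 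As $b_3$ is a nonnegative integer and $7/2$ is not one, the only admissible solution is $b_3=0$, whence $b_{34}=4$ and the partition is the asserted one in Figure~\ref{final OP21}.

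The step I expect to require the most care is the $\Gamma_1$-contribution to the second count. Writing $u=ijLR$, its two $\Gamma_1$-neighbors are $iL$ and $jR$, and I would need that each of these has exactly three $\Gamma_2$-neighbors lying in $RL$. The delicate point is that the $\Gamma_2$-neighbors of $iL$ are precisely the vertices $ikLL$ and $ikLR$ with $k\ne i$, and that the label identity $ikLR=kiRL$ forces the six vertices $ikLR$ to split evenly between the orbits $LR$ and $RL$; correctly assigning each such vertex to its orbit, without double-counting any of them, is where an arithmetic slip could most easily occur. Everything else---the reduction via Lemma~\ref{more relations}, the two path counts, and the final arithmetic solving $2b_3^{\,2}-7b_3=0$---is routine once this bookkeeping is pinned down.
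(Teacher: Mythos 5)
Your proposal is correct and matches the paper's own argument essentially step for step: both reduce the problem to determining $b_3$ via Lemma~\ref{more relations} and equation~(3), then force $b_3=0$ by double-counting $2$-paths from a fixed vertex of $LR$ into $RL$, with the $\Gamma_1$-contribution of $6$ justified exactly as you describe (each of $iL$ and $jR$ has three $\Gamma_2$-neighbors in $RL$ because the six vertices $ikLR=kiRL$ split evenly between $LR$ and $RL$). No substantive difference from the paper's proof.
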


\begin{figure}
\begin{tikzpicture}
\draw[thick] (0,0) circle (6mm) node at (0,0) {\Large$2$}  node at (0,-.9) {$LL$};
\draw [thick](3,0) circle (6mm) node at (3,0) {\Large$2$}  node at (3,-.9) {$RR$};
\draw [thick](0,3) circle (6mm) node at (0,3) {\Large$0$}  node at (0,3.9) {$LR$};
\draw[thick] (3,3) circle (6mm) node at (3,3) {\Large$0$}  node at (3,3.9) {$RL$};
 
\draw (.6,0)--(2.4,0) node at (1.5,.25) {\large$2$};
\draw (.6,3)--(2.4,3) node at (-.2,1.5) {\large$4$};
\draw (0,.6)--(0,2.4) node at (3.25,1.5) {\large$4$};
\draw (3,.6)--(3,2.4) node at (1.5,3.25) {\large$4$};
\draw (.4,.4)--(2.6,2.6) node at (.68,1.1) {\large$4$};
\draw (.4,2.6)--(2.6,.4) node at (2.32,1.1) {\large$4$};

\end{tikzpicture}
   \caption{The unique $G$-orbit partition of $\Gamma_2$}  \label{final OP21}
\end{figure}
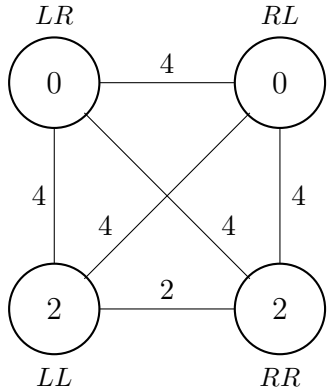

\begin{rema}
Observe that each of $LR\cup\{x\}$ and $RL\cup\{x\}$ is a  coclique of size 22.  This is in fact the largest independence number   allowable by the Hoffman Ratio Bound (aka Hoffman-Delsarte inequality), see \cite{haemers}. Further note from Figure \ref{final OP21} that every vertex outside of $LR\cup\{x\}$ (\resp $RL\cup\{x\}$) has  precisely four neighbors in $LR\cup\{x\}$ (\resp $RL\cup\{x\}$).
\end{rema}

\section{Narrowing the search space}\label{several results}

To this point, we have 
established that if $|G|$ is divisible by 7, then either $G\cong\mathbb Z_7$ or $G\cong Frob(21)$.  At present, we are unable to eliminate either of these two groups in a computer-free manner.   In this section, we gather sufficient structural information about $\Gamma$ that will allow us to prove $G\not\cong Frob(21)$ with
 the aid of a computer.  We would of course welcome independent verification of this fact, and we'd be delighted if a computer-free proof could be furnished. 
 
Let us call a 3-cycle $u_1u_2u_3$ \emph{of type $(X_1Y_1,X_2Y_2,X_3Y_3)$} provided 
$u_i$ is in the orbit $X_iY_i$ where $X_i,Y_i\in\{L,R\}$ for $1\le i\le 3$. 

\begin{prop}\label{interior 3-cycles}
Each of $LL$ and $RR$ consists of seven vertex-disjoint $3$-cycles. 
\end{prop}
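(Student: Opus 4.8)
The goal is to show that the orbit $LL$ (and by symmetry $RR$) is a disjoint union of seven $3$-cycles. My plan is to use the $Frob(21)$-action together with the valency data from Figure~\ref{final OP21}, where $LL$ has internal valency $b_1=2$. Since $|LL|=21$ and each vertex has exactly two neighbors inside $LL$, the induced subgraph on $LL$ is $2$-regular, hence a disjoint union of cycles whose lengths sum to $21$. The entire burden of the proof is therefore to show that every one of these cycles is a $3$-cycle.

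First I would exploit the $\langle s\rangle$-action. Recall $s$ acts on $LL$ with orbit structure $[7^3]$ (three $7$-orbits, since $|LL|=21$ and $s$ is fixed-point-free on $\Gamma_2$ by Lemma~\ref{7-orbits}). The element $r$ of order $3$ permutes these three $7$-orbits. Because the induced subgraph on $LL$ is a $2$-regular graph preserved by $G$, the cycle decomposition is $G$-invariant, and I would argue that $G$ must act on the set of cycles in a way compatible with $|G|=21$ and $|LL|=21$. Concretely, I would examine a fixed vertex such as $12LL$ and identify its two $LL$-neighbors explicitly using Lemma~\ref{balance}: its $\Gamma_2$-neighbors have coordinates balanced according to Lemma~\ref{balance}(1),(2), and the two that land in $LL$ must carry the coordinates $iL,jL$ (here $1L,2L$) appropriately. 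Tracing adjacencies through the labeling scheme, I expect each $LL$-neighbor of $12LL$ to share exactly one numerical coordinate with it, forcing a $3$-cycle via the $\lambda=1$ condition.

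The cleanest route is probably to show directly that the two $LL$-neighbors of any vertex are themselves adjacent, i.e.\ that $b_1=2$ together with $\lambda=1$ forces the neighbors to close up into a triangle. Here is the key observation I would pursue: a fixed vertex $v\in LL$ has its two $\Gamma_1$-neighbors, say $iL$ and $jL$ (since $v=ijLL$). By Lemma~\ref{140 triangles} every vertex of $\Gamma_2$ lies on exactly five $3$-cycles inside $\Gamma_2$. I would count how many of these five triangles through $v$ are forced to lie inside $LL$ versus crossing into other orbits, using the valencies $b_{12}=2,\ b_{13}=b_{14}=4$ from Figure~\ref{final OP21} and the fact that $LR,RL$ have internal valency $0$. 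Combining the $\lambda=1$ edge-in-unique-triangle condition on the two internal edges at $v$ with the absence of internal edges in $LR,RL$, I would conclude that both internal edges at $v$ sit in triangles wholly inside $LL$, which forces the two $LL$-neighbors of $v$ to be mutually adjacent.

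The main obstacle I anticipate is ruling out longer cycles: a priori the $2$-regular graph on $LL$ could be, say, a single $21$-cycle or a $7$-cycle plus a $14$-cycle, all consistent with $s$-invariance. The triangle-closure argument above, if it goes through uniformly for \emph{every} vertex, immediately kills these possibilities, since a vertex on a cycle of length $>3$ has non-adjacent cycle-neighbors. The delicate point is verifying that the $\lambda=1$ triangle on each internal edge cannot escape $LL$ into a neighboring orbit; this is where I would lean on the vanishing internal valencies of $LR$ and $RL$ and the explicit coordinate bookkeeping of Lemma~\ref{balance}, checking that the unique common neighbor guaranteed by $\lambda=1$ must again be an $LL$-vertex. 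Once triangle-closure is established at one vertex, $G$-invariance (transitivity of $G$ on $LL$) propagates it to all $21$ vertices, yielding seven vertex-disjoint $3$-cycles.
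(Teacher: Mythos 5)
The decisive step of your argument --- that for each $v\in LL$ the unique triangle on an internal edge $vu$ ($u\in LL$) must stay inside $LL$, so that the two $LL$-neighbours of $v$ are adjacent --- is not actually established, and the mechanism you propose for it does not work. The vanishing internal valencies of $LR$ and $RL$ only forbid a triangle from containing \emph{two} vertices of $LR$ (resp.\ $RL$); they say nothing about a triangle whose base edge lies in $LL$ and whose apex is a single vertex of $LR$, $RL$ or $RR$, all of which are consistent with the valencies $b_{12}=2$, $b_{13}=b_{14}=4$ of Figure \ref{final OP21}. Likewise Lemma \ref{balance} only locates the apexes of the two triangles through $v$ that meet $\Gamma_1$; it does not force the apex of the triangle on an internal edge into $LL$. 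Your group-theoretic thread fares a little better than you suggest: since $G$ acts regularly on $LL$ (Lemma \ref{1, 7^2, 21^4}) and $G$ has no involutions, the two internal neighbours of $v$ are $v^{p},v^{p^{-1}}$ for a single $p\in G$, so the induced graph on $LL$ is a union of cycles of length $|p|\in\{3,7\}$ ($Frob(21)$ has no element of order $21$); but nothing in your sketch excludes $|p|=7$, i.e.\ three $7$-cycles, and that is exactly the case that must be killed.

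The paper closes this gap globally rather than locally. By regularity of $G$ on each $\Gamma_2$-orbit, a $G$-orbit of $3$-cycles has size $21$ unless the stabilizer of a triangle has order $3$ and cyclically permutes its vertices, which forces all three vertices into a single vertex-orbit; since $LR$ and $RL$ are cocliques, only triangles of type $(LL,LL,LL)$ or $(RR,RR,RR)$ can lie in orbits of size $7$. Writing $7n_7+21n_{21}=140$ for the total count of Lemma \ref{140 triangles}, and noting $n_7\le 2$ because each of $LL$, $RR$ has only $21$ internal edges (hence at most seven internal triangles, by $\lambda=1$), the congruence $n_7\equiv 20\pmod 3$ forces $n_7=2$, $n_{21}=6$. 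Thus each of $LL$ and $RR$ carries seven internal triangles using all $21$ of its internal edges, and internal valency $2$ makes them vertex-disjoint. If you want to salvage a proof along your lines, you would need some such global count (or another argument) to exclude the three-$7$-cycles configuration; the local triangle-closure claim cannot be read off from the orbit valencies alone.
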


\begin{proof}
By transitivity of $G$ on its orbits, each orbit of 3-cycles in $\Gamma_2$ has size 21 with the exception being 3-cycles of type $(LL,LL,LL)$ or $(RR,RR,RR)$. Indeed, orbits of 3-cycles of these two types would have to be of size 7 since the 3-cycles remain internal to their respective orbits.  As there are a total of 140 3-cycles in $\Gamma_2$ (\cf Lemma \ref{140 triangles}), their division into $n_7$ orbits of size 7 and $n_{21}$ orbits of size 21 must satisfy
$7n_7+21n_{21}=140$. Moreover, $0\le n_7\le 2$ since each of $LL$ and $RR$ has internal valency 2.  Since the only integral solution to the above is $n_7=2$ and $n_{21}=6$, each of $LL$ and $RR$ must contain  seven vertex-disjoint  3-cycle.
\end{proof}

For all  $\mathcal O,\mathcal O'\in\{LL,RR,LR,RL\}$, we define 
\[E(\mathcal O,\mathcal O')=\{uv\mid \mbox{$u\in \mathcal O, v\in \mathcal O'$, $u$ and  $v$ are adjacent}\}.\]  
For brevity, we  write $E(\mathcal O)$ rather than the more cumbersome $E(\mathcal O,\mathcal O)$.

\begin{coro}
The subgraph $\Delta$ of $\Gamma_2$ induced on  $E(LL,RR)$ is a disjoint union of cycles. 
\end{coro}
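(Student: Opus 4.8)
The plan is to observe that $\Delta$ is a $2$-regular graph and then to invoke the elementary fact that a finite $2$-regular graph is a disjoint union of cycles.

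First I would recall from Figure \ref{final OP21} (\cf Proposition \ref{unique OP21}) that the orbit valency between $LL$ and $RR$ satisfies $b_{12}=2$. By definition this means every vertex of $LL$ has exactly two neighbors lying in $RR$. Since the orbits $LL$ and $RR$ have the same size (namely $21$), orbit valencies are symmetric, so $b_{21}=b_{12}=2$ as well; hence every vertex of $RR$ likewise has exactly two neighbors in $LL$. Because $\Delta$ retains only the cross edges in $E(LL,RR)$ together with their endpoints, and $b_{12}=2>0$ guarantees that no vertex of $LL\cup RR$ is isolated in $\Delta$, the vertex set of $\Delta$ is precisely $LL\cup RR$ and every such vertex has degree exactly $2$ in $\Delta$.

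It then remains only to apply the standard observation that a finite graph in which every vertex has degree $2$ is a disjoint union of cycles: starting from any vertex and repeatedly following an as-yet-unused incident edge, one is forced (by finiteness and the degree constraint) to return to the start, and the components so produced are vertex-disjoint cycles. This completes the argument. I expect no genuine obstacle here, since the entire content of the corollary is the value $b_{12}=2$ furnished by Proposition \ref{unique OP21}, after which the conclusion is immediate. I would additionally remark that, because $\Delta$ is bipartite with parts $LL$ and $RR$, all of its cycles have even length, a refinement that is likely to be useful in the subsequent structural analysis.
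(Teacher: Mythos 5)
Your proposal is correct and matches the paper's own argument, which likewise reads off the valency $2$ between $LL$ and $RR$ from Figure \ref{final OP21} and concludes that $\Delta$, being a bipartite graph of valency $2$, is a disjoint union of cycles. Your added remark that the cycles are necessarily of even length is a harmless (and true) refinement, but otherwise the two proofs are the same.
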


\begin{proof}
This follows at once since $\Delta$ is a bipartite graph of valency 2 (\cf Figure 
\ref{final OP21}).   
\end{proof}
 
We next consider the induced graph $\Gamma_2[\mathcal E]$ where  
\[\mathcal E=\{uw\in E(\Gamma_2)\mid  \mbox{$uvw$ is a 3-cycle 
for some $v\in V(\Gamma_1)$}\}.\]

\begin{prop}\label{red pattern}
\hspace*{-6mm}
\begin{enumerate}

\item[\rm (a)]   $\Gamma_2[\mathcal E]$ is a 2-valent spanning subgraph of $\,\Gamma_2$. 

\item[\rm (b)] 
Every edge in $\mathcal E$ traverses two distinct orbits. Furthermore, $\mathcal E\cap E(LL,RR)=\emptyset$ and $\mathcal E\cap E(LR,RL)=\emptyset$.

\item[\rm (c)]
A cycle in $\Gamma_2[\mathcal E]$ must traverse orbits in the following repetitive order: 
\[LL, LR, RR, RL, LL,LR,RR,RL,\dots, LL, LR, RR, RL, LL.\]

\end{enumerate}
\end{prop}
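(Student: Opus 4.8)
The plan is to describe $\mathcal E$ explicitly through the two $\Gamma_1$-coordinates of each $\Gamma_2$-vertex and then read off (a)--(c) from the orbit valencies in Figure \ref{final OP21} together with Proposition \ref{interior 3-cycles}. For (a), I would fix $u=ijXY\in V(\Gamma_2)$ with coordinates $iX,jY\in V(\Gamma_1)$. Each of the two edges joining $u$ to a coordinate lies on a unique $3$-cycle since $\lambda=1$, and the first thing to check is that the third vertex of this $3$-cycle lies in $\Gamma_2$: it cannot be $x$ (as $u\not\sim x$) and it cannot be the remaining coordinate, because $iX$ and $jY$ are nonadjacent when $i\ne j$. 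This produces two edges of $\mathcal E$ at $u$; they are distinct, for otherwise that single edge would lie on two triangles with distinct apexes $iX\ne jY$, violating $\lambda=1$. Conversely any edge of $\mathcal E$ at $u$ has its apex among the coordinates of $u$, so these are the only two. Hence $\Gamma_2[\mathcal E]$ is $2$-valent and spanning.

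For (b) I would analyze the triangles sharing a fixed apex $v\in V(\Gamma_1)$. Taking $v=\ell L$, its twelve $\Gamma_2$-neighbors split as six vertices with both coordinates of type $L$ (lying in $LL$) and six vertices with one coordinate of each type (three in $LR$ and three in $RL$). The six triangles on $v$ pair these twelve neighbors into six edges of $\mathcal E$; by Proposition \ref{interior 3-cycles} no such edge is internal to $LL$, so counting forces a perfect matching between the six $LL$-neighbors and the six mixed neighbors. Thus every edge of $\mathcal E$ with an $L$-apex joins $LL$ to $LR\cup RL$, and symmetrically every edge with an $R$-apex joins $RR$ to $LR\cup RL$. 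Consequently the only edge types occurring in $\mathcal E$ are $LL$--$LR$, $LL$--$RL$, $RR$--$LR$, $RR$--$RL$; in particular every edge meets two distinct orbits and $\mathcal E\cap E(LL,RR)=\mathcal E\cap E(LR,RL)=\emptyset$, giving (b).

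Part (c) is where the real work lies. The previous paragraph shows $\Gamma_2[\mathcal E]$ is bipartite with parts $LL\cup RR$ and $LR\cup RL$, so the two classes alternate along every cycle. Since a mixed vertex has one coordinate of type $L$ and one of type $R$, its two $\mathcal E$-edges (with the corresponding apexes in $\Gamma_1$) lead to one vertex of $LL$ and one of $RR$; hence the pure vertices alternate $LL,RR$ around each cycle. It remains to show that each pure vertex has one neighbor in $LR$ and one in $RL$. Here I would exploit $G$-equivariance: the matchings at the apexes $\ell L$ are interchanged by $s$, so they are governed by a single bijection $\psi$ on the nonzero differences; $r$-equivariance then forces $\psi$ to respect the two cosets $\{1,2,4\}$ and $\{3,5,6\}$ of quadratic residues modulo $7$ (fixing or swapping them). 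Because the two apexes of a pure vertex see opposite differences $d$ and $-d$, and $-1$ is a quadratic non-residue modulo $7$, the two mixed neighbors land in different cosets, hence in different orbits. Alternation then forces the cyclic pattern $LL,LR,RR,RL,LL,\dots$, proving (c).

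The main obstacle is precisely the last claim in (c): everything else is bookkeeping against Figure \ref{final OP21} and Proposition \ref{interior 3-cycles}, but deducing that a pure vertex meets both mixed orbits requires reducing the unknown adjacencies to the single equivariant bijection $\psi$ and then invoking the arithmetic fact that $-1$ is a non-residue modulo $7$. I would take care to verify that the identification of $LR$ (resp.\ $RL$) with residue (resp.\ non-residue) differences is consistent with the orbit representatives $12LR$ and $12RL$, since the whole argument hinges on the residue/non-residue dichotomy being genuinely $G$-invariant.
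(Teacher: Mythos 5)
Your proof is correct, but it diverges from the paper's in parts (b) and (c). For (a) you argue locally (exactly one $\mathcal E$-edge per coordinate of each $\Gamma_2$-vertex), whereas the paper counts $|\mathcal E|=14\cdot 6=84$ and uses a degree-sum argument; these are essentially equivalent. For (b) your apex-matching argument is genuinely different and notably cleaner: observing that the six triangles at $\ell L$ induce a perfect matching on its twelve $\Gamma_2$-neighbors, and that Proposition \ref{interior 3-cycles} together with $\lambda=1$ forbids any matched pair inside $LL$, forces a bijection between the six $LL$-neighbors and the six mixed neighbors, which kills $E(LL,RR)$ and $E(LR,RL)$ simultaneously. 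The paper instead classifies the five $\Gamma_2$-triangles at a vertex of $LL$ by orbit type and counts $42+42=84$ edges between $LR$ and $RL$ to show all of them already lie on triangles with apex in $LL\cup RR$; your route avoids that bookkeeping. For (c) the split again: your observation that each mixed vertex automatically has one $\mathcal E$-neighbor in $LL$ and one in $RR$ (one apex of each type) matches the effect of the paper's first case, but for the remaining claim --- that a pure vertex meets both $LR$ and $RL$ --- the paper has a shorter argument than yours: if $v\in LL$ had both $\mathcal E$-neighbors $u,u^g\in LR$, then $u^g$ would have two $\mathcal E$-neighbors $v,v^g\in LL$ (with $v\ne v^g$ by regularity of the $G$-action), contradicting the case just settled. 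Your equivariant bijection $\psi$ on differences, with $\psi(2d)=2\psi(d)$ forcing $\psi$ to respect the residue/non-residue cosets and $-1\notin\{1,2,4\}$ forcing $\psi(d),\psi(-d)$ into different cosets, is valid and has the side benefit of making explicit the quadratic-residue structure distinguishing $LR$ from $RL$, but it is heavier machinery than the two-line transitivity reduction the paper uses.
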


\begin{proof} 
Recall that every vertex in $\Gamma$ lies on seven 3-cycles. As $iX\in V(\Gamma_1)$ lies on the 3-cycle with vertices  
$x$, $iX$, $iX^C$ where $\{X,X^C\}=\{L,R\}$, each of the remaining six 
3-cycles on $iX$ must have a unique edge in 
 $\mathcal E$. Ranging over all 14 choices for the vertex $iX$, we conclude that $|\mathcal E|= 14\cdot 6=84$.
 
Let  $ijXY$ be an arbitrary  vertex in $\Gamma_2$, and observe that $ijXY$ has $iX$ and $jY$ as its $\Gamma_1$-neighbors. This means the edge adjoining $ijXY$ to $iX$ must lie in a unique 3-cycle with third vertex of the form $i\ell XW$ for some   
 $W\in\{L,R\}$. Likewise,  the edge adjoining $ijXY$ to $jY$ must lie in a unique 3-cycle with third vertex of the form  $jmYU$ for some   
 $U\in\{L,R\}$. 
But then by definition, the edges adjoining $ijXY$ to $i\ell XW$ and $ijXY$ to $jmYU$  must lie in $\mathcal E$. This proves $\Gamma_2[\mathcal E]$ is a spanning subgraph of $\Gamma_2$ and moreover, all 84 vertices in 
$\Gamma_2[\mathcal E]$ must have valency at least 2. 
But the sum of all  
 valencies in $\Gamma_2[\mathcal E]$ must equal $2|\mathcal E|=168$ from which we conclude that $\Gamma_2[\mathcal E]$ is $2$-valent.  Thus (a) is proved.

To prove (b) we first observe that $\mathcal E\cap E(\mathcal O)=\emptyset$ for every orbit $\mathcal O\in \{LL,RR,LR,RL\}$.  Indeed, this follows for    
$\mathcal O\in \{LR,RL\}$ because these orbits are cocliques.  For  
$\mathcal O\in \{LL,RR\}$ the result follows from Proposition \ref{interior 3-cycles} which asserts that every edge in $LL$ or $RR$  already occurs as an edge of an internal  3-cycle.  It follows that every edge in $\mathcal E$ must traverse distinct orbits.  
The proof that $\mathcal E\cap E(LL,RR) =\emptyset$
 is obvious since no two vertices of the form $ijLL$ and $k\ell RR$ can have a common $\Gamma_1$-neighbor. To prove $\mathcal E\cap E(LR,RL)=\emptyset$ we focus on the five 3-cycles in $\Gamma_2$ that occur at a fixed vertex $u$ of $LL$.  Note that $u$ has two neighbors in $RR$ and four neighbors in each of $LR$ and $RL$.  As we have already shown, two of its eight neighbors in $LR\cup RL$ form edges in $\Gamma_2(\mathcal E)$ so cannot lead to 3-cycles at $u$.  As $u$ lies on a unique internal 3-cycle, it is clear that the remaining eight edges incident to $u$ (two into $RR$ and six into $LR\cup RL$) must form edges of the four non-internal 3-cycles at $u$.  
 
However, the two edges into $RR$ cannot form a 3-cycle at $u$ because every edge in $RR$ already occurs in a 3-cycle internal to the orbit $RR$.  
The only remaining possibility is that these four 3-cycles at $u$ are of the form
$(LL,RR,LR)$, $(LL,RR,RL)$, and $(LL,LR,RL)$ (twice).  It is the latter case that holds significance for us.  This is because by transitivity on $LL$, we obtain 42 of the 
84 edges that traverse the orbits $LR$ and $RL$.  Likewise, by replacing $LL$ with $RR$ in the above argument we obtain 42 additional such  edges.    Thus all 84 edges traversing $LR$ and $RL$ are accounted for as sharing a common neighbor in either $LL$ or $RR$, which implies $\mathcal E\cap E(LR,RL)=\emptyset$. This completes  the proof of (b).   

From above, proving (c) is tantamount to showing there are no 2-paths $uvw$ in 
$\Gamma_2[\mathcal E]$ with $u,w\in \mathcal O$ and $v\in \mathcal O'$, where 
$\{\mathcal O,\mathcal O'\}$ is one of  
$\{LL,LR\}$, $\{LL,RL\}$, $\{RR,LR\}$, $\{RR,RL\}$. We illustrate this for
$\{\mathcal O,\mathcal O'\}=\{LL,LR\}$ since the other cases yield to a symmetric argument. 

Suppose first that $\mathcal O=LL$ and $\mathcal O'=LR$, so $u,w\in LL$ and $v\in LR$. By transitivity of $G$ on $LR$, we may assume $v=12LR$. The only possible 
$\Gamma_2[\mathcal E]$-neighbors of $v$ are therefore $u=1jLL$ and $w=1kLL$.  
But in this case $uvw$ is a 3-cycle, which violates $\lambda=1$ since each of $u,v,w$ is adjacent to $1L\in \Gamma_1$. 
Next suppose $\mathcal O=LR$ and $\mathcal O'=LL$ so that $u,w\in LR$ and $v\in LL$.  By transitivity of $G$ on $LR$, there exists $g\in G$ such that $w=u^g$.  But then $w$ is adjacent to $v, v^g\in LL$, thereby reducing this case to the one previously treated.  With this,   the proof of the proposition is complete.
\end{proof}

\begin{rema}
We may now describe the eight orbits of  $3$-cycles in $\Gamma_2$ by type.  Specifically, they are $(LL,LL,LL)$ (one orbit of size $7$),
$(RR,RR,RR)$ (one orbit of size $7$), $(LL,RR,LR)$ (one orbit of size $21$), $(LL,RR,RL)$ (one orbit of size $21$), $(LL, LR, RL)$ (two orbits of size $21$ each), $(RR,LR,RL)$ (two orbits of size $21$ each).
\end{rema}

\begin{coro}\label{red cycles}
The graph $\Gamma_2[\mathcal E]$ is a disjoint union of $\,\frac{84}{k}$ cycles 
of identical length $k$ 
for some $k\in \{4,12,28\}$. 
\end{coro}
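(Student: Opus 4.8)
The plan is to exploit the action of $G\cong Frob(21)$ on the cycle decomposition of $\Gamma_2[\mathcal E]$, using the regular action of $G$ on the orbit $LL$ as the main lever. By Proposition \ref{red pattern}(a) the graph $\Gamma_2[\mathcal E]$ is $2$-valent, hence a disjoint union of cycles, and by Proposition \ref{red pattern}(c) each such cycle runs through the four orbits in the fixed cyclic pattern $LL,LR,RR,RL$. Consequently every cycle $C$ has length $4m$ for some $m\ge 1$ and meets each of the four orbits in exactly $m$ vertices; in particular $m=|C\cap LL|$. Since the cycles partition the $84$ vertices of $\Gamma_2$, this already shows every component length is a multiple of $4$.

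Next I would bring in the group. Because $G$ fixes $x$, preserves $V(\Gamma_1)$ and $V(\Gamma_2)$, and sends $3$-cycles to $3$-cycles, it preserves the edge set $\mathcal E$; hence $G$ acts on $\Gamma_2[\mathcal E]$ by graph automorphisms and so permutes its cycles. By Lemma \ref{1, 7^2, 21^4}, $LL$ is a $G$-orbit of size $21=|G|$, so $G$ acts \emph{regularly}, in particular freely, on $LL$. Fix a cycle $C$ and let $G_C$ be its setwise stabilizer. Then $G_C$ acts freely on the set $C\cap LL$ of size $m$, so $|G_C|$ divides $m$. On the other hand the $[G:G_C]$ cycles in the $G$-orbit of $C$ are pairwise vertex-disjoint and each meets $LL$ in $m$ vertices, whence $[G:G_C]\cdot m\le |LL|=21=|G|$; this forces $m\le |G_C|$. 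Combining the two inequalities gives $m=|G_C|$, and therefore the $G$-orbit of $C$ accounts for all $21$ vertices of $LL$.

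The uniqueness and common length now follow almost for free. Since every $G$-orbit of cycles exhausts $LL$, and cycles lying in distinct $G$-orbits are vertex-disjoint, there can be only one $G$-orbit of cycles. Hence all cycles are $G$-conjugate, so they share a common length $k=4m$, and their number is $84/k$. As $m=|G_C|$ divides $|G|=21$, we have $m\in\{1,3,7,21\}$ and thus $k\in\{4,12,28,84\}$.

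It remains to rule out $k=84$, which I regard as the one genuinely delicate point. If $k=84$ then $\Gamma_2[\mathcal E]$ is a single $84$-cycle $C$, necessarily stabilized by all of $G$, so $G=G_C$. Since $G$ acts freely on $LL\subseteq V(C)$, no nontrivial element of $G$ can fix $C$ pointwise, so $G$ embeds in $\mathrm{Aut}(C)\cong D_{168}$. But a subgroup of odd order in a dihedral group contains no reflection and therefore lies inside the cyclic rotation subgroup $\mathbb Z_{84}$; this would make $G$ cyclic, contradicting $G\cong Frob(21)$. Hence $k\ne 84$ and $k\in\{4,12,28\}$, as claimed. The crux of the whole argument is the two-sided count forcing $m=|G_C|$; once that identity is in hand, both the uniqueness of the $G$-orbit and the final divisibility constraint drop out, leaving only the dihedral-embedding observation to finish.
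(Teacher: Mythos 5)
Your proof is correct, and while it rests on the same two pillars as the paper's --- Proposition \ref{red pattern} forcing each cycle to have length $4m$, and the regular action of $G\cong Frob(21)$ on $LL$ --- the way you extract the possible values of $m$ is genuinely different. The paper argues concretely: it takes the $4$-path $uvwzu'$ with $u,u'\in LL$, picks the (unique) $g\in G$ with $u^g=u'$, and observes that iterating $g$ traces out the whole cycle, so $k=4|g|$ with $|g|\in\{1,3,7\}$; since $Frob(21)$ has no element of order $21$, the value $k=84$ never even appears. You instead run an orbit--stabilizer count: freeness on $C\cap LL$ gives $|G_C|\mid m$, disjointness of the conjugates of $C$ gives $m\le|G_C|$, hence $m=|G_C|$, a single $G$-orbit of cycles, and $m\in\{1,3,7,21\}$. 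Because $m$ now ranges over orders of \emph{subgroups} rather than \emph{elements}, you pay for the extra abstraction by having to exclude $m=21$ by hand, which you do correctly via the embedding of $G$ into the dihedral automorphism group of an $84$-cycle (an odd-order subgroup of a dihedral group is cyclic). What your version buys in return is that it silently handles a point the paper leaves implicit, namely that the chosen $g$ really advances the cycle by four steps rather than reversing it (in the paper this also follows from freeness of the action on $RR$, but it is not spelled out). Your argument also delivers, as a byproduct, that all cycles form a single $G$-orbit and that $G_C$ is the full stabilizer of order $m$ --- slightly more structural information than the statement requires.
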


\begin{proof}
By Proposition \ref{red pattern}(a), $\Gamma_2[\mathcal E]$ is 2-valent and hence a union of cycles. Moreover, all such cycles must have the same size $k$ by transitivity of $G$ on orbits of vertices.  By Proposition \ref{red pattern}(c),  the edges in such cycles must traverse the orbits 
$LL,LR,RR,RL$ in repetition, which means $k$ is a multiple of 4. One possibility is that the second time the cycle enters the orbit $LL$ it terminates at the initial vertex.
In this case there are 21 $4$-cycles in the graph $\Gamma_2[\mathcal E]$. 
Otherwise, the cycle fails to close after one iteration resulting in a 4-path $uvwzu'$  where $u,u'\in LL$.  But then by transitivity of $G$ on the vertices of $LL$, there exists an element  $g\in G$ for which $u'=u^g$.    Obviously, $|g|=3$ or $7$.  If $|g|=3$ we get seven $12$-cycles, namely the seven images of the 12-cycle 
$uvwzu^gv^gw^gz^gu^{g^2}v^{g^2}w^{g^2}z^{g^2}u$ under the action of $\langle s\rangle$.  A similar result holds for 
$|g|=7$ in which case we obtain three $28$-cycles.  As these are the only possibilities, the proof is complete.
\end{proof}

\begin{rema}
It is currently unclear to us if a Conway $9$-graph $QR(9)$ (aka Paley graph of order $9$)  
exists inside a putative Conway $99$-graph $\Gamma$.  But if this be the case then   $\Gamma_2[\mathcal E]$ would consist of $21$ vertex-disjoint $4$-cycles (\cf Corollary \ref{red cycles}). Moreover, by the transitive action of $G$ on each of its two $\Gamma_1$-orbits, $\Gamma$ would contain $21$ embedded copies of $QR(9)$. 
\end{rema}

\section{Divisibility by $7$ implies $G\cong \mathbb Z_7$}\label{computer}
	
Up to this point, we have shown that if $|G|$ is divisible by 7 then 
there are just  two possibilities for the  isomorphism type of $G$, namely $\mathbb Z_7$ and $Frob(21)$. In Section \ref{cyclic or Frobenius} we determined the unique orbit partition of $\Gamma$ under the assumption that $G\cong Frob(21)$, while in Section \ref{several results}  we derived a fairly comprehensive structural framework for $\Gamma$ under this same assumption. 
The framework obtained 
is essential in reducing run-time,  thereby making a computer search feasible. 
Below we give details of our computer-generated proof that $G$ is isomorphic to 
$\mathbb Z_7$ provided $|G|$ is divisible by 7.

The program was written and implemented in GAP  Version 4.12.2 \cite{GAP}.  Specifically, we utilized the GRAPE package Version 4.9.0 \cite{GRAPE}. 
As the computer code is prohibitively large we cannot reproduce it here, however it is available upon request.  

Below we sketch the key steps of the program.\\[2mm]
\noindent{\bf Step 1:}
Initialize the vertex set, then create edges common to all cases of the search, namely those incident to each vertex in $\Gamma_1$. These account for all edges from $x$ to its $\Gamma_1$-neighbors, all edges internal to $\Gamma_1$, and all edges that traverse $\Gamma_1$ to $\Gamma_2$.\\[1mm]
\noindent{\bf Step 2:}
Form the seven 3-cycles internal to $LL$ by applying the AddEdgeOrbit function to a chosen internal edge incident to $12LL$.\\[1mm]
\noindent{\bf Step 3:}
Repeat step 2 to generate the seven 3-cycles in the orbit $RR$.\\[1mm] 
\noindent{\bf Step 4:}
Choose   
six possible neighbors of $12LL$ in the orbit $LR$, followed by three possible choices for such neighbors in $RL$.  Then apply the AddEdgeOrbit function to obtain half of the edges of the induced graph $\Gamma_2[\mathcal E]$.\\[1mm]
\noindent{\bf Step 5:}
Repeat step 4 at the vertex $12RR$, thus determining the entire  graph $\Gamma_2[\mathcal E]$.\\[1mm] 
\noindent{\bf Step 6:}
Choose the eight remaining neighbors of $12LL$. (This consists of three neighbors in $LR$, three in $RL$ and two in $RR$.) Apply the AddEdgeOrbit function to determine all edges incident to $v$ for every vertex in $v\in LL$. \\[1mm]
\noindent{\bf Step 7:}
Repeat step 6  for the six remaining neighbors of $12RR$. (This consists of three neighbors in $LR$ and three in $RL$.) This determine all edges incident to $v$ for every vertex in $v\in RR$. \\[1mm]
\noindent{\bf Step 8:}
Complete the graph by  forming all edges traversing the orbits $LR$ and $RL$.  This is accomplished by choosing four possible  neighbors of $12LR$ and applying the AddEdgeOrbit function. The graph is now ready for testing.\\[1mm]
\noindent{\bf Step 9:}  
Compare $[[ 0, 0, 14 ], [ 1, 1,12 ], [ 2, 12, 0 ]]$ to the output yielded by  the GlobalParameters function.

The program ran through $2,916$ iterations. Each iteration was comprised of multiple cases ranging between the hundreds and thousands. We were able to avoid cases ranging well into the millions  by continually exploiting the DoubleCosetRepsAndSizes function. This function was used when it was theoretically evident that every
 element in the double coset $HgK$ would produce the same computational result as $g$, so need not be tested. This occurred at steps $6,7$ and $8$ where $H$ and $K$ were suitably chosen ``coordinate'' stabilizers in the symmetric groups $S_{10}$, $S_6$ and $S_4$, respectively.

\bibliographystyle{amsplain-ac}
\bibliography{myBib}
\end{document}